%

\documentclass[aap,MSNbibl,dvips]{arximspdf}

%

\doi{10.1214/12-AAP905} 
\volume{23}
\issue{6}
\pubyear{2013}
\firstpage{2382}
\lastpage{2419}

\makeatletter

\newcommand{\cal}{\mathcal}

\newcommand{\pr}{\mathbb{P}}
\newcommand{\E}{\mathbb{E}}

\newcommand{\reals}{{\mathbb R}}

\newtheorem{theorem}{Theorem}
\newtheorem{lemma}{Lemma}
\newtheorem{proposition}{Proposition}
\newtheorem{corollary}{Corollary}

\makeatother

\begin{document}
\begin{frontmatter}

\title{Steady-state $\mathit{GI}/\mathit{GI}/\mathbf{\mathit{\lowercase{n}}}$ queue in the Halfin--Whitt regime}
\runtitle{Steady-state $\mathit{GI}/\mathit{GI}/\lowercase{n}$ queue in the Halfin--Whitt regime}

\begin{aug}
\author[A]{\fnms{David} \snm{Gamarnik}\thanksref{t1}\ead[label=e1]{gamarnik@mit.edu}}
\and
\author[B]{\fnms{David A.} \snm{Goldberg}\corref{}\ead[label=e2]{dgoldberg9@isye.gatech.edu}}
\runauthor{D. Gamarnik and D. A. Goldberg}
\affiliation{MIT and Georgia Institute of Technology}
\address[A]{Operations Research Center\\
\quad and Sloan School of Management\\
MIT\\
Cambridge, Massachusetts 02139\\
USA\\
\printead{e1}} 
\address[B]{Georgia Institute of Technology\\
Atlanta, Georgia 30332\\
USA\\
\printead{e2}}
\end{aug}

\thankstext{t1}{Supported by NSF Grant CMMI-0726733.}

\received{\smonth{3} \syear{2011}}
\revised{\smonth{9} \syear{2012}}

%
\begin{abstract}
We consider the FCFS $\mathit{GI}/\mathit{GI}/n$ queue in the so-called Halfin--Whitt
heavy traffic regime. We prove that under minor technical conditions
the associated sequence of steady-state queue length distributions,
normalized by $n^{{1/2}}$, is tight. We derive an upper bound on
the large deviation exponent of the limiting steady-state queue length
matching that conjectured by Gamarnik and Momcilovic
[\textit{Adv. in Appl. Probab.} \textbf{40} (2008) 548--577].
We also prove a matching lower bound when the arrival process is
Poisson.

Our main proof technique is the derivation of new and simple bounds for
the FCFS $\mathit{GI}/\mathit{GI}/n$ queue. Our bounds are of a structural nature, hold
for all $n$ and all times $t \geq0$, and have intuitive closed-form
representations as the suprema of certain natural processes which
converge weakly to Gaussian processes. We further illustrate the
utility of this methodology by deriving the first nontrivial bounds
for the weak limit process studied in [\textit{Ann. Appl. Probab.}
\textbf{19} (2009) 2211--2269].
\end{abstract}

%
\begin{keyword}[class=AMS]
\kwd{60K25}
\end{keyword}
\begin{keyword}
\kwd{Many-server queues}
\kwd{large deviations}
\kwd{weak convergence}
\kwd{Gaussian process}
\kwd{stochastic comparison}
\end{keyword}

\end{frontmatter}

\section{Introduction}\label{SectionIntroduction}
Parallel server queueing systems can operate in a variety of regimes
that balance between efficiency
and quality of offered service. This is captured by the so-called
Halfin--Whitt (H--W) heavy traffic regime, which can be described as
critical with respect to
the probability that an arriving customer has to wait for service.
Namely, in this regime the stationary probability of wait is bounded
away from both 0 and 1, as the number of servers grows.
Although studied originally by Erlang \cite{E48} and Jagerman \cite
{J74}, the regime was formally introduced by Halfin and Whitt \cite
{HW81}, who
studied the $\mathit{GI}/M/n$ system (for large $n$) when the traffic intensity
scales like $1 - Bn^{-{1/2}}$ for some strictly positive $B$.
Namely, the parameter $B$ controls how close to overloaded the system
is in heavy traffic.
They proved that under minor technical assumptions on the inter-arrival
distribution, this sequence of $\mathit{GI}/M/n$ queueing models has the
following properties:
\begin{longlist}
\item the steady-state probability that an arriving job finds all
servers busy (i.e., the probability of wait) has a nontrivial limit;
\item the sequence of queueing processes, normalized by $n^{{1/2}}$,
converges weakly to a nontrivial positive recurrent diffusion;
\item the sequence of steady-state queue length distributions,
normalized by $n^{{1/2}}$, is tight and converges
distributionally to the mixture of a point mass at $0$ and an
exponential distribution.
\end{longlist}
Furthermore, this steady-state probability of wait can be parametrized
as a function of $B$, with larger values of $B$ corresponding to
smaller probabilities of wait.
Similar weak convergence results under the H--W scaling were
subsequently obtained for more general multi-server systems
\cite{PR00b,JMM04,MM08,GM08,GS11a,R09} with the most general (single-class) results
appearing in \cite{R09} (and follow-up papers \cite{R07b,PR10}).
As the theory of weak convergence generally relies heavily on the
assumption of compact time intervals, the most general of these results
hold only in the transient regime.
Indeed, with the exception of \cite{HW81} (which treats exponential
processing times), \cite{JMM04} (which treats deterministic
processing times), \cite{GM08} (which treats processing times with
finite support)
and \cite{GS11a} (which treats phase-type processing times and allows
for abandonments and multi-class structure), all of the aforementioned
results are for the associated sequence of normalized \textit{transient}
queue length distributions only, leaving many open questions about the
associated \textit{steady-state} queue length distributions.

In particular, in \cite{GM08} it is shown for the case of
processing times with finite support that the sequence of steady-state
queue length distributions (normalized by $n^{{1/2}}$) is tight,
and has a limit whose tail decays exponentially fast. The authors
further prove that this exponential rate of decay (i.e., large
deviation exponent) is $- 2B(c^2_A + c^2_S)^{-1}$, where $B$ is the
spare capacity parameter, and $c^2_A,c^2_S$ are the squared
coefficients of variation of the inter-arrival and processing time
distributions. In \cite{GM08} it was conjectured that this result
should hold for more general processing time distributions. However,
prior to this work no further progress on this question has been achieved.

In this paper we resolve the conjectures made in \cite
{GM08} with regards to (w.r.t.) tightness of the steady-state queue
length, and take a large step toward resolving the conjectures made
w.r.t. the large deviation exponent. We prove that as long as the
inter-arrival and processing time distributions satisfy minor technical
conditions (e.g., finite $2 + \varepsilon$ moments), the associated
sequence of steady-state queue length distributions, normalized by
$n^{{1/2}}$, is tight.
Under the same minor technical conditions we derive an upper bound on
the large deviation exponent of the limiting steady-state queue length
matching that conjectured by Gamarnik and Momcilovic in \cite{GM08}.
We also prove a matching lower bound when the arrival process is Poisson.

Our main proof technique is the derivation of new and simple
bounds for the FCFS $\mathit{GI}/\mathit{GI}/n$ queue. Our bounds are of a structural
nature, hold for all $n$ and all times $t \geq0$, and have intuitive,
closed-form representations as the suprema of certain natural processes
which converge weakly to Gaussian processes.
Our upper and lower bounds also exhibit a certain duality relationship
and exemplify a general methodology which may be useful for analyzing a
variety of queueing systems. We further illustrate the utility of this
methodology by deriving the first nontrivial bounds for the weak limit
process studied in \cite{R09}.

We note that our techniques allow us to analyze many
properties of the $\mathit{GI}/\mathit{GI}/n$ queue in the H--W regime without having to
consider the complicated exact dynamics of the $\mathit{GI}/\mathit{GI}/n$ queue.
Interestingly, such ideas were used in the original paper of Halfin and
Whitt \cite{HW81} to show tightness of the steady-state queue length
for the $\mathit{GI}/M/n$ queue under the H--W scaling, but do not seem to have
been used in subsequent works on queues in the H--W regime.

The rest of the paper proceeds as follows. In Section \ref
{mainsec}, we present our main results. In Section \ref{uppersec}, we
establish our general-purpose upper bounds for the queue length in a
properly initialized FCFS $\mathit{GI}/\mathit{GI}/n$ queue. In Section~\ref{lowersec},
we establish our general-purpose lower bounds for the queue length in a
properly initialized FCFS $M/\mathit{GI}/n$ queue. In Section \ref{tightsecc}
we use our bounds to prove the tightness of the steady-state queue length
when the system is in the H--W regime. In Section~\ref{ldsecc} we
combine our bounds with known results about weak limits and the suprema
of Gaussian processes to prove our large deviation results. In Section
\ref{appssec} we use our bounds to study the weak limit derived in
\cite{R09}. In Section \ref{concsec} we summarize our main results
and comment on directions for future research. We include a technical
\hyperref[app]{Appendix}.

\section{Main results}\label{mainsec}
We consider the first-come-first-serve (FCFS) $\mathit{GI}/\mathit{GI}/n$ queueing model,
in which inter-arrival times are independent and identically
distributed (i.i.d.) random variables (r.v.s), and processing times are
i.i.d. r.v.s.

Let $A$ and $S$ denote some fixed r.v.s with nonnegative
support such that (s.t.) $\E[A] = \mu_A^{-1} < \infty, \E[S] = \mu
_S^{-1} < \infty$ and $\pr(A = 0) = \pr(S = 0) = 0$. Let $\sigma
^2_A$ and $\sigma^2_S$ denote the variance of $A$ and $S$,
respectively. Let $c^2_A$ and $c^2_S$ denote the squared coefficient of
variation (s.c.v.) of $A$ and $S$, respectively.

We fix some excess parameter $B > 0$, and let $\lambda_n
\stackrel{\Delta}{=} n - B n^{{1/2}}$.
For $n$ sufficiently large to ensure $\lambda_n > 0$ (which is assumed
throughout), let $Q^n(t)$ denote the number in system (number in
service${}+{}$number waiting in queue) at time $t$ in the FCFS $\mathit{GI}/\mathit{GI}/n$
queue with inter-arrival times drawn i.i.d. distributed as $A\lambda
_n^{-1}$ and processing times drawn i.i.d. distributed as $S$ (initial
conditions will be specified later), independently from the arrival
process. Note that this scaling is analogous to that studied by Halfin
and Whitt in \cite{HW81}, as the traffic intensity in the $n$th
system is $1 - B n^{-{1/2}}$ in both settings. All processes
should be assumed right-continuous with left limits (r.c.l.l.) unless
stated otherwise. All empty summations should be evaluated as zero, and
all empty products should be evaluated as one.

\subsection{Main results}
Our main results will require two additional sets of assumptions on $A$
and $S$. The first set of assumptions, which we call the H--W
assumptions, ensures that $\lbrace Q^n(t), n \geq1 \rbrace$ is in the
H--W scaling regime as $n \rightarrow\infty$. We say that $A$ and $S$
satisfy the H--W assumptions if and only if $\mu_A = \mu_S$, in which
case we denote this common rate by $\mu$.
The second set of assumptions, which we call the $T_0$ assumptions, is
a set of additional technical conditions we require for our main results.
\renewcommand\thelonglist{(\roman{longlist})}
\renewcommand\labellonglist{\thelonglist}
\begin{longlist}
\item\label{t1} There exists $\varepsilon> 0$ s.t. $\E[A^{2+\varepsilon}], \E
[S^{2+\varepsilon}] < \infty$.
\item\label{t2} $c^2_A + c^2_S > 0$; namely, either $A$ or $S$ is a nontrivial
r.v.
\item\label{t3} $\limsup_{t \downarrow0} t^{-1} \pr( S \leq t ) < \infty
$.
\item\label{t4} For all sufficiently large $n$ and all initial conditions,
$Q^n(t)$ converges weakly to a stationary measure $Q^n(\infty)$ as $t
\rightarrow\infty$, independent of initial conditions.
\end{longlist}
We now briefly discuss the various assumptions, commenting on both the
reason for their inclusion and their restrictiveness. Condition
\ref{t1} is necessary for several bounds from the literature relating to
suprema of random walks; see \cite{Stight99}. Although we use this
condition to prove tightness of the queue length in the H--W regime, all
our intermediate results about weak limits and Gassian processes would
also hold under only a second moment assumption (as opposed to $2 +
\varepsilon$).

Condition \ref{t3} is necessary for several results from
the literature relating to the weak convergence of scaled renewal
processes; see \cite{Whitt85,W02}.
The condition is (e.g.) satisfied by any discrete distribution with no
mass at zero, any continuous distribution with finite density at zero,
and (more generally) any distribution function (d.f.) which is
absolutely continuous in a neighborhood of zero (see the discussion in
\cite{W02}). All our results other than those pertaining to the weak
convergence of scaled renewal processes and/or the large deviation
exponent of the queue length in the H--W regime would also hold without
this assumption.

Condition \ref{t4} is needed to sensibly discuss the
relevant stationary measures. We refer the interested reader to \cite
{A03} for an excellent discussion of sufficient conditions on $A$ and
$S$ which ensure that \ref{t4} holds, for example, if the d.f. of $A$
is continuous, or more generally has a ``spread-out component''; see
\cite{A03} for details. We note that our nonasymptotic transient
bounds hold even without this condition.

We now state our main results. We begin by establishing the
tightness of the steady-state queue length for the FCFS $\mathit{GI}/\mathit{GI}/n$ queue
in the H--W regime.
%
\begin{theorem}\label{tightc}
If $A$ and $S$ satisfy the H--W and $T_0$ assumptions, then the sequence
$\lbrace( Q^n(\infty) - n )^+n^{-{1/2}}, n \geq1
\rbrace$ is tight.
\end{theorem}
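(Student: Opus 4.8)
The plan is to deduce the theorem from the general-purpose upper bound of Section~\ref{uppersec} combined with a weak-convergence and tail analysis of the bounding process. Recall that Section~\ref{uppersec} provides, for an appropriately initialized FCFS $GI/GI/n$ queue, an explicit stochastic upper bound on $\big(Q^n(t) - n\big)^+$ valid simultaneously for all $n$ and all $t \geq 0$, of the form ``the supremum over $s \leq t$ of a centered renewal-difference process with strictly negative drift.'' Concretely, the bounding process compares the renewal arrival process (of rate $\lambda_n \mu$) against the departure process of $n$ permanently busy servers (a superposition of $n$ i.i.d.\ renewal processes of aggregate rate $n\mu$), so its drift over an interval of length $\tau$ is $(\lambda_n - n)\mu\, \tau = -B n^{\frac12}\mu\,\tau$, i.e.\ negative of order $n^{\frac12}$ uniformly in $n$.

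First I would reduce the steady-state statement to this transient bound. Using the existence of the steady state (condition~(\ref{t4})) together with the monotonicity of the FCFS $GI/GI/n$ queue in its initial condition (equivalently, by passing to the $t \to \infty$ limit along the stochastic-dominance inequality of Section~\ref{uppersec}), one gets that $\big(Q^n(\infty) - n\big)^+$ is stochastically dominated by the $t \to \infty$ limiting distribution of the bounding process. Since the increments of that process are built from sums of i.i.d.\ blocks with strictly negative mean, this all-time supremum $U^n \stackrel{\Delta}{=} \sup_{s \geq 0}(\cdots)$ is a proper random variable, and it remains only to show that $\lbrace n^{-\frac12}U^n, n \geq 1\rbrace$ is tight.

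For that I would invoke the diffusion scaling: rescale space by $n^{-\frac12}$ and leave time at its natural $O(1)$ scale (an $O(n^{\frac12})$ negative drift is balanced against the $O(n^{\frac12})$ fluctuations accumulated over time $O(1)$ by a superposition of $n$ renewal processes). By the functional central limit theorem for renewal processes and for superpositions of renewal processes --- here using the $2+\epsilon$ moment assumption~(\ref{t1}), the non-degeneracy assumption~(\ref{t2}), and condition~(\ref{t3}) to rule out pathological clustering of departures near $0$ (the analogue of Whitt's condition in \cite{Whitt.85}) --- the rescaled bounding process converges weakly on each compact time interval to a Brownian motion with strictly negative drift $-B\mu$ and finite variance proportional to $c^2_A + c^2_S$. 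Hence for each fixed horizon $T$, $n^{-\frac12}\sup_{0 \leq s \leq T}(\cdots)$ converges weakly to the supremum of this Brownian motion over $[0,T]$, which is tight.

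The main obstacle --- and the core of the argument --- is upgrading this compact-interval control to the all-time supremum uniformly in $n$, since weak convergence on compacts says nothing about $\sup_{s \geq T}$. Here I would exploit that the drift of the rescaled bounding process is negative and bounded away from $0$ uniformly in $n$: partitioning the time axis into unit blocks, a maximal inequality (Kolmogorov's, or Doob applied to a suitable supermartingale) combined with uniform-in-$n$ moment bounds on the rescaled block increments --- again a consequence of assumption~(\ref{t1}) and standard renewal estimates --- yields a bound on $\pr\big(n^{-\frac12}\sup_{s \in [k,k+1]}(\cdots) > x\big)$ that decays geometrically in $k$ and polynomially (or exponentially) in $x$, uniformly over large $n$. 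Summing over $k \geq T$ controls the tail of $n^{-\frac12}U^n$ uniformly in $n$, which together with the compact-interval tightness above gives tightness of $\lbrace n^{-\frac12}U^n, n \geq 1\rbrace$, and therefore of $\lbrace \big(Q^n(\infty) - n\big)^+ n^{-\frac12}, n \geq 1\rbrace$.
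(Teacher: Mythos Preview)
Your overall strategy---reduce via the Section~\ref{uppersec} upper bound to tightness of $\big\lbrace n^{-\frac12}\sup_{t\geq 0}\big(A_n(t)-\sum_{i=1}^n N_i(t)\big)\big\rbrace$, control the supremum on $[0,T]$ by FCLT, and control $[T,\infty)$ by a blocking/maximal-inequality argument exploiting the negative drift---is sound and is essentially the content of the paper's Lemma~\ref{largeT1} combined with Corollary~\ref{renewalfclt4}. The paper, however, takes a different route for the all-time supremum: instead of a direct block decomposition, it recasts $\sup_{t\geq 0}(\cdots)$ as the steady-state waiting time of a $G/D/1$ queue with \emph{stationary} inter-arrival times (the ``departures between consecutive arrivals''), and then invokes Szczotka's heavy-traffic tightness theorem (Theorem~\ref{statwait2}), verifying its hypotheses via moment bounds for pooled equilibrium renewal processes (Lemma~\ref{binomial2}) and the Billingsley maximal inequality (Lemma~\ref{billingsleylemma}). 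Your direct approach is more self-contained; the paper's approach imports an off-the-shelf result but makes the structure (a single-server heavy-traffic problem hidden inside the many-server one) explicit.

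Three technical points in your sketch need correcting. First, the compact-interval weak limit is \emph{not} a Brownian motion with drift: the superposition $n^{-\frac12}\big(\sum_i N_i(t) - n\mu t\big)$ converges (Theorem~\ref{renewalfclt2}) to the Gaussian process $\mathcal D(t)$ with covariance $\E[(N_1(s)-\mu s)(N_1(t)-\mu t)]$, which is not Brownian unless $S$ is exponential; the limit is $\mathcal A(t) - \mathcal D(t) - B\mu t$. This does not affect tightness but does matter elsewhere in the paper. Second, under only a $(2+\epsilon)$-moment assumption you will not get geometric decay in $k$: Markov's inequality plus the moment bound $\E|n^{-\frac12}(X_n(k)+B\mu n^{\frac12}k)|^{2+\epsilon}\leq C k^{1+\epsilon/2}$ (this is exactly Lemma~\ref{binomial2}) gives $\pr(\,\cdot\,)\leq C'k^{-(1+\epsilon/2)}$, which is summable but only polynomially small. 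Third, Kolmogorov's maximal inequality does not apply, since the increments of $A_n$ and of $\sum_i N_i$ are stationary but not independent; you need a maximal inequality of Billingsley type (Lemma~\ref{billingsleylemma}), which requires only increment tail bounds---precisely what the paper invokes to pass from its assumption~(i) to assumption~(ii) of Theorem~\ref{statwait2}. With these three fixes your argument goes through.
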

In words, the queue length $ ( Q^n(\infty) - n )^+$ scales
like $O(n^{{1/2}})$. Although we conjecture that the sequence
$\lbrace( Q^n(\infty) - n )^+n^{-{1/2}}, n \geq1
\rbrace$ has a unique weak limit (and thus converges weakly), our
approach, which proves the weak convergence of certain bounding
processes for the $\mathit{GI}/\mathit{GI}/n$ queue (but not the $\mathit{GI}/\mathit{GI}/n$ queue itself)
is unable to establish this, and we leave the question of uniqueness as
an interesting open problem.

We now establish an upper bound for the large deviation exponent of
the limiting steady-state queue length for the FCFS $\mathit{GI}/\mathit{GI}/n$ queue in
the H--W regime, and a matching lower bound when the arrival process is Poisson.

\begin{theorem}\label{ld1}
Under the same assumptions as Theorem \ref{tightc},
\[
\limsup_{x \rightarrow\infty} x^{-1} \log\Bigl( \limsup
_{n
\rightarrow\infty} \pr\bigl( \bigl( Q^n(\infty) - n
\bigr)^+n^{-{1/2}} > x \bigr) \Bigr) \leq-2 B \bigl(c^2_A
+ c^2_S\bigr)^{-1}.
\]
If in addition $A$ is an exponentially distributed r.v., namely the
system is $M/\mathit{GI}/n$, then
\begin{eqnarray*}
&& \lim_{x \rightarrow\infty} x^{-1} \log\Bigl( \liminf
_{n
\rightarrow\infty} \pr\bigl( \bigl( Q^n(\infty) - n
\bigr)^+n^{-{1/2}} > x \bigr) \Bigr)
\\
&&\qquad= \lim_{x \rightarrow\infty} x^{-1} \log\Bigl( \limsup
_{n
\rightarrow\infty} \pr\bigl( \bigl( Q^n(\infty) - n
\bigr)^+n^{-{1/2}} > x \bigr) \Bigr) = -2 B \bigl(c^2_A
+ c^2_S\bigr)^{-1}.
\end{eqnarray*}
\end{theorem}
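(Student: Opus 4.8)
The plan is to deduce both theorems from the structural upper and lower bounds for the FCFS $GI/GI/n$ queue that are to be established in Sections~\ref{uppersec} and \ref{lowersec}, combined with standard functional central limit theorems and classical estimates for the supremum of a Gaussian process. The common theme is that we avoid the exact many-server dynamics entirely: we sandwich $(Q^n(\infty)-n)^+$ between the steady-state values of two auxiliary one-dimensional processes whose $n^{-1/2}$-scaled versions converge weakly, on compact time intervals, to explicit Gaussian processes (reflected, in the upper-bound case, at an appropriate barrier). For the upper bound on the large deviation exponent, I would first use the Section~\ref{uppersec} bound to write, for each fixed $n$ and all $t\ge 0$, $(Q^n(t)-n)^+ \le \sup_{0\le s\le t} \Psi^n(s)$ for a process $\Psi^n$ built from partial sums of centered inter-arrival and service increments; taking $t\to\infty$ and invoking condition~(\ref{t4}) transfers this to the steady state. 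After scaling by $n^{-1/2}$, Donsker's theorem (using the $2+\epsilon$ moment assumption~(\ref{t1}) for tightness and uniform integrability) gives weak convergence of $\Psi^n(\lfloor n\cdot\rfloor)n^{-1/2}$ to a Brownian motion with drift $-B\mu$ and variance parameter proportional to $\mu^2(c_A^2+c_S^2)$; the supremum of such a process over $[0,\infty)$ is exponentially distributed with rate exactly $2B(c_A^2+c_S^2)^{-1}$, which yields the claimed $\limsup$ inequality once one checks that the $\limsup$ in $n$ and the supremum over time can be interchanged with the tail probability (a uniform-integrability / tightness argument, again powered by~(\ref{t1})).

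For tightness (Theorem~\ref{tightc}), the same upper bound suffices: weak convergence of the scaled dominating supremum to a finite-valued limit immediately gives tightness of $\{(Q^n(\infty)-n)^+ n^{-1/2}\}$, since a sequence dominated (in the stochastic order, for each $n$) by a weakly convergent sequence is tight. The only subtlety is ensuring the steady-state domination is legitimate, i.e. that one may pass to the $t\to\infty$ limit inside the stochastic inequality; this is where condition~(\ref{t4}) and a monotonicity or coupling argument enter, and condition~(\ref{t3}) is used to control the behavior of the service process near zero so that the dominating process is well-behaved.

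For the matching lower bound in the $M/GI/n$ case, I would invoke the Section~\ref{lowersec} construction, which for Poisson arrivals produces a process $\Phi^n$ with $(Q^n(t)-n)^+ \ge \Phi^n(t)$ for a suitably initialized system, and whose scaled steady state converges to the supremum of a Gaussian process with the \emph{same} drift and variance parameters as the upper-bound limit --- this is the ``duality'' alluded to in the introduction. Memorylessness of the exponential inter-arrival times is what makes the lower-bound process tractable (one can ignore the residual inter-arrival time), so the argument genuinely uses the $M$ assumption. Combining the two exponential-rate computations pins the limit to $-2B(c_A^2+c_S^2)^{-1}$, and the $\liminf$/$\limsup$ in $n$ and the $x\to\infty$ limit agree because both bounding limits are genuinely exponential tails with the identified rate.

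The main obstacle I anticipate is the interchange of limits: the bounds hold for each finite $n$ and each finite $t$, but the statement concerns $\limsup_{x}\,\limsup_n \pr(\cdot > x)$, so one must control the tail of $(Q^n(\infty)-n)^+n^{-1/2}$ \emph{uniformly in $n$} well enough that weak convergence of the dominating processes (an in-distribution, compact-time statement) actually transfers to the logarithmic tail asymptotics. This requires either a uniform exponential (or sufficiently strong polynomial) tail bound on the dominating suprema --- obtainable from maximal inequalities for the underlying random walks using assumption~(\ref{t1}) --- together with a careful argument that the supremum over $[0,\infty)$ is not much larger than the supremum over $[0,Tn]$ for $T$ large, uniformly in $n$. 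Getting these uniform estimates, rather than the weak-convergence statements themselves, is where the real work lies.
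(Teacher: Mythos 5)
Your overall architecture matches the paper's: sandwich $(Q^n(\infty)-n)^+$ between the structural upper and lower bounds, pass to Gaussian limits on compact time intervals, control the contribution of large times uniformly in $n$ (this is exactly the paper's Lemma~\ref{largeT1}, and you correctly identify it as the crux of the limit interchange), and then read off the exponent from Gaussian large deviations. However, there are two genuine gaps in the middle of your argument.

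First, the limiting process is not a Brownian motion, and its supremum is not exponentially distributed. In the Halfin--Whitt regime time is \emph{not} rescaled: the scaled pooled departure process $n^{-1/2}\big(\sum_{i=1}^n N_i(t)-n\mu t\big)$ converges (Theorem~\ref{renewalfclt2}) to a Gaussian process ${\mathcal D}(t)$ whose covariance is that of a single equilibrium renewal process, $\E[(N_1(s)-\mu s)(N_1(t)-\mu t)]$; this is Brownian only when $S$ is exponential. Consequently $\E[({\mathcal A}(t)-{\mathcal D}(t))^2]$ is merely \emph{asymptotically} linear, $t^{-1}\E[({\mathcal A}(t)-{\mathcal D}(t))^2]\to\mu(c_A^2+c_S^2)$, and your appeal to ``the supremum of drifted Brownian motion is exponential with rate $2B(c_A^2+c_S^2)^{-1}$'' does not apply. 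The paper instead invokes a logarithmic large-deviation result for suprema of Gaussian processes with stationary increments (Theorem~\ref{largedev1}, from \cite{DO.95}), which requires only the asymptotic variance. Your Donsker step with time index $\lfloor n\cdot\rfloor$ would also distort the drift and variance scaling; the correct statement is Lemma~\ref{renewalfclt3}, with no time change.

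Second, you have misstated the structure of the lower bound, and this hides a real step. The Section~\ref{lowersec} bound is not of the form $\pr\big(\sup_t(\cdots)>x\big)$ with the same process as the upper bound (if it were, the queue tail would \emph{equal} the supremum tail, which is false); it is of the dual form $\pr(Z\ge n)\cdot\sup_{t}\pr\big(A(t)-\sum_i N_i(t)>x\big)$, with the supremum \emph{outside} the probability. To close the argument one must show that the single-time exceedance probability, optimized over $t$, attains the same logarithmic rate $-2B(c_A^2+c_S^2)^{-1}$ as the supremum functional; this is the content of Theorem~\ref{largedev2}, proved via the Gaussian tail at the near-optimal time $t\approx x/(B\mu)$. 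Without this step the lower bound does not match the upper bound, and your proposal as written does not supply it.
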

In words, Theorem \ref{ld1} states that the tail of the limiting
steady-state queue length is bounded from above by $\exp( - 2 B
(c^2_A + c^2_S)^{-1} x + o(x) )$; and when the arrival process is
Poisson, the tail of the limiting steady-state queue length is bounded
from below by $\exp( - 2 B (c^2_A + c^2_S)^{-1} x - o(x) )$,
where $o(x)$ is some nonnegative function s.t. $\lim_{x \rightarrow
\infty} x^{-1} o(x) = 0$. Theorem \ref{ld1} translates into bounds
for the large deviation behavior of any weak limit of the sequence
$\lbrace(\cal{Q}^n(\infty) - n )^+n^{-{1/2}},
n \geq1 \rbrace$, where at least one weak limit exists by Theorem
\ref{tightc}.

Note that the functional form of the exponent $-2 B (c^2_A +
c^2_S)^{-1}$ shows that the probability of large deviations is a
decreasing function of the\vspace*{1pt} excess parameter $B$, and an increasing
function of the squared coefficients of variation $c^2_A$, $c^2_S$.
This is consistent at an intuitive level, since as $B$ grows, the
system becomes less loaded, which should decrease the the probability
of large
deviations. Similarly, as $c^2_A$ and $c^2_S$ grow, the system becomes
more variable, which should increase the probability of large deviations.

Although we conjecture that $-2 B (c^2_A + c^2_S)^{-1}$
should also be the correct large deviations exponent when $A$ is
non-Markovian, our lower-bounding proof technique relies on certain
properties of the steady-state $M/\mathit{GI}/\infty$ queue which do not hold
for the steady-state $\mathit{GI}/\mathit{GI}/\infty$ queue, and thus we leave such an
extension as an open problem.

\section{Upper bound}\label{uppersec}
In this section, we prove a general upper bound for the FCFS $\mathit{GI}/\mathit{GI}/n$
queue, when properly initialized. The bound is valid for all finite
$n$, and works in both the transient and steady-state (when it exists)
regimes. Although we will later customize this bound to the H--W regime
to prove our main results, we note that the bound is in no way limited
to that regime. For a nonnegative r.v. $X$ with finite mean $\E[X] >
0$, let $R(X)$ denote a r.v. distributed as
the residual life distribution of $X$. Namely, for all $z \geq0$,
%
\begin{equation}
\label{rezz} \pr\bigl( R(X) > z \bigr) = \bigl(\E[X]\bigr)^{-1} \int
_{z}^{\infty} \pr(X > y) \,dy.
\end{equation}
Recall that associated with a r.v. $X$, an equilibrium renewal process
with renewal distribution $X$ is a counting process in which the first
inter-event time is distributed as $R(X)$, and all subsequent
inter-event times are drawn i.i.d. distributed as $X$; an ordinary
renewal process with renewal distribution $X$ is a counting process in
which all inter-event times, including the first, are drawn i.i.d.
distributed as $X$. Let $\lbrace N_i(t), i=1,\ldots,n \rbrace$ denote
a set of $n$ i.i.d. equilibrium renewal processes with renewal
distribution $S$. Let $A(t)$ denote an equilibrium renewal process with
renewal distribution $A$, with $A(t), \lbrace N_i(t) \rbrace$ mutually
independent.

Let ${\mathcal Q}$ denote the FCFS $\mathit{GI}/\mathit{GI}/n$ queue with
inter-arrival times drawn i.i.d. distributed as $A$, processing times
drawn i.i.d. distributed as $S$, and the following initial conditions.
For $i=1,\ldots,n$, there is a single job initially being processed on
server $i$, and the set of initial processing times of these $n$
initial jobs is drawn i.i.d. distributed as $R(S)$. There are zero jobs
waiting in queue, and the first inter-arrival time is distributed as
$R(A)$, independent of the initial processing times of those jobs
initially in system. We now establish an upper bound for $Q(t)$, the
number in system at time $t$ in ${\mathcal Q}$.
%
\begin{theorem}\label{ubound1}
For all $x > 0$, and $t \geq0$,
\[
\pr\bigl( \bigl( Q(t) - n \bigr)^+ > x \bigr) \leq\pr\Biggl( \sup
_{0 \leq s
\leq t} \Biggl( A(s) - \sum_{i=1}^n
N_i(s) \Biggr) > x \Biggr).
\]
If in addition $Q(t)$ converges weakly to a stationary distribution
$Q(\infty)$ as $t \rightarrow\infty$, then for all $x > 0$,
\[
\pr\bigl( \bigl( Q(\infty) - n \bigr)^+ > x \bigr) \leq\pr\Biggl( \sup
_{t
\geq0} \Biggl( A(t) - \sum_{i=1}^n
N_i(t) \Biggr) > x \Biggr).
\]
\end{theorem}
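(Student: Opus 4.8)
The natural starting point is the exact balance equation $Q(t) = n + A(t) - D(t)$, where $A(t)$ is the arrival counting process of $\mathcal{Q}$ (an equilibrium renewal process with renewal distribution $A$, by the specified initialization) and $D(t)$ is the cumulative number of departures by time $t$. Hence $(Q(t)-n)^{+} \le \big(A(t) - D(t)\big)^{+}$, and since $A$ is nondecreasing with $A(0)=0$ it suffices to produce a good \emph{lower} bound on $D(t)$ -- equivalently, to compare $\mathcal{Q}$ with a reference system in which the $n$ servers never idle, and whose departure process can be read off as $\sum_{i=1}^{n} N_i(t)$.

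The plan is to introduce the modified $n$-server system $\mathcal{Q}^{+}$ fed the same arrival stream, with the same $n$ initial jobs and the same residual processing times, but in which a server that would otherwise idle instead processes "phantom'' work with a fresh i.i.d.\ $S$-distributed processing time. This construction serves two purposes. First, because the servers of $\mathcal{Q}^{+}$ are always busy, the successive completion epochs of server $i$ (its initial job, then i.i.d.\ $S$-jobs processed back to back) form exactly an equilibrium renewal process with renewal distribution $S$, and across $i$ these are independent; thus the total completion count of $\mathcal{Q}^{+}$ agrees in law with $\sum_{i=1}^{n} N_i(t)$ -- and this is precisely where the $R(S)$, $R(A)$ initialization of $\mathcal{Q}$ is used. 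Second, running phantom work can only delay real departures, so by coupling the arrival epochs and exploiting the i.i.d.\ structure of the processing times one obtains the sample-path domination $Q(t) \le Q^{+}(t)$, where $Q^{+}(t)$ is the number of real jobs in $\mathcal{Q}^{+}$.

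It then remains to bound $Q^{+}(t) - n$. Writing $P(t)$ for the number of phantom completions by time $t$, one has $Q^{+}(t) - n = \big(A(t) - \sum_{i=1}^{n} N_i(t)\big) + P(t)$ with $P$ nondecreasing, $P(0)=0$, and $P$ increasing only while there is spare service capacity, i.e.\ essentially only when $Q^{+}(t) \le n$. A Skorokhod-reflection argument -- let $\tau$ be the last time in $[0,t]$ with $Q^{+}(\tau)\le n$, observe that all servers process real jobs on $(\tau,t]$, and propagate the identity over $(\tau,t]$ -- then yields $Q^{+}(t) - n \le \sup_{0\le s\le t}\big(A(s) - \sum_{i=1}^{n} N_i(s)\big)$, which together with $Q(t)\le Q^{+}(t)$ gives the first inequality. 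For the steady-state statement, the finite-horizon suprema $\sup_{0\le s\le t}\big(A(s)-\sum_i N_i(s)\big)$ increase monotonically in $t$ to $\sup_{t\ge 0}\big(A(t)-\sum_i N_i(t)\big)$, so combining the first bound (applied along $t\to\infty$) with the assumed weak convergence $Q(t)\Rightarrow Q(\infty)$ transfers the inequality to $Q(\infty)$ via a routine monotone passage to the limit and an approximation by continuity points of the limiting law.

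The step I expect to be the main obstacle is making the comparison with $\mathcal{Q}^{+}$ and the reflection bound genuinely rigorous across idle periods. One must set up the coupling so that simultaneously (a) the real jobs of $\mathcal{Q}^{+}$ inherit i.i.d.\ $S$-distributed processing times while the per-server completion streams remain \emph{independent equilibrium} renewal processes, and (b) the reflection term $P$ does not accumulate during the busy excursions that actually generate large values of $Q^{+}(t)-n$ -- a genuine subtlety, since phantom work started in an underloaded phase can complete during a later overloaded phase. Resolving this is exactly where the equilibrium initialization and the order-preserving (FCFS, work-conserving) structure of the dynamics must be invoked; the remaining arguments are bookkeeping with renewal processes.
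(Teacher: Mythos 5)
Your overall architecture is the same as the paper's: keep the servers busy with phantom work so that the pooled completion stream becomes $\sum_{i=1}^n N_i(\cdot)$ (this is exactly where the $R(S)$, $R(A)$ initialization enters), dominate $Q$ by the modified system via a monotonicity lemma for FCFS $G/G/n$ queues, and extract the supremum by a reflection argument. The gap is in the reflection step, and it is precisely the subtlety you flag at the end but do not resolve. Because you track only the \emph{real} jobs $Q^{+}$, your claim that $P$ increases ``essentially only when $Q^{+}(t)\le n$'' is false: a phantom begun during an underloaded phase can complete during a later overloaded excursion, so if $\tau$ is the last time in $[0,t]$ with $Q^{+}(\tau)\le n$, it is not true that all servers process real jobs on $(\tau,t]$, and $P(t)-P(\tau)$ can be strictly positive. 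Propagating $Q^{+}(t)-n=(Q^{+}(\tau)-n)+A(\tau,t)-\sum_i N_i(\tau,t)+(P(t)-P(\tau))$ then leaves an uncontrolled nonnegative term, so the inequality does not follow as you propose. The repair --- which is what the paper does --- is to count the phantom jobs as genuine jobs in the modified system: the total count $\tilde{Q}$ (real plus phantom in system) jumps up by one at each real arrival, stays put at a completion when $\tilde{Q}\le n$ (the artificial arrival exactly offsets the departure), and drops by one at a completion when $\tilde{Q}>n$. This gives an exact Lindley recursion $\tilde{Q}(\tau_k)-n=\max\bigl(0,\tilde{Q}(\tau_{k-1})-n+dA(\tau_k)-\sum_i dN_i(\tau_k)\bigr)$ at the pooled event epochs, whose unfolding yields $\tilde{Q}(t)-n=\sup_{0\le s\le t}\bigl(A(t-s,t)-\sum_i N_i(t-s,t)\bigr)$ exactly; since $Q\le Q^{+}\le\tilde{Q}$ pathwise, this suffices. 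No ``last exit time'' analysis of $Q^{+}$ is needed once the phantoms are included in the count.

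A second, smaller omission: even with the correct reflection, what you obtain pathwise is the supremum of the \emph{backward} increments $A(t-s,t)-\sum_i N_i(t-s,t)$, which is not pathwise equal to $\sup_{0\le s\le t}\bigl(A(s)-\sum_i N_i(s)\bigr)$. You need the distributional identity $\bigl(A(t-s,t)\bigr)_{0\le s\le t}\stackrel{d}{=}\bigl(A(s)\bigr)_{0\le s\le t}$ (and likewise for the pooled $N_i$), which holds because these are equilibrium renewal processes and the two families are independent; this is a second, distinct use of the equilibrium initialization beyond the one you identify. Since the theorem is a statement about distributions this conversion is all that is required, but it must be stated. Your steady-state passage (monotone convergence of the events in $t$ plus weak convergence on the integer lattice) is fine.
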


Note that our bounds are monotone in time, as when $t$ increases the
supremum appearing in Theorem \ref{ubound1} is taken over a larger
time window, and the bound for the steady-state is the natural limit of
these transient bounds.\looseness=1

We will prove Theorem \ref{ubound1} by analyzing a different queueing
system $\tilde{\mathcal Q}$ which represents a ``modified'' queue, in
which all servers are kept busy at all times by adding artificial
arrivals whenever a server would otherwise go idle. We note that our
construction is similar to several constructions appearing in the
literature. Our bounding system is closely related to the so-called
\textit{queue with autonomous service}, a model studied previously by
several authors \cite{B65,W70,W02,KS06}, whose dynamics can be
described as the solution to an appropriate Skorokhod problem
\cite{W02,Sk61}. Another related work is \cite{CTK94}, in which the
queue length of the $G/\mathit{GI}/1$ queue is bounded by considering a modified
system in which the server goes on a vacation whenever it would have
otherwise gone idle. Also, in \cite{HW81}, the queue length of the
$\mathit{GI}/M/n$ queue is bounded by considering a modified system in which a
reflecting barrier is placed at state $n$.

We now construct the FCFS $G/\mathit{GI}/n$ queue $\tilde{\mathcal
Q}$ on the same probability space as $\lbrace N_i(t), i=1,\ldots,n
\rbrace$ and $A(t)$.
We begin by defining two auxiliary processes $\tilde{A}(t)$ and
$\tilde{Q}(t)$, where $\tilde{A}(t)$ will become the arrival process
to $\tilde{\mathcal Q}$, and we will later prove that $\tilde{Q}(t)$
equals the number in system in $\tilde{\mathcal Q}$ at time $t$. Let
$\tau_0 \stackrel{\Delta}{=} 0$, $\lbrace\tau_k, k \geq1 \rbrace
$ denote the sequence of event times in the pooled renewal process
$A(t) + \sum_{i=1}^n N_i(t)$, $dA(t) \stackrel{\Delta}{=} A(t) -
A(t^-)$, $A(s,t) \stackrel{\Delta}{=} A(t) - A(s)$ and $dN_i(t)
\stackrel{\Delta}{=} N_i(t) - N_i(t^-)$, $N_i(s,t) \stackrel{\Delta
}{=} N_i(t) - N_i(s)$ for $i=1,\ldots,n$.\looseness=1

We now define the processes $\tilde{A}(t)$ and $\tilde
{Q}(t)$ inductively over $\lbrace\tau_k, k \geq0 \rbrace$. Let
$\tilde{A}(\tau_0) \stackrel{\Delta}{=} 0$, $\tilde{Q}(\tau_0)
\stackrel{\Delta}{=} n$. Now suppose that for some $k \geq0$, we
have defined $\tilde{A}(t)$ and $\tilde{Q}(t)$ for all $t \leq\tau
_k$. We now define these processes for $t \in(\tau_k, \tau_{k+1}]$.
For $t \in(\tau_k, \tau_{k+1})$, let $\tilde{A}(t) \stackrel
{\Delta}{=} \tilde{A}(\tau_k)$, and $\tilde{Q}(t) \stackrel{\Delta
}{=} \tilde{Q}(\tau_k)$. Note that w.p.1 $dA(\tau_{k+1}) + \sum_{i=1}^n
dN_i(\tau_{k+1}) = 1$, since $R(A)$ and $R(S)$ are continuous
r.v.s, $\pr(A = 0) = \pr(S = 0) = 0$, and $A(t), \lbrace N_i(t),
i=1,\ldots,n \rbrace$ are mutually independent. We define
\[
\tilde{A}(\tau_{k+1}) \stackrel{\Delta} {=} \cases{\displaystyle  \tilde{A}(\tau
_k) + 1, & if $dA(\tau_{k+1}) = 1$;
\vspace*{2pt}\cr
\tilde{A}(
\tau_k) + 1, & if $\displaystyle \sum_{i=1}^n
dN_i(\tau_{k+1}) = 1$ and $\displaystyle \tilde{Q}(\tau_{k})
\leq n$;
\vspace*{2pt}\cr
\displaystyle \tilde{A}(\tau_k), & otherwise $\displaystyle \Biggl(\mbox{i.e. }\sum
_{i=1}^n dN_i(\tau_{k+1}) = 1$
and $\displaystyle \tilde{Q}(\tau_{k}) > n\Biggr)$. }
\]
Similarly, we define
\[
\displaystyle \tilde{Q}(\tau_{k+1}) \stackrel{\Delta} {=} \cases{\tilde{Q}(\tau
_k) + 1, & if $\displaystyle dA(\tau_{k+1}) = 1$;
\vspace*{2pt}\cr
\displaystyle \tilde{Q}(
\tau_k), & if $\displaystyle \sum_{i=1}^n
dN_i(\tau_{k+1}) = 1$ and $\displaystyle \tilde{Q}(\tau_{k})
\leq n$;
\vspace*{2pt}\cr
\displaystyle \tilde{Q}(\tau_k) - 1, & otherwise $\displaystyle \Biggl(\mbox{i.e. }\sum
_{i=1}^n dN_i(\tau
_{k+1}) = 1$ and $\displaystyle \tilde{Q}(\tau_{k}) > n\Biggr)$.}
\]
Combining the above completes our inductive definition of $\tilde
{A}(t)$ and $\tilde{Q}(t)$. Since w.p.1 $\lim_{k \rightarrow\infty}
\tau_k = \infty$, it follows that w.p.1 both $\tilde{A}(t)$ and
$\tilde{Q}(t)$ are well defined on $[0,\infty)$. We note that it also
follows from our construction that w.p.1
both $\tilde{A}(t)$ and $\tilde{Q}(t)$ are r.c.l.l., and define
$d\tilde{A}(t) \stackrel{\Delta}{=} \tilde{A}(t) - \tilde{A}(t^-)$.

We now construct the FCFS $G/\mathit{GI}/n$ queue $\tilde{\mathcal
Q}$ using the auxiliary process $\tilde{A}(t)$. Let $V^j_{i}$ denote
the length of the $j$th renewal interval in process $N_i(t), j \geq1,
i = 1,\ldots,n$. Then $\tilde{\mathcal Q}$ is defined to be the FCFS
$G/\mathit{GI}/n$ queue with arrival process $\tilde{A}(t)$ and processing time
distribution $S$, where
the $j$th job assigned to server $i$ (after time 0) is assigned
processing time $V^{j+1}_{i}$ for $j \geq1, i = 1,\ldots,n$. The
initial conditions for $\tilde{\mathcal Q}$
are s.t. for $i = 1,\ldots,n$, there is a single job initially being
processed on server $i$ with initial processing time $V^1_i$, and there
are zero jobs waiting in queue.

We now analyze $\tilde{\mathcal Q}$, proving the following:
%
\begin{lemma}\label{busybeaver1}
For $i=1,\ldots,n$, exactly one job departs from server $i$ at each
time $t \in\lbrace\sum_{l=1}^j V^l_i, j \geq1 \rbrace$, and there
are no other departures from server $i$. Also, no server ever idles in
$\tilde{\mathcal Q}$, $\tilde{Q}(t)$ equals the number in system in
$\tilde{\mathcal Q}$ at time $t$ for all $t \geq0$, and for all $k
\geq1$,
%
\begin{equation}
\label{recursenumber} \tilde{Q}( \tau_{k} ) - n = \max\Biggl( 0,
\tilde{Q}( \tau_{k-1} ) - n + dA ( \tau_{k} ) - \sum
_{i=1}^n dN_i ( \tau_{k} )
\Biggr).
\end{equation}
\end{lemma}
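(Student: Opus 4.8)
The plan is to verify the four assertions by induction on $k$, tracking the state of $\tilde{\mathcal Q}$ at the event times $\tau_k$. The key invariant to maintain is: (a) at each time $\tau_k$, server $i$ is processing a job whose remaining service time equals the residual of the renewal process $N_i$ at $\tau_k$ (equivalently, server $i$'s next departure occurs at the next event time of $N_i$ after $\tau_k$); (b) the number in system in $\tilde{\mathcal Q}$ at $\tau_k$ equals $\tilde Q(\tau_k)$; and (c) no server has idled on $[0,\tau_k]$. The base case $k=0$ is immediate from the initial conditions: each server $i$ starts a job of length $V^1_i$ at time $0$, so its first departure is at $\sum_{l=1}^1 V^l_i = V^1_i$, which is indeed the first event time of $N_i$; and $\tilde Q(\tau_0) = n$ jobs are in system.

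For the inductive step, fix $k$ and consider the interval $(\tau_k, \tau_{k+1}]$. On the open interval $(\tau_k,\tau_{k+1})$ there are no events of the pooled process, hence no arrivals to $\tilde{\mathcal Q}$ and no renewal epochs in any $N_i$; by the invariant, no server's current job completes in this interval, so $\tilde{\mathcal Q}$'s state is unchanged, consistent with $\tilde A, \tilde Q$ being constant there. At $\tau_{k+1}$ exactly one of the two cases occurs (w.p.1, as noted in the construction). If $dA(\tau_{k+1})=1$: an external arrival occurs in $\tilde{\mathcal Q}$, so the number in system increments by $1$, matching the definition $\tilde Q(\tau_{k+1}) = \tilde Q(\tau_k)+1$; no server's service interval ended, so invariant (a) is preserved, and since the job count only went up no server idles. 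If $\sum_{i=1}^n dN_i(\tau_{k+1}) = 1$, say the $j$th renewal epoch of $N_{i_0}$: server $i_0$'s current job (of length $V^j_{i_0}$, started at $\sum_{l=1}^{j-1}V^l_{i_0}$) completes exactly at $\tau_{k+1} = \sum_{l=1}^{j}V^l_{i_0}$, which is the content of the first sentence of the lemma. Here the construction of $\tilde A$ is what does the work: if $\tilde Q(\tau_k) \le n$ there was no one waiting, so to prevent idling an artificial arrival is injected ($\tilde A$ increments), which is immediately routed to server $i_0$ and assigned processing time $V^{j+1}_{i_0}$ — exactly the next renewal interval of $N_{i_0}$, so invariant (a) is restored and the job count is unchanged, matching $\tilde Q(\tau_{k+1})=\tilde Q(\tau_k)$; if $\tilde Q(\tau_k) > n$ there is a genuine waiting job which moves onto server $i_0$, again assigned $V^{j+1}_{i_0}$ by the FCFS assignment rule, the job count drops by one, matching $\tilde Q(\tau_{k+1}) = \tilde Q(\tau_k)-1$, and no idling occurs. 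In all cases server $i_0$'s departures are precisely at $\lbrace \sum_{l=1}^j V^l_{i_0}, j\ge 1\rbrace$ and there are no others, since each successive job on server $i_0$ is assigned exactly the next $V$-interval.

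Finally, the recursion \eqref{recursenumber} follows by reading off the three cases: writing $\delta_k := dA(\tau_k) - \sum_{i=1}^n dN_i(\tau_k) \in \{-1,+1\}$, the update is $\tilde Q(\tau_k) - n = \tilde Q(\tau_{k-1}) - n + 1$ when $\delta_k = +1$; it is $\tilde Q(\tau_{k-1}) - n$ (i.e. stays at $0$) when $\delta_k = -1$ and $\tilde Q(\tau_{k-1}) \le n$, which is exactly the case $\tilde Q(\tau_{k-1}) - n + \delta_k < 0$ clipped to $0$; and it is $\tilde Q(\tau_{k-1}) - n - 1$ when $\delta_k = -1$ and $\tilde Q(\tau_{k-1}) > n$. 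These three cases are precisely $\max\big(0, \tilde Q(\tau_{k-1}) - n + \delta_k\big)$. I expect the only delicate point is bookkeeping the off-by-one in the service-time assignment — server $i$'s initial job uses $V^1_i$ while the $j$th \emph{post-time-0} job uses $V^{j+1}_i$ — and checking that this indexing makes server $i$'s departure epochs line up exactly with the renewal epochs of $N_i$; everything else is a routine case check driven by the definitions of $\tilde A$ and $\tilde Q$.
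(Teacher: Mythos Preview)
Your proposal is correct and follows essentially the same approach as the paper: induction on the event times $\tau_k$, with a case split on whether $\tau_{k+1}$ is an arrival epoch of $A$ or a renewal epoch of some $N_{i_0}$, and in the latter case a further split on whether $\tilde Q(\tau_k) > n$ or $\tilde Q(\tau_k) = n$. The paper organizes the induction slightly differently (first proving the departure/non-idling claims, then separately verifying that $\tilde Q$ tracks the system size and that the recursion holds), whereas you bundle these into a single invariant package, but the logical content is the same.
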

\begin{pf} The proof proceeds by induction on $\lbrace\tau_k, k \geq
0 \rbrace$, with induction hypothesis that the lemma holds for all $t
\leq\tau_k$.
The base case $k = 0$ follows from the the initial conditions of
$\tilde{\mathcal Q}$ and $\tilde{Q}(t)$.
Thus assume that the induction hypothesis holds for some fixed $k \geq
0$. We first establish the induction step for the statements about the
departure process and nonidling of servers. Let us fix some $i \in
\lbrace1,\ldots,n \rbrace$. By the induction hypothesis, server $i$
was nonidling on $[0,\tau_k]$, and the set of departure times from
server $i$ on $[0,\tau_k]$ was exactly $\lbrace\sum_{l=1}^j V^l_i, j
= 1,\ldots,N_i(\tau_k) \rbrace$. We claim that the next departure
from server $i$ occurs at time $\sum_{l=1}^{N_i(\tau_k)+1} V^l_i$.
Indeed, if $N_i(\tau_k) = 0$, the next departure from server $i$ is
the first departure from server~$i$, which occurs at time $V^1_i$. If
instead $N_i(\tau_k) > 0$, then the last departure from server $i$ to
occur at or before time $\tau_k$ occurred at time $\sum_{l=1}^{N_i(\tau
_k)} V^l_i$. At that time a new job began processing
on server $i$ with processing time $V^{N_i(\tau_k)+1}_i$. This job
will depart at time $\sum_{l=1}^{N_i(\tau_k)+1} V^l_i$, verifying the
claim. It follows that no server idles on $(\tau_k,\tau_{k+1})$,
since $\sum_{l=1}^{N_i(\tau_k)+1} V^l_i \in\lbrace\tau_j, j \geq1
\rbrace$, and thus $\sum_{l=1}^{N_i(\tau_k)+1} V^l_i \geq\tau
_{k+1}$. We now treat two cases. First, suppose $\sum_{l=1}^{N_i(\tau
_k)+1} V^l_i > \tau_{k+1}$. Then there are no departures from server
$i$ on $(\tau_k, \tau_{k+1}]$ and the induction step follows
immediately from the induction hypothesis. Alternatively, suppose $\sum
_{l=1}^{N_i(\tau_k)+1} V^l_i = \tau_{k+1}$. In this case the next
departure from server $i$ occurs at time $\tau_{k+1}$, $dN_i(\tau
_{k+1}) = 1$, and all other servers are nonidling and have no
departures on $(\tau_k,\tau_{k+1}]$. Thus if there are at least $n+1$
jobs in $\tilde{\mathcal Q}$ at time $\tau_k$, then there are at
least $n+1$ jobs in $\tilde{\mathcal Q}$ at time $\tau_{k+1}^-$, and
some job begins processing on server $i$ at time $\tau_{k+1}$.
Alternatively, if there are exactly $n$ jobs in $\tilde{\mathcal Q}$
at time $\tau_k$,
then $\tilde{Q}(\tau_k) = n$ by the induction hypothesis. Thus
$d\tilde{A}(\tau_{k+1}) = 1$, and this arrival immediately begins
processing on server $i$. Combining the above treats all cases since
there are at least $n$ jobs in $\tilde{\mathcal Q}$ at time $\tau_k$
by the induction hypothesis, completing the induction step.

We now prove the induction step for the statement that
$\tilde{Q}(t)$ equals the number in system in $\tilde{\mathcal Q}$ at
time $t$, as well as (\ref{recursenumber}). Since we have already
proven that any departures from $\tilde{\mathcal Q}$ on $(\tau_k,
\tau_{k+1}]$ occur at time $\tau_{k+1}$, and by construction any
jumps in $\tilde{A}(t)$ and $\tilde{Q}(t)$ on $(\tau_k, \tau
_{k+1}]$ occur at time $\tau_{k+1}$, it suffices to prove that $\tilde
{Q}(\tau_{k+1})$ equals the number in system in $\tilde{\mathcal Q}$
at time $\tau_{k+1}$. First, suppose $dA(\tau_{k+1}) = 1$. Then $\sum
_{i=1}^n dN_i( \tau_{k+1}) = 0$, $\tilde{Q}( \tau_{k} ) \geq n$
by the induction hypothesis, and $\tilde{Q}( \tau_{k+1} ) = \tilde
{Q}( \tau_{k} ) + 1$. Thus
\begin{eqnarray*}
\max\Biggl(0, \tilde{Q}( \tau_{k} ) - n + dA( \tau_{k+1} )
- \sum_{i=1}^{n} dN_i(
\tau_{k+1} ) \Biggr) &=& \max\bigl(0, \tilde{Q}( \tau_{k} ) -
n + 1 \bigr)
\\
&=& \tilde{Q}( \tau_{k} ) - n + 1
\\
&=& \tilde{Q}( \tau_{k+1} ) - n,
\end{eqnarray*}
showing that (\ref{recursenumber}) holds. Note that $\sum_{i=1}^n
dN_i( \tau_{k+1}) = 0$ implies that\break $\sum_{l=1}^{N_i(\tau_k)+1}
V^l_i > \tau_{k+1}$ for all $i = 1,\ldots,n$, and we have already
proven that in this case there are no departures from $\tilde{\mathcal
Q}$ on $(\tau_k, \tau_{k+1}]$. Since $dA(\tau_{k+1}) = 1$ implies
$d\tilde{A}(\tau_{k+1}) = 1$, it follows that the number in system in
$\tilde{\mathcal Q}$ at time $\tau_{k+1}$ is one more than the number
in system in $\tilde{\mathcal Q}$ at time $\tau_{k}$. Thus $\tilde
{Q}(\tau_{k+1})$ equals the number in system in $\tilde{\mathcal Q}$
at time $\tau_{k+1}$ by the induction hypothesis.

Now suppose that $\sum_{i=1}^n dN_i( \tau_{k+1} ) = 1$.
Then $dA(\tau_{k+1}) = 0$, and there exists a unique index $i^*$ s.t.
$\sum_{l=1}^{N_{i^*}(\tau_k)+1} V^l_{i^*} = \tau_{k+1}$. We have
already proven that in this case there are no departures from
$\tilde{\mathcal Q}$ on $(\tau_k,\tau_{k+1})$, and a single
departure from $\tilde{\mathcal Q}$ at time $\tau_{k+1}$ (on server
$i^*$). First suppose that
there are at least $n+1$ jobs in $\tilde{\mathcal Q}$ at time~$\tau
_k$. Then $\tilde{Q}( \tau_{k} ) \geq n + 1$ by the induction
hypothesis, and $\tilde{Q}( \tau_{k+1} ) = \tilde{Q}( \tau_{k} ) -
1$. Thus
\begin{eqnarray*}
\max\Biggl(0, \tilde{Q}( \tau_{k} ) - n + dA( \tau_{k+1} )
- \sum_{i=1}^n dN_i(
\tau_{k+1} ) \Biggr) &=& \max\bigl(0, \tilde{Q}( \tau_{k} ) -
n - 1 \bigr)
\\
&=& \tilde{Q}( \tau_{k} ) - n - 1
\\
&=& \tilde{Q}( \tau_{k+1} ) - n,
\end{eqnarray*}
showing that (\ref{recursenumber}) holds. Since $d\tilde{A}(\tau
_{k+1}) = 0$, there are no arrivals to $\tilde{\mathcal Q}$ on $(\tau
_k, \tau_{k+1}]$. Combining the above, we find that the number in
system in $\tilde{\mathcal Q}$ at time $\tau_{k+1}$ is one less than
the number in system in $\tilde{\mathcal Q}$ at time $\tau_{k}$. Thus
$\tilde{Q}(\tau_{k+1})$ equals the number in system in $\tilde
{\mathcal Q}$ at time $\tau_{k+1}$ by the induction hypothesis.

Alternatively, suppose that $\sum_{i=1}^n dN_i( \tau_{k+1}
) = 1$ and there are exactly $n$ jobs in $\tilde{\mathcal Q}$ at time
$\tau_k$.
Then $\tilde{Q}( \tau_{k} ) = n$ by the induction hypothesis, and
$\tilde{Q}( \tau_{k+1} ) = \tilde{Q}( \tau_{k} )$. Thus
\begin{eqnarray*}
\max\Biggl(0, \tilde{Q}( \tau_{k} ) - n + dA( \tau_{k+1} )
- \sum_{i=1}^n dN_i(
\tau_{k+1} ) \Biggr) &=& \max\bigl(0, \tilde{Q}( \tau_{k} ) -
n - 1 \bigr)
\\
&=& 0
\\
&=& \tilde{Q}( \tau_{k+1} ) - n,
\end{eqnarray*}
showing that (\ref{recursenumber}) holds. Since $d\tilde{A}(\tau
_{k+1}) = 1$, there is a single arrival to $\tilde{\mathcal Q}$ on
$(\tau_k, \tau_{k+1}]$. Combining the above, we find that the number
in system in $\tilde{\mathcal Q}$ at time $\tau_{k+1}$ equals the
number in system in $\tilde{\mathcal Q}$ at time $\tau_{k}$. Thus
$\tilde{Q}(\tau_{k+1})$ equals the number in system in $\tilde
{\mathcal Q}$ at time $\tau_{k+1}$ by the induction hypothesis. Since
$\tilde{Q}(\tau_k) \geq n$ by the induction hypothesis, this treats
all cases, completing the proof of the induction and the lemma.
\end{pf}
We now ``unfold'' recursion (\ref{recursenumber}) to derive a simple
one-dimensional random walk representation for $\tilde{Q}(t)$. The
relationship between recursions such as (\ref{recursenumber}) and the
suprema of associated one-dimensional random walks is well known (see
\cite{B65,CTK94}), and can also be formalized by studying
the appropriate Skorokhod problem \cite{Sk61}.
Furthermore, although it seems that $\tilde{Q}(t) - n$ cannot be
immediately related to the Skorokhod problem naturally associated with
$Q(t) - n$, we note that such a formulation may be possible through the
framework of jump reflection; see \cite{Prot80}.

Then it follows from (\ref{recursenumber}) and a
straightforward induction on $\lbrace\tau_k, k \geq0 \rbrace$ that
w.p.1, for all $k \geq0$,
\[
\tilde{Q}( \tau_k ) - n = \max_{0 \leq j \leq k} \Biggl( A (
\tau_{k-j},\tau_{k} ) - \sum_{i=1}^n
N_i ( \tau_{k-j}, \tau_k ) \Biggr).
\]
As all jumps in $\tilde{Q}(t)$ occur at times $t \in\lbrace\tau_k,
k \geq1 \rbrace$, we have the following:\eject
%
\begin{corollary}\label{uprop10}
W.p.1, for all $t \geq0$,
\[
\tilde{Q}(t) - n = \sup_{0 \leq s \leq t} \Biggl( A ( t - s,t ) - \sum
_{i=1}^n N_i ( t - s, t )
\Biggr).
\]
\end{corollary}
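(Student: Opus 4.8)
The plan is to derive the corollary from recursion~(\ref{recursenumber}) of Lemma~\ref{busybeaver1} in two steps, working throughout on the probability-one event on which $\tilde{A},\tilde{Q}$ are well-defined, $\lim_{k\to\infty}\tau_k=\infty$, and exactly one of the pooled renewal processes jumps at each $\tau_k$: first unfold the recursion at the epochs $\lbrace\tau_k\rbrace$ into a Lindley-type maximum, and then extend the resulting identity to all $t\geq 0$ using the fact that every process in sight is a right-continuous step function that is constant between consecutive epochs.

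For the first step, set $W_k\stackrel{\Delta}{=}\tilde{Q}(\tau_k)-n$ and $X_k\stackrel{\Delta}{=}dA(\tau_k)-\sum_{i=1}^n dN_i(\tau_k)$, so that Lemma~\ref{busybeaver1} gives $W_0=0$ and $W_k=\max\big(0,W_{k-1}+X_k\big)$ for $k\geq 1$. I would show by induction on $k$ that
$$W_k=\max_{0\leq j\leq k}\ \sum_{l=k-j+1}^{k}X_l,$$
interpreting the $j=0$ term as the empty sum, which equals zero. The base case $k=0$ is immediate; for the inductive step one substitutes the hypothesis into $W_{k+1}=\max(0,W_k+X_{k+1})$ and re-indexes the inner maximum by $j\mapsto j+1$, the $0$ in $\max(0,\cdot)$ supplying the $j=0$ term. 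Since $A(t)$ and each $N_i(t)$ are step functions constant on $[\tau_{l-1},\tau_l)$, we have $A(\tau_l)-A(\tau_{l-1})=dA(\tau_l)$ and likewise for $N_i$, so telescoping yields $\sum_{l=k-j+1}^{k}X_l=A(\tau_{k-j},\tau_k)-\sum_{i=1}^n N_i(\tau_{k-j},\tau_k)$; hence $\tilde{Q}(\tau_k)-n=\max_{0\leq j\leq k}\big(A(\tau_{k-j},\tau_k)-\sum_{i=1}^n N_i(\tau_{k-j},\tau_k)\big)$ for all $k\geq 0$.

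For the second step, fix $t\geq 0$ and let $k$ be the unique index with $t\in[\tau_k,\tau_{k+1})$. By construction $\tilde{Q}(t)=\tilde{Q}(\tau_k)$, and since no pooled event occurs in $(\tau_k,\tau_{k+1})$ we also have $A(t)=A(\tau_k)$ and $N_i(t)=N_i(\tau_k)$. Writing $g(u)\stackrel{\Delta}{=}A(u)-\sum_{i=1}^n N_i(u)$, the bracket inside the supremum is $g(t)-g(t-s)=g(\tau_k)-g(t-s)$, and as $s$ runs over $[0,t]$ the argument $t-s$ runs over $[0,t]\subseteq[0,\tau_{k+1})$; because $g$ is constant on each $[\tau_{l-1},\tau_l)$, its infimum over $[0,t]$ equals $\min_{0\leq l\leq k}g(\tau_l)$. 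Therefore
$$\sup_{0\leq s\leq t}\big(g(t)-g(t-s)\big)=g(\tau_k)-\min_{0\leq l\leq k}g(\tau_l)=\max_{0\leq j\leq k}\big(g(\tau_k)-g(\tau_{k-j})\big),$$
which by the first step equals $\tilde{Q}(\tau_k)-n=\tilde{Q}(t)-n$, giving the corollary.

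The argument contains no real obstacle; the only place demanding a little care is this last reduction of a supremum over the continuum $s\in[0,t]$ to a maximum over the finitely many epochs, which relies on the step-function structure of the equilibrium renewal processes (in particular that no two events coincide, already invoked in the construction) together with $\tilde{Q}$ jumping only at the $\tau_k$. Keeping the r.c.l.l.\ conventions consistent also ensures the boundary case $t=\tau_k$ is handled by the interval $[\tau_k,\tau_{k+1})$ without special treatment.
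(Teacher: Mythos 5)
Your proposal is correct and follows essentially the same route as the paper: unfold the Lindley-type recursion (\ref{recursenumber}) by induction to obtain $\tilde{Q}(\tau_k)-n=\max_{0\leq j\leq k}\big(A(\tau_{k-j},\tau_k)-\sum_{i=1}^n N_i(\tau_{k-j},\tau_k)\big)$, then pass to all $t\geq 0$ using the fact that $\tilde{Q}$, $A$, and the $N_i$ are piecewise constant between the pooled event times. The paper leaves both steps as "straightforward"; you have simply written out the details, and they check out.
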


We now prove that $\tilde{Q}(t)$ provides an upper bound for $Q(t)$.
%
\begin{proposition}\label{uprop2}
$Q(t)$ and $\tilde{Q}(t)$ can be constructed on the same probability
space so that w.p.1 $Q(t) \leq\tilde{Q}(t)$ for all $t \geq0$.
\end{proposition}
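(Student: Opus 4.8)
The plan is to construct $\mathcal{Q}$ directly out of $\tilde{\mathcal{Q}}$ by ``erasing the artificial jobs.'' First build $\tilde{\mathcal{Q}}$ as above on the probability space carrying $A(t)$ and $\lbrace N_i(t)\rbrace$. Every job of $\tilde{\mathcal{Q}}$ — the $n$ initial jobs, the real arrivals, and the artificial arrivals — eventually enters service (no server ever idles and FCFS is respected), and when it does it is assigned a definite processing time: $V_i^1$ for the job initially on server $i$, and $V_i^{j+1}$ for the $j$-th job to enter service on server $i$ after time $0$. Now let $\mathcal{Q}$ be the FCFS $GI/GI/n$ queue on the same probability space whose arrival process is $A(t)$, whose $n$ initial jobs carry the durations $V_1^1,\ldots,V_n^1$, and in which each job arriving after time $0$ (i.e.\ each job counted by $A$) carries exactly the processing time it is assigned in $\tilde{\mathcal{Q}}$. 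I would then check that $\mathcal{Q}$ so defined is genuinely a realization of the queue described in this section: its arrival process is an equilibrium renewal process with renewal distribution $A$ (it literally is $A(t)$); the $n$ initial processing times are i.i.d.\ $R(S)$; and — by the usual argument that in a work-conserving FCFS queue each successive job to enter service may be given a ``fresh'' $S$-distributed processing time independent of the history up to that moment (here the server-and-slot index $(i,j)$ that a real job occupies in $\tilde{\mathcal{Q}}$ is measurable with respect to that history, while $V_i^{j+1}$ is independent of it) — the processing times of the jobs arriving after time $0$ are i.i.d.\ $S$ and independent of $A(t)$.

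With $\mathcal{Q}$ constructed this way, $\tilde{\mathcal{Q}}$ is precisely $\mathcal{Q}$ with additional jobs inserted into the arrival stream, namely the artificial arrivals of $\tilde{\mathcal{Q}}$ — countably many, only finitely many in any bounded time window — each of the originally present jobs keeping its arrival epoch and its processing time. Hence $Q(t)\le \tilde{Q}(t)$ for all $t\ge 0$ follows from the monotonicity of the FCFS $GI/GI/n$ queue under the addition of jobs: inserting a job (with its own processing time) into the input of a FCFS $n$-server system cannot advance the departure epoch of any other job, so at every time $t$ the resulting increase in the number of arrivals by time $t$ is at least the increase in the number of departures by time $t$, and the number in system at time $t$ does not decrease. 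Iterating over the artificial jobs (added one at a time, which is legitimate since in any $[0,T]$ only finitely many are present) gives $Q(t)\le\tilde{Q}(t)$ for all $t$, w.p.1.

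The two places needing care — and where I expect the main obstacle — are exactly these two ``standard'' ingredients. First, the assertion that the processing times inherited by $\mathcal{Q}$'s post-time-$0$ jobs are i.i.d.\ $S$ and independent of the arrivals must be argued, not merely invoked, since the index $(i,j)$ is a nontrivial functional of $(A,\lbrace N_k\rbrace)$; this is the classical ``each job draws a fresh service time'' lemma, but it has to be set up for this particular selection rule (a job's slot in $\tilde{\mathcal{Q}}$). Second, the departure-time monotonicity of FCFS $GI/GI/n$ under adding jobs has to be used in the present setting, where processing times are attached to jobs rather than servers and where $n$ distinguished jobs are in service at time $0$; I would either cite it from the literature on stochastic comparison of queues or prove it via the explicit recursion for the sorted departure epochs of a FCFS $GI/GI/n$ queue. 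Once these are in hand, the remainder — carrying out the construction and the arrival-minus-departure accounting — is routine.
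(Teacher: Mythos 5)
Your construction is correct and is essentially the paper's own proof: both arguments couple ${\mathcal Q}$ to $\tilde{\mathcal Q}$ by giving the $n$ initial jobs and the real arrivals identical processing times in the two systems, observe that $\tilde{\mathcal Q}$'s input is then ${\mathcal Q}$'s input plus extra (artificial) jobs, and conclude by FCFS multiserver monotonicity. The monotonicity result you propose to cite or re-derive is exactly Lemma\ \ref{qom}, already established just before this proposition, and the ``fresh service time'' identification you flag as needing care is the same (tacit) step the paper relies on when it matches processing times across the two queues.
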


For our later results, it will be useful to first prove a general
comparison result for $G/G/n$ queues. Although such results seem to be
generally known in the queueing literature (see \cite{W81c,SY89}), we
include a proof for completeness. For an event~$E$, let $I(E)$ denote
the indicator function of $E$.
%
\begin{lemma}\label{qom}
Let ${\mathcal Q}^1$ and ${\mathcal Q}^2$ be two FCFS $G/G/n$ queues
with finite, strictly positive inter-arrival and processing times.
Let $\lbrace T^i_k, k \geq1 \rbrace$ denote the ordered sequence of
arrival times to ${\mathcal Q}^i$, $i \in\lbrace1,2 \rbrace$. Let
$S^i_k$ denote the processing time assigned to the job that arrives to
${\mathcal Q}^i$ at time $T^i_k$, $k \geq1, i \in\lbrace1,2 \rbrace$.
Further suppose that:
\begin{longlist}
\item\label{as1} the initial number in system in ${\mathcal Q}^1$ is at most
$n$;
\item\label{as2} for each job $J$ initially in ${\mathcal Q}^1$, there is a
distinct corresponding job $J'$ initially in ${\mathcal Q}^2$
s.t. the initial processing time of $J$ in ${\mathcal Q}^1$ equals the
initial processing time of $J'$ in ${\mathcal Q}^2$;
\item\label{as3}$\lbrace T^1_k, k \geq1\rbrace$ is a subsequence of $\lbrace
T^2_k, k \geq1 \rbrace$;
\item\label{as4} for all $k \geq1$, the job that arrives to ${\mathcal Q}^2$ at
time $T^1_k$ is assigned processing time $S^1_k$, the same processing
time assigned to the job which arrives to ${\mathcal Q}^1$ at that
time.
\end{longlist}

Then the number in system in ${\mathcal Q}^2$ at time $t$ is at least
the number in system in ${\mathcal Q}^1$ at time $t$ for all $t \geq0$.
\end{lemma}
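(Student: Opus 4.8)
The plan is to couple the two queues on a common probability space and argue by a standard "sample-path monotonicity" induction over the ordered sequence of event times (arrivals to $\mathcal Q^2$ and departures from either queue). Let me set up the right invariant. For each job present in $\mathcal Q^1$, I want to identify a distinct job present in $\mathcal Q^2$ that has the same (or more) remaining processing requirement; this is cleanest if I track, for each job, the quantity ``total processing time assigned minus processing time already received''. Concretely, I would maintain the following invariant at all times $t$: there is an injection $\phi_t$ from the jobs in system in $\mathcal Q^1$ at time $t$ to the jobs in system in $\mathcal Q^2$ at time $t$ such that for each job $J$ in $\mathcal Q^1$, the residual processing time of $J$ is at least the residual processing time of $\phi_t(J)$, \emph{and} $\phi_t$ respects the FCFS order (if $J$ arrived before $K$ in $\mathcal Q^1$, then $\phi_t(J)$ arrived before $\phi_t(K)$ in $\mathcal Q^2$). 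Since $\phi_t$ is injective, the number in system in $\mathcal Q^2$ dominates that in $\mathcal Q^1$, which is the desired conclusion. The base case at $t=0$ is furnished by assumption (\ref{as2}): since the initial number in $\mathcal Q^1$ is at most $n$ (assumption (\ref{as1})) all initial jobs in $\mathcal Q^1$ are in service, and the correspondence in (\ref{as2}) matches initial processing times exactly, so $\phi_0$ exists with equality of residuals.

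For the inductive step, I would consider the successive event times $0 = t_0 < t_1 < t_2 < \cdots$ at which something changes in either system (an arrival to $\mathcal Q^2$, which by (\ref{as3}) includes every arrival to $\mathcal Q^1$; or a departure from $\mathcal Q^1$ or $\mathcal Q^2$), and show the invariant is preserved across each interval $(t_j, t_{j+1}]$. Between event times, both systems serve their in-service jobs at unit rate, and because $\phi_{t_j}$ preserves FCFS order and $\mathcal Q^1$ has at most... well, I need to be careful here — the key point is that if $J$ is in service in $\mathcal Q^1$ then, being among the earliest-arrived jobs in $\mathcal Q^1$, its image $\phi_{t_j}(J)$ is among a correspondingly early set in $\mathcal Q^2$; but $\mathcal Q^2$ may have \emph{more} jobs, so $\phi_{t_j}(J)$ might be in service or might be waiting. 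In either case $\phi_{t_j}(J)$'s residual does not decrease faster than $J$'s (it decreases at rate $1$ if in service, rate $0$ if waiting, while $J$'s decreases at rate $1$), so the residual-domination is preserved on the open interval. At $t_{j+1}$: if the event is an arrival, by (\ref{as3})--(\ref{as4}) either it is a common arrival (add the new job to both, mapped to each other, with equal residuals and consistent FCFS order since it is the latest arrival in both) or it is an arrival to $\mathcal Q^2$ only (extend $\phi$ unchanged — the new $\mathcal Q^2$ job is simply unmatched). If the event is a departure from $\mathcal Q^2$, say of job $J'$: if $J'$ is unmatched, $\phi$ restricts fine; if $J' = \phi(J)$, then $J'$'s residual hit $0$, so $J$'s residual is also $0$, hence $J$ departs from $\mathcal Q^1$ at this instant too, and we drop both. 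The one case requiring real thought is a departure from $\mathcal Q^1$, of a job $J$: then $J$'s residual hit $0$, and domination forces $\phi(J)$'s residual to be $0$ as well, so $\phi(J)$ departs from $\mathcal Q^2$ simultaneously; restrict $\phi$. After handling the departure(s), a new job may enter service in each queue; here I must re-examine whether residual-domination still holds for the newly-in-service jobs, but since residuals of waiting jobs were never touched and FCFS order is preserved, the newly-in-service job in $\mathcal Q^1$ is matched (via $\phi$) to a job in $\mathcal Q^2$ that is either already in service or becomes in service or stays waiting, and in all cases its residual is $\geq$ the $\mathcal Q^1$ job's residual — this holds because the match was set at arrival time with equal residuals and neither has been served more than the other (the $\mathcal Q^2$ copy, if it waited longer before entering service, has been served \emph{less}).

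The main obstacle, and the place I would spend the most care, is verifying that $\phi_t$ can be taken to \emph{respect FCFS order} throughout and that this order-preservation is exactly what guarantees "if $J$ is in service in $\mathcal Q^1$ then its residual is dominated by that of $\phi_t(J)$" even though $\phi_t(J)$ need not be in service in $\mathcal Q^2$. The subtlety is that $\mathcal Q^2$ may have a longer queue, so the server assignments differ; one must argue that a job served in $\mathcal Q^1$ corresponds under the order-preserving injection to a job that has received \emph{no more} service in $\mathcal Q^2$ than $J$ has in $\mathcal Q^1$ — intuitively because $\mathcal Q^2$ is "more congested" so each matched job languishes at least as long. I would formalize this by strengthening the invariant to also record that the \emph{amount of service received} by $J$ in $\mathcal Q^1$ is at least that received by $\phi_t(J)$ in $\mathcal Q^2$ (equivalently, since processing times of matched jobs are equal by (\ref{as4}), the residual comparison). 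With the invariant stated at this level of detail, each of the finitely many event types in $(t_j,t_{j+1}]$ is handled by a short case check as sketched above, and letting $t_j \uparrow \infty$ (using that event times have no finite accumulation point, since inter-arrival and processing times are strictly positive) completes the proof for all $t \geq 0$.
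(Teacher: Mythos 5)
Your overall strategy --- an event-by-event sample-path induction maintaining an order-preserving injection $\phi_t$ from the jobs in $\mathcal{Q}^1$ to the jobs in $\mathcal{Q}^2$, together with a comparison of service received by matched jobs --- is a legitimate route to the lemma and genuinely different from the paper's. The paper instead argues in two stages: it first shows directly that each initial job of $\mathcal{Q}^1$ departs at time equal to its initial processing time (possible because, by hypothesis, all of them enter service at time $0$) while its partner in $\mathcal{Q}^2$ departs no earlier; it then writes the departure time of the $k$th common arrival explicitly as $\inf\{t \geq T^1_k : Z^1(t) + \sum_{j<k} I(D^1_j>t) \leq n-1\} + S^1_k$ and proves $D^2_k \geq D^1_k$ by induction on $k$, never tracking residual processing times at all. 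That formulation absorbs your ``main obstacle'' (that a job in service in $\mathcal{Q}^1$ may be matched to a waiting job in $\mathcal{Q}^2$, but never vice versa) into a single monotone counting inequality.

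Two points in your writeup need repair before it is a proof. First, the direction of your residual invariant is stated inconsistently, and the direction you announce in the second paragraph is the wrong one. You write there that the residual of $J$ is \emph{at least} that of $\phi_t(J)$; what the argument needs (and what your closing sentence about ``amount of service received'' actually says --- the two statements are opposites, not equivalents as you claim) is that the residual of $J$ is \emph{at most} that of $\phi_t(J)$, i.e.\ $\phi_t(J)$ has received no more service than $J$. Under the wrong direction your treatment of a departure of a matched job $J'=\phi(J)$ from $\mathcal{Q}^2$ collapses: the residual of $J'$ hitting zero would then tell you nothing about $J$, and the injection could be destroyed. (Symmetrically, your claim that when $J$ departs $\mathcal{Q}^1$ its partner must depart simultaneously is false under the correct invariant --- but it is also unnecessary; one simply restricts $\phi$.) Second, the step you yourself flag as the crux --- that during an inter-event interval $\phi_t(J)$ is served no faster than $J$ --- is asserted via ``$\mathcal{Q}^2$ is more congested'' but never proved. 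The needed statement is that if $J$ is \emph{waiting} in $\mathcal{Q}^1$ then $\phi_t(J)$ is waiting in $\mathcal{Q}^2$; this follows from FCFS plus the fact that an order-preserving injection maps the set of jobs ahead of $J$ still in system injectively into the set of jobs ahead of $\phi_t(J)$ still in system, so the latter count dominates the former and in particular is $\geq n$ whenever the former is. Writing out that counting argument (and fixing the sign) would complete your proof; it is essentially the same inequality the paper exploits through its departure-time recursion.
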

\begin{pf}
The proof is deferred to the \hyperref[app]{Appendix}.
\end{pf}

We now complete the proof of Proposition \ref{uprop2}.
\begin{pf*}{Proof of Proposition \ref{uprop2}}
We construct $\tilde{\mathcal Q}$ and ${\mathcal Q}$ on the same
probability space. We assign ${\mathcal Q}$ and $\tilde{\mathcal Q}$
the same initial conditions, and let $A(t)$ be the arrival process to
${\mathcal Q}$ on $(0,\infty)$. Let $\lbrace t_k, k \geq1 \rbrace$
denote the ordered sequence of event times in $A(t)$. It follows from
the construction of $\tilde{A}(t)$ that $\lbrace t_k, k \geq1
\rbrace$ is a subsequence of the set of event times in $\tilde{A}(t)$.
We let the processing time assigned to the arrival to $\tilde{\mathcal
Q}$ at time $t_k$ equal the processing time assigned to the arrival to
${\mathcal Q}$ at time $t_k$, $k \geq1$. It follows that w.p.1
${\mathcal Q}$ and $\tilde{\mathcal Q}$ satisfy the conditions of
Lemma \ref{qom}. Combining the above with Lemma~\ref{busybeaver1}
completes the proof.\vadjust{\goodbreak}
\end{pf*}

We now complete the proof of Theorem \ref{ubound1}.
\begin{pf*}{Proof of Theorem \ref{ubound1}}
By elementary renewal theory (see \cite{Cox70}), ${A}(s)_{0 \leq s
\leq t}$ has the same distribution (on the process level) as ${A}
(t - s, t )_{0 \leq s \leq t}$, and $\sum_{i=1}^n N_i(s)_{0 \leq s
\leq t}$ has the same distribution (on the process level) as\break
$\sum_{i=1}^n N_i (t - s, t )_{0 \leq s \leq t}$. Combining
with the independence of $A(t)$ and\break $\sum_{i=1}^n N_i(t)$, Corollary
\ref{uprop10} and Proposition \ref{uprop2}, proves the theorem.

We now prove the corresponding steady-state result. Note that
for any $x > 0$, the sequence of events $ \lbrace\sup_{0 \leq s
\leq t} ( A(s) - \sum_{i=1}^n N_i(s) ) > x, t \geq0
\rbrace$ is monotonic in~$t$. It follows from the continuity of
probability measures that
\[
\lim_{t \rightarrow\infty} \pr\Biggl( \sup_{0 \leq s \leq t} \Biggl(
A(s) - \sum_{i=1}^n N_i(s)
\Biggr) > x \Biggr) = \pr\Biggl( \sup_{t
\geq0} \Biggl( A(t) - \sum
_{i=1}^n N_i(t) \Biggr) > x
\Biggr).
\]
The steady-state result then follows from the corresponding transient
result and the definition of weak convergence, since $Q(\infty)$ has
integer support.
\end{pf*}

\section{Lower bound}\label{lowersec}
In this section, we prove a general lower bound for the $M/\mathit{GI}/n$ queue,
when properly initialized. Suppose $A$ is an exponentially distributed
r.v. Let $Z$ denote a Poisson r.v. with mean
$\frac{\mu_A}{\mu_S}$.\vspace*{1pt}
Let ${\mathcal Q}_2$ denote the $M/\mathit{GI}/n$ queue with inter-arrival times
drawn i.i.d. distributed as $A$, processing times drawn i.i.d.
distributed as $S$ and the following initial conditions. At time 0
there are $Z$ jobs in system. This set of initial jobs have initial
processing times drawn i.i.d. distributed as $R(S)$, independent of
$Z$. If $Z \geq n$, a set of exactly $n$ initial jobs is selected
uniformly at random (u.a.r.) to be processed initially, and the
remaining initial jobs queue for processing. Suppose also that the
first inter-arrival time is distributed as $R(A)$ (also an
exponentially distributed r.v.) independent of both $Z$ and the initial
processing times of those jobs initially in the system. Recall the
processes $A(t)$ and $\lbrace N_i(t), i=1,\ldots,n \rbrace$, which
were defined previously at the start of Section \ref{uppersec}. Then
$Q_2(t)$, the number in system at time $t$ in ${\mathcal Q}_2$, satisfies
%
\begin{theorem}\label{lbound1}
For all $x > 0$, and $t \geq0$,
\[
\pr\bigl( \bigl( Q_2(t) - n \bigr)^+ > x \bigr) \geq\pr( Z \geq n
) \sup_{0 \leq s \leq t} \pr\Biggl( A(s) - \sum
_{i=1}^n N_i(s) > x \Biggr).
\]
If in addition $Q_2(t)$ converges weakly to a stationary distribution
$Q(\infty)$ as \mbox{$t \rightarrow\infty$}, then for all $x > 0$,
\[
\pr\bigl( \bigl( Q(\infty) - n \bigr)^+ > x \bigr) \geq\pr( Z \geq n
) \sup
_{t \geq0} \pr\Biggl( A(t) - \sum_{i=1}^n
N_i(t) > x \Biggr).
\]
\end{theorem}
Comparing with Theorem \ref{ubound1}, we see that our upper and lower
bounds exhibit a certain duality, marked by the order of the $\pr$ and
$\sup$ operators.\eject

We will prove Theorem \ref{lbound1} by coupling ${\mathcal
Q}_2$ to \textit{both} an associated FCFS $M/\mathit{GI}/\infty$ queue
${\mathcal Q}_{\infty}$ and a certain family of FCFS $G/G/n$ queues
$\lbrace{\mathcal Q}^s_2,\break s \geq0 \rbrace$. For each \mbox{$s \geq0$},
our coupling ensures that $Q^s_2(t)$, the number in system at time $t$
in ${\mathcal Q}^s_2$, provides a lower bound for $Q_2(t)$ for all $t
\geq s$, and that the set of remaining processing times (at time $s$)
of those jobs in ${\mathcal Q}^s_2$ at time $s$ is a random thinning of
the set of remaining processing times (at time~$s$) of those jobs in
${\mathcal Q}_{\infty}$ at time $s$. We note that some of the ideas
involved in the proof of our lower bound have appeared in the
literature before; see \cite{Stoyan83,SY89,W00}.

We now construct ${\mathcal Q}_{\infty}$ and $\lbrace
{\mathcal Q}^s_2, s \geq0 \rbrace$.
We assign ${\mathcal Q}_{\infty}$ the same initial conditions as
${\mathcal Q}_2$ (although in ${\mathcal Q}_{\infty}$ all initial jobs
begin processing at time~$0$). We let ${\mathcal Q}_{\infty}$ and
${\mathcal Q}_2$ have the same arrival process, and for each arrival,
we let the processing time assigned to this arrival to ${\mathcal
Q}_{\infty}$ equal the processing time assigned to this arrival to
${\mathcal Q}_2$.

We now describe the initial conditions and arrival process
for ${\mathcal Q}^s_2$ in terms of an appropriate thinning of the
initial conditions and arrival process of ${\mathcal Q}_{\infty}$,
where the nature of this thinning depends on $Q_{\infty}(s)$, the
number in system at time $s$ in ${\mathcal Q}_{\infty}$.
If $Q_{\infty}(s) < n$, then the initial conditions of ${\mathcal
Q}^s_2$ are to have zero jobs in system, and the arrival process to
${\mathcal Q}^s_2$ is to have zero arrivals on $[0,\infty)$. If
$Q_{\infty}(s) \geq n$, then we select a size-$n$ subset ${\mathcal
C}^s$ of jobs u.a.r. from all subsets of the jobs being processed in
${\mathcal Q}_{\infty}$ at time $s$. Let ${\mathcal C}^s_0$ denote
those jobs in ${\mathcal C}^s$ which were initially in ${\mathcal
Q}_{\infty}$ at time~$0$.
Then the initial conditions of ${\mathcal Q}^s_2$ are as follows. For
each job $J \in{\mathcal C}^s_0$, there is a corresponding job $J'$
initially in ${\mathcal Q}^s_2$, where the initial processing time of
$J'$ in ${\mathcal Q}^s_2$ equals the initial processing time of $J$ in
${\mathcal Q}_{\infty}$. There are no other initial jobs in~${\mathcal
Q}^s_2$. The arrival process to ${\mathcal Q}^s_2$ on $(0,s]$ is as
follows. For each job $J$ that arrives to ${\mathcal Q}_{\infty}$ (and
thus to ${\mathcal Q}_2$) on $(0,s]$, say at time $\tau$, there is a
corresponding arrival $J'$ to ${\mathcal Q}^s_2$ at time $\tau$ if and
only if $J \in{\mathcal C}^s \setminus{\mathcal C}^s_0$. In this
case, the processing time assigned to $J'$ in ${\mathcal Q}^s_2$ equals
the processing time assigned to $J$ in ${\mathcal Q}_{\infty}$. There
are no other arrivals to ${\mathcal Q}^s_2$ on $(0,s]$. We let
${\mathcal Q}^s_2$, ${\mathcal Q}_{\infty}$ and ${\mathcal Q}_2$ have
the same arrival process on $(s,\infty)$, and for each arrival, we let
the processing time assigned to this arrival to ${\mathcal Q}^s_2$
equal the processing time assigned to this arrival to ${\mathcal
Q}_{\infty}$ (and thus ${\mathcal Q}_2$).

We claim that our coupling of ${\mathcal Q}_{\infty}$ to
${\mathcal Q}_2$ and construction of ${\mathcal Q}^s_2$ ensure that
${\mathcal Q}^s_2$ and ${\mathcal Q}_2$ satisfy the conditions of
Lemma \ref{qom}. Indeed, for each job initially in ${\mathcal
Q}^s_2$, there is a distinct corresponding job initially in ${\mathcal
Q}_2$ with the same initial processing time. Also, for each job that
arrives to ${\mathcal Q}^s_2$, there is a distinct corresponding job
that arrives to ${\mathcal Q}_2$ at the same time with the same
processing time.
Thus w.p.1 $Q^s_2(t)$, the number in system at time $t$ in ${\mathcal
Q}^s_2$, satisfies
%
\begin{equation}
\label{qdom1} Q_2(t) \geq Q^s_2(t) \qquad\mbox{for
all } s,t \geq0.
\end{equation}
We now complete the proof of Theorem \ref{lbound1}.\eject
\begin{pf*}{Proof of Theorem \ref{lbound1}}
Since ${\mathcal Q}_{\infty}$ is initialized with its stationary
measure (see \cite{Tak62}), it follows from the basic properties of
the $M/\mathit{GI}/\infty$ queue (see \cite{Tak62}) that $\pr( Q_{\infty
}(s) \geq n) = \pr( Z \geq n )$, and conditionally on the event
$\lbrace Q_{\infty}(s) \geq n \rbrace$, the set of remaining
processing times (at time $s$) of those jobs being processed in
${\mathcal Q}_{\infty}$ at time $s$ are drawn i.i.d. distributed as
$R(S)$. Thus conditionally on the event $\lbrace Q_{\infty}(s) \geq n
\rbrace$, one has that $|{\mathcal C}^s| = n$, and the set of
remaining processing times (at time $s$, in ${\mathcal Q}_{\infty}$)
of those jobs belonging to ${\mathcal C}^s$ is drawn i.i.d. distributed
as $R(S)$.

By construction the number of jobs initially in ${\mathcal
Q}^s_2$ at time $0$ \textit{plus} the number of jobs that arrive to
${\mathcal Q}^s_2$ on $(0,s]$ is at most $n$. Thus all jobs initially
in ${\mathcal Q}^s_2$ at time $0$ and all jobs that arrive to
${\mathcal Q}^s_2$ on $(0,s]$ begin processing immediately in
${\mathcal Q}^s_2$, as if ${\mathcal Q}^s_2$ were an infinite-server
queue. It follows from our construction that conditionally on the event
$\lbrace Q_{\infty}(s) \geq n \rbrace$, the set of remaining
processing times (at time $s$) of the $n$ jobs in ${\mathcal Q}^s_2$ at
time $s$ equals the set of remaining processing times (at time $s$, in
${\mathcal Q}_{\infty}$) of those jobs belonging to ${\mathcal C}^s$,
and are thus drawn i.i.d. distributed as $R(S)$.

Let us fix some $s,t$ s.t. $0 \leq s \leq t$. Recall that
$V^j_{i}$ denotes the length of the $j$th renewal interval in process
$N_i(t), j \geq1, i = 1,\ldots,n$.
It follows from our construction that conditionally on the event
$\lbrace Q_{\infty}(s) \geq n \rbrace$, we may set the remaining
processing time (at time $s$) of the job on server $i$ in ${\mathcal
Q}^s_2$ at time $s$ equal to $V^1_{i}$. We can also set the processing
time of the $j$th job assigned to server $i$ in ${\mathcal Q}^s_2$
(after time $s$) equal to $V^{j+1}_{i}$.
Under this coupling the total number of jobs that depart from server
$i$ in ${\mathcal Q}^s_2$ during $[s,t]$ is at most $N_i(t-s)$, and
therefore the total number of departures from ${\mathcal Q}^s_2$ during
$[s,t]$ is at most $\sum_{i=1}^n N_i(t-s)$, independent of the arrival
process to ${\mathcal Q}^s_2$ on $[s,t]$. By the memoryless and
stationary increments properties of the Poisson process, we may let the
arrival process to ${\mathcal Q}^s_2$ on $[s,t]$ equal $A(v)_{0 \leq v
\leq t-s}$. Combining the above, we find that for all $x > 0$, $\pr(
Q^s_2(t) - n > x) \geq\pr( Z \geq n ) \pr( A(t-s) - \sum_{i=1}^n
N_i(t-s) > x )$. Observing that $s$ was general, we may
then take the supremum of the above bound over all $s \in[0,t]$, and
combine with (\ref{qdom1}) to complete the proof of the theorem. The
corresponding steady-state result then follows from the fact that
monotonic sequences have limits and the definition of weak convergence.
\end{pf*}

\section{\texorpdfstring{Tightness and proof of Theorem \protect\ref{tightc}}
{Tightness and proof of Theorem 1}}\label{tightsecc}
In this section, we prove Theorem~\ref{tightc}. We note that it
follows almost immediately from Theorem \ref{ubound1} and well-known
tightness results from the literature (see \cite{B99}, Theorem 14.6,
\cite{W02}, Theorem 7.2.3) that for any \textit{fixed} $T \geq0$,
$\lbrace n^{-{1/2}} (Q^n(t) - n )^+_{0 \leq t \leq T},
n \geq1 \rbrace$ is tight in the space $D[0,T]$ under the $J_1$
topology; see Section \ref{weaksubsec} for details. The challenge
is that when analyzing $\lbrace n^{-{1/2}} (Q^n(\infty
)-n )^+, n \geq1\rbrace$, one does not have the luxury of bounded
time intervals. In particular, to apply Theorem \ref{ubound1}, we
must show tightness of a supremum taken
over an infinite time horizon. For this reason, most standard weak
convergence type results and arguments from the literature (see \cite
{W02}) break down, and cannot immediately be applied. Instead, we will
relate the supremum appearing in the right-hand side (r.h.s.) of
Theorem \ref{ubound1} to the steady-state waiting time in an
appropriate $G/D/1$ queue with stationary (as opposed to i.i.d.)
inter-arrival times. We will then apply known results from the
literature, in particular \cite{Stight99}, to show that under the H--W
scaling this sequence of steady-state waiting times, properly
normalized, is tight.

Suppose that assumptions H--W and $T_0$ hold. Let $A_n(t)
\stackrel{\Delta}{=} A(\lambda_n t)$. In light of Theorem \ref
{ubound1}, it suffices to prove that $\lbrace n^{-{1/2}} \sup_{
t \geq0} (A_n(t) - \sum_{i=1}^n N_i(t) ),\break n \geq1 \rbrace$
is tight.
Let ${\mathsf A^0_n}(t)$ denote an ordinary renewal process with
renewal distribution $A\lambda_n^{-1}$, independent of $\lbrace
N_i(t), i=1,\ldots,n \rbrace$. Note that we may construct $A_n(t)$
and $A^0_n(t)$ on the same probability space so that $A_n(t) \leq1 +
A^0_n(t)$ for all $t \geq0$. It thus suffices to demonstrate the
tightness of $\lbrace n^{-{1/2}} \sup_{ t \geq0}
(A^0_n(t) - \sum_{i=1}^n N_i(t) ),\allowbreak n \geq1 \rbrace$.

Let $\lbrace A^1_i, i \geq1 \rbrace$ denote a countably
infinite sequence of r.v.s drawn i.i.d. distributed as $A$, independent
of $\lbrace N_i(t), i=1,\ldots,n \rbrace$. Note that since $A^0_n(t)
- \sum_{i=1}^n N_i(t)$ only increases at jumps of $A^0_n(t)$, we may
construct $A^0_n(t),\break \sum_{i=1}^n N_i(t)$, and $\lbrace A^1_i, i \geq
1 \rbrace$ on the same probability space so that
%
\begin{equation}
\label{tightenit}\quad n^{-{1/2}} \sup_{ t \geq0}
\Biggl(A^0_n(t) - \sum_{i=1}^n
N_i(t) \Biggr)= n^{-{1/2}}\sup_{k \geq0} \Biggl(
k - \sum_{i=1}^n N_i\Biggl(
\lambda_n^{-1} \sum_{j=1}^k
A^1_j \Biggr) \Biggr).
\end{equation}

We now show that
%
\begin{equation}
\label{tightenitt2} \Biggl\lbrace n^{-{1/2}}\sup_{k \geq0}
\Biggl( k - \sum_{i=1}^n N_i
\Biggl( \lambda_n^{-1} \sum_{j=1}^k
A^1_j \Biggr) \Biggr), n \geq1 \Biggr\rbrace
\end{equation}
is tight, which (by the above) will imply Theorem \ref{tightc}.
Fortunately, the tightness of such sequences of suprema has already
been addressed in the literature, in the context of steady-state
waiting times in a $G/G/1$ queue, with stationary inter-arrival times,
in heavy-traffic. In particular, note that for $M \geq1$, $\sup_{0
\leq k \leq M} ( k - \sum_{i=1}^n N_i( \lambda_n^{-1} \sum_{j=1}^k
A^1_j ) )$ corresponds to the waiting time of the
$(M+1)$st arrival to a $G/D/1$ queue, initially empty, with all
processing times equal to 1, and the $k$th inter-arrival time equal to
\[
\sum_{i=1}^n N_i \Biggl(
\lambda_n^{-1} \sum_{j=1}^{M-k}
A^1_j, \lambda_n^{-1} \sum
_{j=1}^{M-k+1} A^1_j
\Biggr),\qquad k \leq M.
\]

Recall that $\sum_{i=1}^n N_i(t)_{t \geq0}$ has the same
distribution (on the process level) as $\sum_{i=1}^n N_i
(t-s,t )_{0 \leq s \leq t}$ (see \cite{Cox70}), and $\lbrace
A^1_i, i \geq1 \rbrace$ are i.i.d. It follows that for all $M \geq
1$, $\sup_{0 \leq k \leq M} ( k - \sum_{i=1}^n N_i( \lambda
_n^{-1} \sum_{j=1}^k A^1_j ) )$ also has the same distribution as
the waiting time of the $(M+1)$st arrival to a $G/D/1$ queue, initially
empty, with all processing times equal to 1, and the $k$th
inter-arrival time equal to
\[
\sum_{i=1}^n N_i \Biggl(
\lambda_n^{-1} \sum_{j=1}^{k-1}
A^1_j, \lambda_n^{-1} \sum
_{j=1}^{k} A^1_j
\Biggr),\qquad k \geq1.
\]
For this queueing model, in which the sequence of inter-arrival times
is stationary, one can ask whether there is a meaningful notion of
steady-state waiting time, whose distribution would naturally coincide
with that of
\[
\lim_{M \rightarrow\infty} \sup_{0 \leq k \leq M} \Biggl( k - \sum
_{i=1}^n N_i\Biggl(
\lambda_n^{-1} \sum_{j=1}^k
A^1_j \Biggr) \Biggr) = \sup_{k
\geq0}
\Biggl( k - \sum_{i=1}^n N_i
\Biggl( \lambda_n^{-1} \sum_{j=1}^k
A^1_j \Biggr) \Biggr).
\]
Furthermore, should one examine a sequence of such queues in heavy
traffic, one can ask whether the corresponding sequence of steady-state
waiting times, properly normalized, is tight.

Note that as (\ref{tightenitt2}) is such a sequence, we are
left to answer exactly this question. Fortunately, sufficient
conditions for tightness of such a sequence are given in \cite
{Stight99}. In particular, as we will show, from the results of \cite
{Stight99} (in the notation of \cite{Stight99}), we have the following:
%
\begin{theorem}\label{statwait2}
Suppose that for all sufficiently large $n$, $\lbrace\zeta_{n,i}, i
\geq1 \rbrace$ is a stationary, countably infinite sequence of r.v.
Let $a_n \stackrel{\Delta}{=} \E[ \zeta_{n,1} ]$, and $W_{n,k}
\stackrel{\Delta}{=} \sum_{i=1}^k \zeta_{n,i}$. Further assume that
$a_n < 0, \lim_{n \rightarrow\infty} a_n = 0$, and there exist
$C_1,C_2 < \infty$ and $\varepsilon> 0$ s.t. for all sufficiently large $n$:
\begin{longlist}
\item
$\E[ | W_{n,k} - k a_n |^{2 + \varepsilon} ] \leq C_1 k^{1 +
{\varepsilon}/{2}}$ for all $k \geq1$;
\item$\pr( \max_{i=1,\ldots,k} (W_{n,i} - i a_n) > x )
\leq C_2 k^{1 + {\varepsilon}/{2}} x^{- (2 + \varepsilon)}$ for all $k
\geq1$ and \mbox{$x > 0$}.
\end{longlist}
Then $\lbrace|a_n| \sup_{k \geq0} W_{n,k}, n \geq1 \rbrace$ is tight.
\end{theorem}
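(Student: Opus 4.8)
The plan is to recognize $\sup_{k \geq 0} W_{n,k}$ as the stationary waiting time in a single-server queue fed by the stationary sequence of net inputs $\lbrace \zeta_{n,i}, i \geq 1 \rbrace$, and then to check that hypotheses (i)--(ii) are exactly what is needed to invoke the heavy-traffic tightness criterion of \cite{Stight.99}. Concretely, by the classical correspondence between Lindley-type recursions and suprema of random walks (see \cite{B.65}), and using stationarity of $\lbrace \zeta_{n,i} \rbrace$ to reverse the increments, $\sup_{0 \leq k \leq M} W_{n,k}$ is distributed as the waiting time of the $(M+1)$st customer in an initially empty single-server queue, so that $\sup_{k \geq 0} W_{n,k}$ is the (a.s.\ finite under the stated hypotheses) Loynes stationary limit of these waiting times as $M \rightarrow \infty$. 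The family indexed by $n$ is thus a sequence of stationary waiting times whose traffic intensity tends to $1$ as $a_n \uparrow 0$, and $|a_n|$ is precisely the heavy-traffic space normalization; the assertion therefore reduces to verifying the conditions of \cite{Stight.99} for $\lbrace \zeta_{n,i} \rbrace$ with drift parameter $a_n$.

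I would carry out that verification in two steps. First, hypothesis (i) --- the bound $\E[|W_{n,k} - k a_n|^{2 + \epsilon}] \leq C_1 k^{1 + \epsilon/2}$ with constants uniform in $n$ --- supplies the uniform control on the centered partial sums that underlies the functional-limit part of \cite{Stight.99}: it yields tightness of the diffusively rescaled partial-sum processes $t \mapsto |a_n|\big( W_{n, \lfloor t |a_n|^{-2} \rfloor} - \lfloor t |a_n|^{-2} \rfloor a_n \big)$ on each compact time interval, along with the uniform integrability of the relevant squared increments. Second, hypothesis (ii) --- the maximal inequality $\pr\big( \max_{i \leq k}(W_{n,i} - i a_n) > x \big) \leq C_2 k^{1 + \epsilon/2} x^{-(2+\epsilon)}$, again with constants uniform in $n$ --- is the ingredient that lets one pass from a supremum over a bounded horizon to the supremum over all $k$ uniformly in $n$: combined with the drift term $-k a_n$, which for $k \gtrsim |a_n|^{-2}$ outgrows the $k^{1/2}$ scale of the fluctuations, it forces the part of $\sup_{k \geq 0} W_{n,k}$ coming from $k > T|a_n|^{-2}$ to be negligible relative to $|a_n|^{-1}$ in probability, uniformly in $n$, as $T \rightarrow \infty$. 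Together these give tightness of $|a_n| \sup_{k \geq 0} W_{n,k}$, which is the conclusion \cite{Stight.99} delivers under these hypotheses.

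The main obstacle is the infinite horizon: one must show not merely that $\sup_{k \geq 0} W_{n,k}$ is a.s.\ finite for each $n$, but that the normalized family is tight \emph{uniformly in $n$}, even though the Loynes limit is approached ever more slowly as the traffic intensity approaches $1$. I expect to handle this via a dyadic block decomposition, writing $\sup_{k \geq 0} W_{n,k} = \max_{m \geq 0} \sup_{2^m T |a_n|^{-2} \leq k < 2^{m+1} T |a_n|^{-2}} W_{n,k}$ together with a residual block near the origin: on the $m$th block the centered maximum is controlled by a union bound through hypothesis (ii), while the drift contributes roughly $-2^m T |a_n|^{-1}$ after normalization, so the probability that the $m$th block exceeds level $|a_n|^{-1} x$ is both summable in $m$ and small in $T$, uniformly in $n$; the residual block near the origin is absorbed by the compact-interval tightness from the first step. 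The only additional point needing care is the increment reversal used for the Lindley identity, which is immediate from stationarity of $\lbrace \zeta_{n,i} \rbrace$. In short, conditions (i)--(ii) are engineered so that each clause of the criterion of \cite{Stight.99} is met, and beyond this translation no new probabilistic input is required.
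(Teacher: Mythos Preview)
Your approach is essentially the same as the paper's: both invoke Theorem~1 of \cite{Stight.99}, and your dyadic block decomposition is exactly the mechanism underlying that result. One point the paper handles explicitly that you gloss over: the hypotheses of \cite{Stight.99} Theorem~1 are not literally (i)--(ii) here --- the cited result states its maximal inequality as $\pr(\max_{i\leq k}(W_{n,i}-ia_n)>x)\leq C_2\,\E[|W_{n,k}-ka_n|^{2+\epsilon}]\,x^{-(2+\epsilon)}$ and carries an additional assumption $\pr(\lim_k W_{n,k}=-\infty)=1$ --- so the paper traces through the proof in \cite{Stight.99} to verify that assumption (iii) is never actually used and that your (ii) suffices at the single place the original (ii) is invoked; your sketch implicitly handles this by reproducing the dyadic argument under (i)--(ii) directly, but you should acknowledge the discrepancy rather than assert the clauses simply ``match''.
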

\begin{pf}
The proof follows from Theorem 1 of \cite{Stight99}, and is deferred
to the \hyperref[app]{Appendix}.
\end{pf}

To verify that the assumptions of Theorem \ref{statwait2} hold for
\[
\Biggl\lbrace n^{-{1/2}}\sup_{k \geq0} \Biggl( k - \sum
_{i=1}^n N_i\Biggl(
\lambda_n^{-1} \sum_{j=1}^k
A^1_j \Biggr) \Biggr),n \geq1 \Biggr\rbrace,
\]
we will rely
on a technical result from \cite{B99}, which gives a bound on the
supremum of a general random walk in terms of bounds on its increments.
In particular, the following is shown in \cite{B99}, Theorem 10.2:
%
\begin{lemma}\label{billingsleylemma}
Suppose $k < \infty$, $X_1,X_2,\ldots,X_k$ is a sequence of general
(possibly dependent and not identically distributed) random variables,
$S_j \stackrel{\Delta}{=}\break \sum_{i=1}^j X_i$ and
$M_k = \max_{j \leq k} |S_j|$. Further suppose that there exist real
numbers $\alpha> \frac{1}{2}$, \mbox{$\beta\geq0$}, and a sequence of
nonnegative numbers $u_1,u_2,\ldots,u_k$ s.t.
for all $0 \leq i \leq j \leq k$ and $x > 0$,
\[
\pr\bigl( | S_j - S_i | \geq x \bigr) \leq x^{- 4 \beta}
\biggl( \sum_{i < l \leq j} u_l
\biggr)^{2 \alpha}.
\]
Then there exists a finite constant $K_{\alpha,\beta}$, depending
only on $\alpha$ and $\beta$, s.t. for all $x > 0$,
\[
\pr( M_k \geq x ) \leq K_{\alpha,\beta} x^{- 4 \beta} \biggl(
\sum_{0 < l \leq k} u_l \biggr)^{2 \alpha}.
\]
\end{lemma}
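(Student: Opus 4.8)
The statement is Theorem 10.2 of \cite{B.99}, so in principle it may simply be cited; for a self-contained argument the plan is to carry out the chaining proof underlying it, organized as an induction on the number of increments $k$. Write $\sigma:=\sum_{l=1}^k u_l$; the target is $\pr(M_k\ge x)\le K x^{-4\beta}\sigma^{2\alpha}$ with a constant $K=K_{\alpha,\beta}$ to be fixed only at the very end (the case $\sigma=0$ forces $S_j\equiv 0$ and is trivial, and $k=1$ follows from $\pr(|X_1|\ge x)\le u_1^{2\alpha}x^{-4\beta}$). For a block of consecutive increments $B=\{a+1,\dots,b\}$, writing $w(B):=\sum_{a<l\le b}u_l$, the hypothesis with $i=a,\ j=b$ gives $\pr(|S_b-S_a|\ge y)\le y^{-4\beta}w(B)^{2\alpha}$; in particular $\pr(|X_l|\ge y)\le y^{-4\beta}u_l^{2\alpha}$. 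Two elementary consequences of $2\alpha\ge 1$ will be used repeatedly for nonnegative reals: $\sum_l a_l^{2\alpha}\le(\sum_l a_l)^{2\alpha}$, and $\sum_l a_l^{2\alpha}\le(\max_l a_l)^{2\alpha-1}\sum_l a_l$.

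First I would build a binary decomposition of $\{1,\dots,k\}$. A block $B$ with at least two increments and $w(B)>\epsilon_0\sigma$ (for a small $\epsilon_0=\epsilon_0(\alpha,\beta)$ chosen later) is processed as follows: if no single increment of $B$ carries more than $\tfrac14 w(B)$, split $B$ at the first point where the cumulative weight exceeds $\tfrac12 w(B)$, producing two non-empty children of weight $\le\tfrac34 w(B)$; otherwise peel the heaviest increment $\{m\}$ of $B$ off as a leaf and recurse on the (possibly empty) surrounding blocks, each of weight $<\tfrac34 w(B)$. A block is a leaf once it is a singleton or has weight $\le\epsilon_0\sigma$. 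Every block at depth $i$ then has weight $\le 2(\tfrac34)^i\sigma$, so the tree has depth at most $r_0:=\lceil\log_{4/3}(1/\epsilon_0)\rceil$, and the depth-$i$ blocks partition a subset of $\{1,\dots,k\}$. Telescoping $S_j-S_0$ along the path from the root to the leaf containing $j$, and noting that each step changes $S$ by the increment over a depth-$i$ block (at most twice such an increment in the peeled case), yields the pointwise bound
$$M_k\;\le\;2\sum_{i=1}^{r_0}\eta_i\;+\;\eta^{\mathrm{sm}}\;+\;\eta^{\mathrm{sg}},$$
where $\eta_i:=\max_{B\text{ at depth }i}|S_{\mathrm{end}(B)}-S_{\mathrm{start}(B)}|$, $\eta^{\mathrm{sm}}$ is the largest within-block maximum over non-singleton leaves (all of weight $\le\epsilon_0\sigma$), and $\eta^{\mathrm{sg}}:=\max\{|X_m|:\{m\}\text{ a singleton leaf}\}$.

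I would then bound each piece by a union bound over the relevant blocks together with the two inequalities above. For $\eta_i$: $\pr(\eta_i\ge y)\le y^{-4\beta}\sum_{B\text{ at depth }i}w(B)^{2\alpha}\le y^{-4\beta}\big(2(\tfrac34)^i\sigma\big)^{2\alpha-1}\sigma$, which decays geometrically in $i$ exactly because $2\alpha-1>0$. For $\eta^{\mathrm{sm}}$: apply the induction hypothesis to each non-singleton leaf (legitimate since, once the root has been processed, every leaf has fewer than $k$ increments), giving $\pr(\eta^{\mathrm{sm}}\ge y)\le K y^{-4\beta}(\epsilon_0\sigma)^{2\alpha-1}\sigma$. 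For $\eta^{\mathrm{sg}}$: $\pr(\eta^{\mathrm{sg}}\ge y)\le y^{-4\beta}\sum_m u_m^{2\alpha}\le y^{-4\beta}\sigma^{2\alpha}$. Distributing $x$ among the $r_0+2$ pieces by a fixed rule (say mass $\tfrac12$ split equally among the $\eta_i$, and $\tfrac14$ to each of $\eta^{\mathrm{sm}},\eta^{\mathrm{sg}}$) and summing the resulting geometric series produces
$$\pr(M_k\ge x)\;\le\;\big(A_{\alpha,\beta}+K\,\epsilon_{\alpha,\beta}\big)\,x^{-4\beta}\sigma^{2\alpha},$$
with $A_{\alpha,\beta}<\infty$ depending only on $\alpha,\beta$ (through $r_0$) and $\epsilon_{\alpha,\beta}$ a fixed multiple of $\epsilon_0^{2\alpha-1}$.

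To close the induction I would choose $\epsilon_0=\epsilon_0(\alpha,\beta)$ so small that $\epsilon_{\alpha,\beta}\le\tfrac12$ — possible precisely because $2\alpha-1>0$ makes $\epsilon_0^{2\alpha-1}\to 0$ — and then set $K:=\max\{1,2A_{\alpha,\beta}\}$; the order of these choices matters ($\epsilon_0$, hence $A_{\alpha,\beta}$ and $\epsilon_{\alpha,\beta}$, are fixed from $\alpha,\beta$ alone, and only afterwards is $K$ chosen), which is what makes $K$ depend on $\alpha,\beta$ only. The main obstacle, and the reason a plain two-way bisection does not suffice, is that a single increment may carry almost the entire weight $\sigma$: such an increment cannot be halved, so a naive recursion incurs a multiplicative loss (a factor $2^{4\beta}$ or $3^{4\beta}$) that cannot be absorbed when $4\beta$ is large relative to $2\alpha-1$. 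Peeling heavy increments off as singletons confines this difficulty to the single term $\eta^{\mathrm{sg}}$, whose contribution is merely additive, which is what lets the bookkeeping above go through.
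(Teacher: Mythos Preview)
The paper does not prove this lemma at all; it simply quotes it as Theorem~10.2 of \cite{B.99} and moves on. So your opening remark is exactly right: the intended ``proof'' is a citation.

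Your self-contained argument is a correct chaining proof and is close in spirit to Billingsley's own. A few small points worth tightening: your weight bound $w(B)\le 2(\tfrac34)^i\sigma$ at depth $i$ is indeed valid even when singleton leaves are included among the depth-$i$ blocks (a peeled singleton inherits at most its parent's weight $\le(\tfrac34)^{i-1}\sigma=\tfrac43(\tfrac34)^i\sigma$), which is what justifies the factor $2$ you wrote; make that explicit, since otherwise a reader may think singletons violate the geometric decay. Likewise, the pointwise bound $M_k\le 2\sum_i\eta_i+\eta^{\mathrm{sm}}+\eta^{\mathrm{sg}}$ follows once you include singletons in the $\eta_i$ (so each telescoping step across a peeled block is bounded by two depth-$i$ increments), with $\eta^{\mathrm{sm}}$ or $\eta^{\mathrm{sg}}$ appearing only as the terminal leaf contribution; say this in one line. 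Finally, your ``equal split'' of $x$ among the $r_0$ levels produces a harmless factor $(4r_0)^{4\beta}$ in $A_{\alpha,\beta}$; this is absorbed because $r_0$ is fixed by $\epsilon_0(\alpha,\beta)$ before $K$ is chosen, exactly as you note. With those clarifications the sketch is complete.
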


We will also use frequently the inequality
%
\begin{equation}
\label{convexlemma} (x_1 + x_2)^r
\leq2^{r-1} x_1^r + 2^{r-1}
x_2^r \qquad\mbox{for all } r \geq1 \mbox{ and }
x_1,x_2 \geq0,
\end{equation}
which follows from the convexity of $f(x) \stackrel{\Delta}{=} x^r$,
$r \geq1$.

Before proceeding with the proof of Theorem \ref{tightc}, we
establish two more auxiliary results. The first bounds the moments of
the sum of $n$ i.i.d. zero-mean r.v. in terms of the moments of the
individual r.v.s and $n$, and is proven in \cite{W60}.
%
\begin{lemma}\label{csumbound}
For all $r \geq2$, there exists $C_{r} < \infty$ (depending only on
$r$) s.t. for all r.v. $X$ satisfying $\E[X] = 0$ and $\E[|X|^{r}]
< \infty$,
if $\lbrace X_i, i \geq1 \rbrace$ is a sequence of i.i.d. r.v.s
distributed as $X$, then for all $k \geq1$,
\[
\E\Biggl[\Biggl|\sum_{i=1}^k
X_i\Biggr|^{r}\Biggr] \leq C_{r} k^{ {r}/{2}} \E
\bigl[|X|^{r}\bigr].
\]
\end{lemma}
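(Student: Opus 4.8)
The plan is to deduce this from the moment form of Khintchine's inequality after a standard symmetrization, producing a constant $C_r$ that depends only on $r$.

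\emph{Symmetrization.} First I would introduce an independent copy $\lbrace X_i', i \geq 1 \rbrace$ of $\lbrace X_i, i \geq 1 \rbrace$ and set $\tilde{X}_i \stackrel{\Delta}{=} X_i - X_i'$. Since $\E[X_i'] = 0$, writing $\sum_{i=1}^k X_i = \E\big[ \sum_{i=1}^k \tilde{X}_i \,\big|\, \lbrace X_j \rbrace \big]$ and applying conditional Jensen gives $\E\big[ \big| \sum_{i=1}^k X_i \big|^r \big] \leq \E\big[ \big| \sum_{i=1}^k \tilde{X}_i \big|^r \big]$, while \eqref{convexlemma} gives $\E[|\tilde{X}|^r] \leq 2^r \E[|X|^r]$. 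Each $\tilde{X}_i$ is symmetric and the $\tilde{X}_i$ are independent, so if $\lbrace \epsilon_i \rbrace$ are i.i.d.\ $\pm 1$ signs (each sign equally likely) independent of everything, then $\lbrace \tilde{X}_i \rbrace$ and $\lbrace \epsilon_i \tilde{X}_i \rbrace$ have the same joint law; hence it suffices to bound $\E\big[ \big| \sum_{i=1}^k \epsilon_i \tilde{X}_i \big|^r \big]$.

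\emph{Conditioning on the $\tilde{X}_i$.} Conditional on $\lbrace \tilde{X}_i \rbrace$, the sum $\sum_{i=1}^k \epsilon_i \tilde{X}_i$ has mean zero and moment generating function $\prod_{i=1}^k \cosh(t \tilde{X}_i) \leq \exp\big( \tfrac{t^2}{2} \sum_{i=1}^k \tilde{X}_i^2 \big)$, so it is conditionally sub-Gaussian with variance proxy $\sum_{i=1}^k \tilde{X}_i^2$; integrating the resulting Gaussian tail bound $\pr\big( |\,\cdot\,| \geq x \,\big|\, \lbrace \tilde{X}_i \rbrace \big) \leq 2\exp\big( -x^2 / (2\sum_i \tilde{X}_i^2) \big)$ against $r x^{r-1}\,dx$ yields a constant $B_r < \infty$, depending only on $r$, with $\E\big[ \big| \sum_{i=1}^k \epsilon_i \tilde{X}_i \big|^r \,\big|\, \lbrace \tilde{X}_i \rbrace \big] \leq B_r \big( \sum_{i=1}^k \tilde{X}_i^2 \big)^{r/2}$. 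Since $r/2 \geq 1$, convexity of $x \mapsto x^{r/2}$ gives $\big( \sum_{i=1}^k \tilde{X}_i^2 \big)^{r/2} \leq k^{r/2-1} \sum_{i=1}^k |\tilde{X}_i|^r$, so taking expectations and using the symmetrization bound, $\E\big[ \big( \sum_{i=1}^k \tilde{X}_i^2 \big)^{r/2} \big] \leq k^{r/2-1} \cdot k \cdot 2^r \E[|X|^r] = 2^r k^{r/2} \E[|X|^r]$. Chaining the three displays proves the lemma with $C_r \stackrel{\Delta}{=} 2^r B_r$.

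\emph{Main obstacle.} The only step that is not completely elementary is the conditional Khintchine bound with a constant that does not depend on $k$ (or on the law of $X$); the sub-Gaussian argument above supplies it cleanly, the point being that the variance proxy $\sum_i \tilde{X}_i^2$ carries exactly the right scaling and the tail-integration constant $B_r$ depends only on $r$. Everything else — the symmetrization, the passage from the $\ell^2$-norm of the $\tilde{X}_i$ to their $\ell^r$-norm via \eqref{convexlemma}, and the bookkeeping of the $2^r$ factors — is routine. A self-contained alternative, essentially the route of \cite{W.60}, bypasses symmetrization and instead proves the bound by induction on $k$, expanding $\E[|\sum_{i=1}^k X_i|^r]$ around $\E[|\sum_{i=1}^{k-1} X_i|^r]$ and controlling the cross terms using $\E[X_k] = 0$ together with Hölder's inequality; this keeps the $k^{r/2}$ scaling explicit at the cost of a more intricate recursion.
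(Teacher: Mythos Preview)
Your proof is correct. The paper, however, does not supply its own argument for this lemma: it simply states the result and cites \cite{W.60}. So there is no in-paper proof to compare against beyond that reference.

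Your route---symmetrization followed by the sub-Gaussian form of Khintchine's inequality, then the convexity bound $(\sum_i \tilde X_i^2)^{r/2} \le k^{r/2-1}\sum_i |\tilde X_i|^r$---is the standard modern presentation of the Marcinkiewicz--Zygmund inequality and is genuinely different from the inductive argument of \cite{W.60} that you correctly identify at the end. The symmetrization approach is cleaner conceptually and makes the dependence of $C_r$ on $r$ alone completely transparent; the induction in \cite{W.60} avoids introducing auxiliary randomness but requires more careful bookkeeping of cross terms. Either suffices here, and your write-up is self-contained and complete.
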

Second, we prove a bound for the central moments of a pooled
equilibrium renewal process.
%
\begin{lemma}\label{binomial2}
Let $X$ denote any nonnegative r.v. s.t. $\E[X] = \mu^{-1} \in
(0,\infty)$, and $\E[X^r] < \infty$ for some $r \geq2$. Let
$\lbrace Z^e_i (t), i \geq1 \rbrace$ denote a set of i.i.d.
equilibrium renewal processes with renewal distribution $X$. Then there
exists $C_{X,r} < \infty$ (depending only on $X$ and $r$) s.t. for all
$n \geq1$ and $t \geq0$,
%
\begin{equation}
\label{focus1} \E\Biggl[ \Biggl| \sum_{i=1}^n
Z^e_i(t) - \mu n t \Biggr|^r \Biggr] \leq
C_{X,r} \bigl( 1 + (n t)^{{r/2}} \bigr).
\end{equation}
\end{lemma}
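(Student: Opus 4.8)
The plan is to reduce the left side of (\ref{focus1}) to an $r$th-moment bound for a sum of $n$ i.i.d.\ zero-mean random variables, and then to estimate the individual summands separately in the small-$t$ and large-$t$ regimes.

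The starting point is the elementary fact that an equilibrium renewal process has a linear renewal function, i.e.\ $\E[Z^e_i(t)] = \mu t$ for all $t \ge 0$ (this is exactly the sense in which such a process is ``stationary''). Hence $\sum_{i=1}^n Z^e_i(t) - \mu n t = \sum_{i=1}^n \big( Z^e_i(t) - \mu t \big)$ is a sum of $n$ i.i.d.\ mean-zero r.v., and I would bound its $r$th moment by a Rosenthal-type inequality (obtainable from Lemma \ref{csumbound} via a standard truncation argument): for some constant $\bar C_r$,
\[
\E\Big[ \big| \textstyle\sum_{i=1}^n ( Z^e_i(t) - \mu t ) \big|^r \Big] \le \bar C_r \Big( n \, \E\big[ | Z^e_1(t) - \mu t |^r \big] + \big( n \, \rVar( Z^e_1(t) ) \big)^{r/2} \Big).
\]
Given this, (\ref{focus1}) follows once one shows $\E[ |Z^e_1(t) - \mu t|^r ] \le C_{X,r} ( t + t^{r/2} )$ and $\rVar( Z^e_1(t) ) \le C_X t$ for all $t \ge 0$, since the right side above is then at most $\bar C_r C_{X,r}( nt + nt^{r/2} ) + \bar C_r C_X^{r/2} (nt)^{r/2}$, and each of $nt$, $nt^{r/2}$ is bounded by $1 + (nt)^{r/2}$ (using $n \le n^{r/2}$ and $r \ge 2$).

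To obtain these single-process estimates uniformly in $t$, I would split at a threshold $t_0 = t_0(X) > 0$. Writing the interevent times of $Z^e_1$ as $R(X), X_1, X_2, \dots$ (with $R(X)$ as in (\ref{rezz}), and $X_1, X_2, \dots$ i.i.d.\ copies of $X$ independent of $R(X)$), nonnegativity gives $\{ Z^e_1(t) \ge k \} \subseteq \{ R(X) \le t \} \cap \{ \sum_{j=1}^{k-1} X_j \le t \}$ for $k \ge 1$, so by independence and $\pr( R(X) \le t ) \le \mu t$,
\[
\pr\big( Z^e_1(t) \ge k \big) \le \mu t \, \pr\big( X \le t \big)^{k-1}.
\]
Choosing $t_0$ so that $\rho := \pr(X \le t_0) < 1$ (possible since $\pr(X \le t) \downarrow \pr(X=0) < 1$ as $t \downarrow 0$), summing the geometric series yields $\E[ Z^e_1(t)^r ] \le C_{X,r} t$ on $[0,t_0]$, hence, via (\ref{convexlemma}), $\E[|Z^e_1(t) - \mu t|^r] \le C_{X,r} t$ and $\rVar(Z^e_1(t)) \le C_X t$ there. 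On $[t_0, 1]$ one has $\E[|Z^e_1(t) - \mu t|^r] \le 2^{r-1}( \E[Z^e_1(1)^r] + \mu^r )$, a finite constant (renewal counts over a bounded interval have all moments finite whenever $\pr(X=0) < 1$), which is $O(t^{r/2})$ since $t$ is bounded below. For $t \ge 1$ I would use the inversion $\{ Z^e_1(t) \ge m \} = \{ R(X) + \sum_{j=1}^{m-1} X_j \le t \}$ together with a Rosenthal bound on the partial sums $\sum_{j=1}^{m-1}(X_j - \mu^{-1})$ (finiteness of $\E[X^r]$ suffices) to get $\pr( |Z^e_1(t) - \mu t| \ge x ) \le C_{X,r}( t^{r/2} x^{-r} + x^{-r/2} )$, and then integrate to obtain $\E[|Z^e_1(t) - \mu t|^r] \le C_{X,r} t^{r/2}$. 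Combining the three ranges gives the two required single-process estimates, completing the proof via the previous paragraph.

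The main obstacle is the regime where $n$ is large while $t$ is small (say $nt$ of order one): there the naive Marcinkiewicz--Zygmund bound of Lemma \ref{csumbound} applied to $\sum_{i=1}^n ( Z^e_i(t) - \mu t )$ overshoots by a factor of order $n^{r/2 - 1}$, since it cannot exploit the fact that the superposition $\sum_i Z^e_i(t)$ is nearly Poisson with small mean $\mu n t$; recovering the correct $(nt)^{r/2}$ scaling is precisely what forces the use of a Rosenthal-type inequality, which separates the ``single large term'' and ``Gaussian'' contributions, both for the $n$-fold superposition and for the interevent partial sums. A secondary nuisance is that only $\E[X^r] < \infty$ is assumed --- in particular no regularity of $X$ near the origin (no analogue of the $T_0$ condition (\ref{t3})) --- so the small-$t$ estimates must be squeezed out purely from the independence of $R(X)$ from the later interevent times, as above, rather than from a density-type bound; one must also be mildly careful with the possibility $\pr(X=0) > 0$, which is harmless since $\pr(R(X)=0)=0$ always.
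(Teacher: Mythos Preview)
Your overall strategy---Rosenthal's inequality plus single-process moment bounds---is sound and genuinely different from the paper's route. The paper never invokes Rosenthal; it stays with the Marcinkiewicz--Zygmund bound of Lemma~\ref{csumbound} and handles the delicate regime $nt=O(1)$ by a three-way case split: for $t\le 2/n$ it uses a crude bound $C(1+(nt)^r)$ obtained by Bernoulli thinning (Lemma~\ref{binomial1}); for $t\in[2/n,2]$ it partitions the $n$ processes into $\lfloor nt\rfloor$ blocks of about $1/t$ processes each, applies the crude bound inside each block (where the effective ``$nt$'' is $O(1)$), and then applies Marcinkiewicz--Zygmund across blocks; for $t\ge 2$ it applies Marcinkiewicz--Zygmund directly and imports the single-process bound $\E[|Z^e_1(t)-\mu t|^r]\le Ct^{r/2}$ from \cite{CHL.79}. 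Your use of Rosenthal collapses the block trick into one clean inequality, at the price of needing both $\E[|Z^e_1(t)-\mu t|^r]\le C(t+t^{r/2})$ and $\rVar(Z^e_1(t))\le Ct$; your small-$t$ geometric tail argument for these is correct and neat.

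There is, however, a real gap in your $t\ge 1$ step. The tail bound you write, $\pr(|Z^e_1(t)-\mu t|\ge x)\le C(t^{r/2}x^{-r}+x^{-r/2})$, does \emph{not} integrate against $r x^{r-1}$ to give a finite $r$th moment: the term $x^{-r/2}$ contributes $\int^\infty r x^{r/2-1}\,dx=\infty$, and even the term $t^{r/2}x^{-r}$ contributes $Crt^{r/2}\int x^{-1}\,dx$, producing at best an extra $\log t$ factor (which, already for $n=1$, destroys the conclusion). This is not an accident: an $r$th-moment Markov bound on the interarrival partial sums cannot by itself recover the $r$th moment of the counting process without loss. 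The estimate you need, $\sup_{t\ge 1}t^{-r/2}\E[|Z^e_1(t)-\mu t|^r]<\infty$ under only $\E[X^r]<\infty$, is exactly the paper's Theorem~\ref{extendedrenewalCLT200} (Chow--Hsiung--Lai), extended to equilibrium processes in Lemma~\ref{extendedrenewalCLT2}; its proof goes through a stopped-sum representation rather than tail integration. Since the paper simply cites this, you may too, but your sketch as written does not establish it. A minor aside: Rosenthal's inequality is not really ``obtainable from Lemma~\ref{csumbound} via a standard truncation argument''---it is a separate (standard) result and should be cited as such.
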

\begin{pf}
The proof is deferred to the \hyperref[app]{Appendix}.
\end{pf}\eject

With the above bounds at our disposal, we now complete the proof of
Theorem~\ref{tightc}.
\begin{pf*}{Proof of Theorem \ref{tightc}}
In the notation of Theorem \ref{statwait2}, let
\begin{eqnarray*}
\zeta_{n,k} &\stackrel{\Delta} {=}& 1 - \sum
_{i=1}^n N_i\Biggl( \lambda
_n^{-1} \sum_{j=1}^{k-1}
A^1_j, \lambda_n^{-1} \sum
_{j=1}^{k} A^1_j \Biggr),
\\
W_{n,k} &\stackrel{\Delta} {=}& k - \sum_{i=1}^n
N_i\Biggl( \lambda_n^{-1} \sum
_{j=1}^{k} A^1_j \Biggr).
\end{eqnarray*}
That $\lbrace\zeta_{n,i}, i \geq1 \rbrace$ is a stationary,
countably infinite sequence of r.v. follows from the stationary
increments property of the equilibrium renewal process.
Since $\E[\sum_{i=1}^n N_i(t)] = n t \mu$ for all $t \geq0$, it
follows that $a_n \stackrel{\Delta}{=} \E[ \zeta_{n,1}] = 1 - \frac
{n}{\lambda_n} = - \frac{B}{n^{{1/2}} - B } < 0$, and $\lim_{n
\rightarrow\infty} a_n = 0$.
Thus we need only verify assumptions (i) and (ii) of Theorem \ref
{statwait2}. Since $\E[A^{2 + \varepsilon}],\E[S^{2 + \varepsilon}] <
\infty$ for some $\varepsilon> 0$ by the $T_0$ assumptions, we may fix
some $r > 2$ s.t. $\E[A^r],\E[S^r] < \infty$. Note that
%
\begin{eqnarray}
&&
\E\bigl[ | W_{n,k} - k a_n |^{r} \bigr] \nonumber\\
&&\qquad= \E
\Biggl[\Biggl| \sum_{i=1}^n N_i
\Biggl(\lambda_n^{-1} \sum_{j=1}^{k}
A^1_j\Biggr) - \frac{k n}{\lambda_n} \Biggr|^r \Biggr]
\nonumber
\\
&&\qquad\leq \E\Biggl[ \Biggl( \Biggl| \sum_{i=1}^n
N_i\Biggl(\lambda_n^{-1} \sum
_{j=1}^{k} A^1_j\Biggr) - \mu
\frac{n}{\lambda_n} \sum_{j=1}^{k}
A^1_j \Biggr| + \Biggl|\mu\frac{n}{\lambda_n} \sum
_{j=1}^{k} A^1_j -
\frac{k
n}{\lambda_n} \Biggr| \Biggr)^r \Biggr]
\nonumber
\\
\label{ainside1}
&&\qquad\leq 2^{r-1} \E\Biggl[\Biggl|\sum_{i=1}^n
N_i\Biggl(\lambda_n^{-1} \sum
_{j=1}^{k} A^1_j\Biggr) - \mu
\frac{n}{\lambda_n} \sum_{j=1}^{k}
A^1_j\Biggr|^r \Biggr]
\\
\label{ainside2}
&&\qquad\quad{} + 2^{r-1} \E\Biggl[\Biggl|\mu\frac{n}{\lambda_n} \sum
_{j=1}^{k} A^1_j -
\frac{k n}{\lambda_n}\Biggr|^r \Biggr] \qquad\mbox{by (\ref{convexlemma})}.
\end{eqnarray}
We now bound (\ref{ainside1}). By Lemmas \ref{csumbound}--\ref
{binomial2}, there exist $C_{S,r}, C_r < \infty$ independent of $n$,
and $k$ s.t. $\E[|\sum_{i=1}^n N_i(\lambda_n^{-1} \sum_{j=1}^{k}
A^1_j) - \mu\frac{n}{\lambda_n} \sum_{j=1}^{k}
A^1_j|^r ]$ is at most
%
\begin{eqnarray}\label{aabbc1}
&& C_{S,r} + C_{S,r} \biggl(\frac{n}{\lambda_n}
\biggr)^{{r/2}} \E\Biggl[ \Biggl( \sum_{j=1}^k
A^1_j \Biggr)^{{r/2}} \Biggr] \qquad\mbox{by Lemma
\ref{binomial2}}
\nonumber
\\
&&\qquad\leq C_{S,r} + C_{S,r} \biggl(\frac{n}{\lambda_n}
\biggr)^{{r/2}} \E\Biggl[ \Biggl( \Biggl|\sum_{j=1}^k
\bigl( A^1_j - \mu^{-1} \bigr)\Biggr| + k \mu
^{-1} \Biggr)^{{r/2}} \Biggr]
\nonumber
\\
&&\qquad\leq C_{S,r} + C_{S,r} \biggl(\frac{n}{\lambda_n}
\biggr)^{{r/2}} \Biggl(2^{{r/2}-1} \E\Biggl[\Biggl|\sum
_{j=1}^k \bigl( A^1_j - \mu
^{-1} \bigr)\Biggr|^{ {r}/{2} } \Biggr] \nonumber\\
&&\qquad\quad\hspace*{148pt}{}+ 2^{{r/2}-1} \bigl(k \mu
^{-1}\bigr)^{{r/2}} \Biggr)
\nonumber
\\
&&\qquad\leq C_{S,r} + 2^{{r/2}-1} C_{S,r} \biggl(
\frac{n}{\lambda
_n}\biggr)^{{r/2}} \Biggl(\E^{{1/2}} \Biggl[\Biggl|\sum
_{j=1}^k \bigl(A^1_j
- \mu^{-1}\bigr)\Biggr|^{r} \Biggr] + \bigl(k \mu^{-1}
\bigr)^{{r/2}} \Biggr)
\\
&&\qquad\quad \mbox{since } \E[X] \leq\E^{{1/2}}\bigl[X^2\bigr] \mbox{ for
any nonnegative r.v. } X
\nonumber
\\
&&\qquad\leq C_{S,r} + 2^{ {r}/{2} - 1} C_{S,r} \biggl(
\frac{n}{\lambda
_n}\biggr)^{{r/2}} \bigl( \bigl( C_r
k^{{r/2}} \E\bigl[\bigl|A - \mu^{-1}\bigr|^r \bigr]
\bigr)^{{1/2}} + \bigl(k \mu^{-1}\bigr)^{{r/2}} \bigr)
\nonumber
\\
&&\qquad\quad \mbox{by Lemma \ref{csumbound}}
\nonumber
\\
&&\qquad\leq C'_1 k^{{r/2}}\nonumber
\end{eqnarray}
for some finite constant $C'_1$ independent of $n$ and $k$, since $ \E
[|A - \mu^{-1}|^r ] < \infty$, and $\lim_{n \rightarrow
\infty} \frac{n}{\lambda_n} = 1$.

We now bound (\ref{ainside2}).
%
\begin{eqnarray}\label{aabbc2}
&&
\E\Biggl[\Biggl|\mu\frac{n}{\lambda_n} \sum_{j=1}^{k}
A^1_j - \frac{k
n}{\lambda_n}\Biggr|^r \Biggr] \nonumber\\
&&\qquad=
\biggl(\frac{n}{\lambda_n}\biggr)^r \mu^{r} \E\Biggl[\Biggl|\sum
_{j=1}^k \bigl( A^1_j
- \mu^{-1} \bigr) \Biggr|^r \Biggr]
\nonumber\\[-8pt]\\[-8pt]
&&\qquad\leq \biggl( C_r \biggl(\frac{n}{\lambda_n}\biggr)^r
\mu^{r} \E\bigl[\bigl|A - \mu^{-1}\bigr|^r \bigr] \biggr)
k^{{r/2}} \qquad\mbox{by Lemma \ref{csumbound}}
\nonumber\\
&&\qquad\leq C''_1 k^{{r/2}} \qquad\mbox{for
some finite constant } C''_1
\mbox{ independent of } n \mbox{ and } k.\nonumber
\end{eqnarray}
Using (\ref{aabbc1}) to bound (\ref{ainside1}) and (\ref{aabbc2}) to
bound (\ref{ainside2}), it follows that assumption (i) of Theorem
\ref{statwait2} holds for the finite constant $C_1 \stackrel{\Delta
}{=} 2^{r-1} (C'_1 + C''_1)$. We now apply Lemma \ref
{billingsleylemma} to show that assumption (ii) holds as well.
In the notation of Lemma \ref{billingsleylemma}, let $S_{n,i}
\stackrel{\Delta}{=} W_{n,i} - i a_n$ for $i \geq0$, and $M_{n,k}
\stackrel{\Delta}{=}
\max_{i \leq k} | W_{n,i} - i a_n |$ for $k \geq0$. Then for all $n$,
$0 \leq i \leq j$ and $x > 0$,
\begin{eqnarray*}
\pr\bigl( | S_{n,j} - S_{n,i} | \geq x \bigr) &=& \pr\bigl(
|S_{n,j-i}| \geq x \bigr) \qquad\mbox{by stationary increments}
\\
&=& \pr\bigl( \bigl|W_{n,j-i} - (j- i) a_n\bigr| \geq x \bigr)
\\
&\leq& C_1 (j-i)^{ {r}/{2} } x^{-r} \qquad\mbox{by
Markov's inequality}
\\
&\leq& \bigl( (C_1+1) (j-i) \bigr)^{ {r}/{2} } x^{-r}.
\end{eqnarray*}
Thus for all $n$ and $k \geq1$, we may apply Lemma \ref
{billingsleylemma} (in the notation of Lemma~\ref{billingsleylemma})
with $\beta\stackrel{\Delta}{=} \frac{r}{4}, \alpha\stackrel
{\Delta}{=} \frac{r}{4}$, and $u_l \stackrel{\Delta}{=} (C_1 +
1)$ for $1 \leq l \leq k$, to find that
there exists a constant $K_r < \infty$ (depending only on $r$) s.t.
for all $x > 0$,
%
\begin{equation}
\label{ass4holds} \pr\Bigl( \max_{i=1,\ldots,k} (W_{n,i} - i
a_n) > x \Bigr) \leq K_r (C_1+1)^{ {r}/{2} }
k^{{r/2}} x^{-r}.
\end{equation}
It follows that assumption (ii) of Theorem \ref{statwait2} holds as
well, with (in the notation of Theorem \ref{statwait2}) $C_2
\stackrel{\Delta}{=} K_r (C_1+1)^{ {r}/{2} }$, $\varepsilon
\stackrel{\Delta}{=} r - 2$. Combining the above, we find that all
assumptions of Theorem \ref{statwait2} hold, and thus we may apply
Theorem \ref{statwait2} to find that
\[
\Biggl\lbrace\frac{B}{n^{{1/2}} - B} \sup_{k \geq0} \Biggl( k -
\sum
_{i=1}^n N_i\Biggl(
\lambda_n^{-1} \sum_{j=1}^k
A^1_j \Biggr) \Biggr), n \geq1 \Biggr\rbrace
\]
is tight. Combining with (\ref{tightenit}) completes the proof of
Theorem \ref{tightc}.
\end{pf*}
%
\section{\texorpdfstring{Large deviation results and proof of Theorem \protect\ref{ld1}}{Large deviation results and proof of Theorem 2}}\label{ldsecc}
In this section, we complete the proofs of our main results. We proceed
by combining our upper and lower bounds with several known weak
convergence results for (pooled) renewal processes and the suprema of
Gaussian processes. Recall that a Gaussian process on $\reals$ is a
stochastic process $Z(t)_{t \geq0}$ s.t.
for any finite set of times $t_1,\ldots,t_k$, the vector $ (
Z(t_1),\ldots,Z(t_k) )$ has a Gaussian distribution. A Gaussian
process $Z(t)$ is known to be uniquely determined by its mean function
$\E[Z(t)]$ and covariance function $\E[Z(s) Z(t)]$, and refer the
reader to \cite{D44,IR78,A90,marcus2006markov}, and the references therein for details on
existence, continuity, etc.

\subsection{Preliminary weak convergence results}\label{weaksubsec}
In this subsection we review several weak convergence results for
renewal processes, and apply them to $A_n(t)$ and $\sum_{i=1}^n
N_i(t)$. For an excellent review of weak convergence, and the
associated spaces (e.g., $D[0,T]$) and topologies/metrics (e.g.,
uniform, $J_1$), the reader is referred to \cite{W02}. Let ${\mathcal
A}(t)$ denote the w.p.1 continuous Gaussian process s.t. $\E[{\mathcal
A}(t)] = 0, \E[{\mathcal A}(s){\mathcal A}(t)] = \mu c^2_A \min
(s,t)$, namely ${\mathcal A}(t)$ is a driftless Brownian motion. Then
from the well-known functional central limit theorem (FCLT) for renewal
processes (see \cite{B99}, Theorem 14.6), we have the following:
%
\begin{theorem}\label{renewalfclt}
For any $T \in[0,\infty)$, the sequence of processes $\lbrace\lambda
_n^{-{1/2}} ( A_n(t) - \lambda_n \mu t )_{0 \leq t
\leq T}, n \geq1 \rbrace$ converges weakly to
${\mathcal A}(t)_{0 \leq t \leq T}$ in the space $D[0,T]$ under the
$J_1$ topology.
\end{theorem}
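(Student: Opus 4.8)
The plan is to derive this from the classical functional CLT for ordinary renewal processes (via Donsker's theorem and the continuity of the first-passage/inverse map), after disposing of two inessential features of the statement: that $A(t)$ is an \emph{equilibrium} rather than an ordinary renewal process, and that the time argument and normalization are expressed through the non-integer parameter $\lambda_n$. First I would record that the $T_0$ assumptions together with $\pr(A=0)=0$ make $A$ a strictly positive r.v.\ with $\E[A]=\mu^{-1}$ (using H-W) and finite variance $\sigma^2_A = c^2_A\mu^{-2}$, so that all hypotheses of the renewal FCLT are in force.

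Next I would realize $A(t)$ on an explicit probability space in a way that exposes its ordinary-renewal core. Take $U$ distributed as $R(A)$ and $\lbrace A_i, i\ge 2\rbrace$ i.i.d.\ as $A$, independent of $U$, and let $\hat{A}(t)$ be the ordinary renewal process whose successive inter-event times are $A_2,A_3,\dots$. Then the counting process with event epochs $U, U+A_2, U+A_2+A_3,\dots$ is an equilibrium renewal process with renewal distribution $A$, and a short check gives $A(t) = I(U\le t) + \hat{A}\big((t-U)^+\big)$ for all $t\ge 0$. For $\hat{A}$ I would invoke \cite{B.99} Theorem 14.6 (equivalently, Donsker applied to the partial sums of $\lbrace A_i\rbrace$ together with continuity of the inverse map, see \cite{W.02} Theorem 7.3.2) along the deterministic sequence $\lambda_n = n - Bn^{\frac{1}{2}}\uparrow\infty$, to get that $g_n(t)\stackrel{\Delta}{=}\lambda_n^{-\frac{1}{2}}\big(\hat{A}(\lambda_n t)-\lambda_n\mu t\big)_{0\le t\le T}$ converges weakly in $D[0,T]$ under $J_1$ to a driftless Brownian motion with variance parameter $\sigma^2_A\mu^3 = c^2_A\mu$ per unit time. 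Matching the constant is the one place to be slightly careful: the inverse of a Brownian motion with drift $\mu^{-1}$ and diffusion coefficient $\sigma^2_A$ has drift $\mu$ and diffusion coefficient $\mu^3\sigma^2_A = \mu c^2_A$, so this limit is precisely ${\mathcal A}(t)_{0\le t\le T}$.

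Finally I would transfer the conclusion from $\hat{A}$ to $A$. Writing $s_n(t)\stackrel{\Delta}{=}(t - U\lambda_n^{-1})^+$, the identity above gives, uniformly on $[0,T]$,
\begin{align*}
\lambda_n^{-\frac{1}{2}}\big(A(\lambda_n t)-\lambda_n\mu t\big) &= g_n\big(s_n(t)\big) + \lambda_n^{\frac{1}{2}}\mu\big(s_n(t)-t\big) + \lambda_n^{-\frac{1}{2}}I(U\le\lambda_n t),
\end{align*}
and since $|s_n(t)-t|\le U\lambda_n^{-1}$ the last two terms are bounded in absolute value by $\mu U\lambda_n^{-\frac{1}{2}}+\lambda_n^{-\frac{1}{2}}$, which tends to $0$ w.p.1 uniformly on $[0,T]$. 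Since $s_n\to\mathrm{id}$ w.p.1 uniformly (and, as $U$ is independent of $\lbrace A_i, i\ge 2\rbrace$, $s_n$ is independent of $g_n$), one gets $(g_n,s_n)\Rightarrow({\mathcal A},\mathrm{id})$ jointly; and because ${\mathcal A}$ is w.p.1 continuous, the composition map $(x,\theta)\mapsto x\circ\theta$ is continuous at $({\mathcal A},\mathrm{id})$ ($J_1$ coincides with the uniform topology on the subspace of continuous functions). The continuous mapping theorem then yields $g_n\circ s_n\Rightarrow{\mathcal A}$ in $D[0,T]$ under $J_1$, and combined with the ``converging together'' estimate above this proves the theorem.

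I do not expect a deep obstacle, since every ingredient is classical; the step requiring the most care is the last one, namely checking that the equilibrium delay $U$ is asymptotically negligible \emph{uniformly in $t$}. A naive attempt to bound $\hat{A}(\lambda_n t)-\hat{A}\big((\lambda_n t-U)^+\big)$ directly by a renewal count over a window of random length $U$ does not obviously give a uniform-in-$t$ bound, which is why I would route the argument through the composition-with-time-changes lemma rather than crude renewal estimates. Identifying the limiting covariance as $\mu c^2_A\min(s,t)$ and handling the non-integer scaling parameter $\lambda_n$ (e.g.\ by sandwiching between $\lfloor\lambda_n\rfloor$ and $\lceil\lambda_n\rceil$) are routine once the right classical statements are cited.
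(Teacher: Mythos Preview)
Your argument is correct, but it is far more elaborate than what the paper does. The paper does not give a proof of this theorem at all: it simply states that the result ``follows from the well-known Functional Central Limit Theorem (FCLT) for renewal processes (see \cite{B.99} Theorem 14.6)'' and moves on. In other words, the paper treats this as a black-box citation.

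Your proposal, by contrast, actually carries out the derivation: you reduce the equilibrium renewal process to an ordinary one via the decomposition $A(t)=I(U\le t)+\hat{A}\big((t-U)^+\big)$, invoke the classical renewal FCLT for $\hat{A}$, and then transfer the conclusion via a random-time-change/converging-together argument. This is a legitimate and careful way to justify the statement, and in fact you are being more scrupulous than the paper, since Billingsley's Theorem 14.6 is stated for ordinary renewal processes and the equilibrium delay does require the asymptotic-negligibility step you supply. What you gain is a self-contained argument; what the paper gains is brevity by outsourcing the result to a standard reference.
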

We now give a weak convergence result for $\sum_{i=1}^n N_i(t)$, which
is stated in \cite{W02} (see Theorem 7.2.3) and formally proven in
\cite{Whitt85} (see Theorem 2).
%
\begin{theorem}\label{renewalfclt2}
There exists a w.p.1 continuous Gaussian process ${\mathcal D}(t)$ s.t.
$\E[{\mathcal D}(t)] = 0, \E[{\mathcal D}(s){\mathcal D}(t)] = \E
[ ( N_1(s) - \mu s ) ( N_1(t) - \mu t )]$ for all
$s,t \geq0$. Furthermore, for any $T \in[0,\infty)$, the sequence of
processes $\lbrace n^{-{1/2}} ( \sum_{i=1}^n N_i(t) - n
\mu t )_{0 \leq t \leq T}, n \geq1 \rbrace$ converges weakly to
${\mathcal D}(t)_{0 \leq t \leq T}$ in the space $D[0,T]$ under the
$J_1$ topology.
\end{theorem}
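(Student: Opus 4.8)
The statement is precisely Theorem 2 of \cite{Whitt.85} (see also Theorem 7.2.3 of \cite{W.02}), so one could simply invoke it; but let me describe how I would organize the argument and where the difficulty sits. The structural fact to exploit first is that an equilibrium renewal process has $\E[N_i(t)] = \mu t$ for \emph{every} $t \geq 0$, not merely asymptotically, and has stationary increments. Consequently $M_i(t) \stackrel{\Delta}{=} N_i(t) - \mu t$ is a genuinely centered, r.c.l.l. process, the $M_i$ are i.i.d., and $Y_n(t) \stackrel{\Delta}{=} n^{-\frac{1}{2}}\big(\sum_{i=1}^n N_i(t) - n\mu t\big) = n^{-\frac{1}{2}}\sum_{i=1}^n M_i(t)$ is exactly a normalized partial sum of i.i.d. random elements of $D[0,T]$. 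I would prove the weak convergence in the usual two steps: convergence of finite-dimensional distributions, and tightness (which, since the candidate limit will have continuous paths, amounts to $C$-tightness).

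For existence and sample-path continuity of $\mathcal{D}$: the proposed covariance $R(s,t) \stackrel{\Delta}{=} \E[M_1(s)M_1(t)]$ is finite under the $T_0$ moment assumption $\E[S^{2+\epsilon}] < \infty$ (standard renewal theory) and is positive semidefinite, being the covariance of an honest process, so Kolmogorov's extension theorem produces a mean-zero Gaussian process $\mathcal{D}$ with covariance $R$. Continuity I would obtain from the Kolmogorov--Chentsov criterion: by stationary increments $\E[(\mathcal{D}(t)-\mathcal{D}(s))^2] = \rVar\big(N_1(t-s)\big)$, and since $\mathcal{D}$ is Gaussian, $\E[|\mathcal{D}(t)-\mathcal{D}(s)|^{2m}] = c_m\big(\rVar(N_1(t-s))\big)^m$; it therefore suffices to bound $\rVar(N_1(u)) \leq C u$ on $[0,T]$ and take $m \geq 2$. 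For $u$ bounded away from $0$ this is Lemma\ \ref{binomial2} with $n=1$; for $u$ near $0$ this is where $T_0$ condition (\ref{t3}) enters, since the chance of two or more renewals of the equilibrium process in $[0,u]$ is at most $\pr(R(S) \leq u)\pr(S \leq u) = O(u^2)$, whence $\E[N_1(u)^2] = \mu u + O(u^2)$.

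Convergence of finite-dimensional distributions is then routine: for fixed $0 \leq t_1 < \cdots < t_k \leq T$ the vectors $\big(M_i(t_1),\ldots,M_i(t_k)\big)$, $i \geq 1$, are i.i.d., centered, with finite covariance matrix $\big(R(t_j,t_l)\big)$ (again using $\E[S^{2+\epsilon}] < \infty$), so the multivariate central limit theorem gives $n^{-\frac{1}{2}}\sum_{i=1}^n\big(M_i(t_1),\ldots,M_i(t_k)\big) \Rightarrow \mathcal{N}\big(0,(R(t_j,t_l))\big)$, matching the finite-dimensional laws of $\mathcal{D}$.

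The tightness step is the one I expect to be the real obstacle, and it is the substance of \cite{Whitt.85}. The naïve route via a moment/oscillation criterion such as Theorem 13.5 of \cite{B.99} is borderline and in fact insufficient here: using stationary increments and Lemma\ \ref{csumbound} one obtains $\E[|Y_n(t) - Y_n(s)|^{r}] \leq C_r\,\E[|N_1(t-s) - \mu(t-s)|^{r}]$ uniformly in $n$, but this quantity is only of order $(t-s)$ for small $t-s$ (it is dominated by the event that a single one of the $n$ processes fires exactly once), and a linear-in-$h$ bound on $r$th increment moments does not meet the strictly super-linear requirement of the $D$-space tightness criteria, no matter how large $r$ is. The correct argument, which I would follow from \cite{Whitt.85}, exploits the renewal structure itself --- relating the counting processes to the partial-sum (renewal-time) random walks and obtaining tightness of the superposition through the Donsker FCLT for those walks together with a continuous-mapping/inversion argument --- and it is again precisely condition (\ref{t3}) of $T_0$ that rules out the pathological accumulation of renewals near the origin that would otherwise destroy tightness. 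Combining finite-dimensional convergence with tightness yields $Y_n \Rightarrow \mathcal{D}$ in $D[0,T]$ under $J_1$.
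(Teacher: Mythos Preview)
Your proposal is correct and takes the same approach as the paper: both simply invoke Theorem 2 of \cite{Whitt.85} (equivalently Theorem 7.2.3 of \cite{W.02}), with the paper merely noting that the $T_0$ assumptions (\ref{t1}) and (\ref{t3}) supply the required technical conditions. Your additional sketch of the underlying argument --- in particular the identification of tightness as the genuine obstacle, the failure of the naive moment criterion due to the merely linear-in-$h$ increment bound, and the role of condition (\ref{t3}) --- is accurate and goes beyond what the paper provides, but it is supplementary commentary rather than a different route.
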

We note that the $T_0$ assumptions \ref{t1} and \ref{t3}, which
guarantee that $\E[S^{2 + \varepsilon}] < \infty$ and $\limsup_{x
\downarrow0} x^{-1} \pr(S \leq x) < \infty$, ensure that the
technical conditions required to apply \cite{W02}, Theorem 7.2.3,
namely that $E[S^2] < \infty$ and $\limsup_{x \downarrow0} x^{-1}
( \pr(S \leq x) - \pr(S = 0) ) < \infty$, hold.

Then from Theorems \ref{renewalfclt}--\ref
{renewalfclt2}, we have the following:
%
\begin{lemma}\label{renewalfclt3}
For any fixed $T \geq0$, $\lbrace n^{-{1/2}} ( A_n(t) -
\sum_{i=1}^n N_i(t) )_{0 \leq t \leq T}, n \geq1 \rbrace$
converges weakly to
$ ( {\mathcal A}(t) - {\mathcal D}(t) - B \mu t )_{0 \leq t
\leq T}$ in the space $D[0,T]$ under the $J_1$ topology.
\end{lemma}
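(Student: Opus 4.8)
The plan is to decompose the centered, scaled process into three pieces whose limiting behavior is already understood, and then recombine via the continuous mapping theorem. Concretely, for $0 \leq t \leq T$ write
$$
n^{-\frac12}\Big( A_n(t) - \sum_{i=1}^n N_i(t) \Big)
= n^{-\frac12}\big( A_n(t) - \lambda_n \mu t \big)
- n^{-\frac12}\Big( \sum_{i=1}^n N_i(t) - n \mu t \Big)
+ n^{-\frac12}(\lambda_n - n)\mu t .
$$
Since $\lambda_n = n - B n^{\frac12}$, the last term equals the deterministic continuous function $-B\mu t$ \emph{exactly}, for every $n$, so it contributes precisely the drift appearing in the claimed limit.

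First I would identify the limits of the first two terms. Theorem~\ref{renewalfclt} gives $\lambda_n^{-\frac12}\big( A_n(t) - \lambda_n \mu t \big)_{0 \leq t \leq T} \weak {\mathcal A}(t)_{0 \leq t \leq T}$; multiplying by the deterministic scalars $(\lambda_n/n)^{\frac12}$, which converge to $1$ because $\lambda_n/n \to 1$, and invoking the converging-together lemma (a weakly convergent sequence rescaled by constants tending to $1$ retains its limit), I obtain $n^{-\frac12}\big( A_n(t) - \lambda_n \mu t \big)_{0 \leq t \leq T} \weak {\mathcal A}(t)_{0 \leq t \leq T}$. The second term converges weakly to ${\mathcal D}(t)_{0 \leq t \leq T}$ directly by Theorem~\ref{renewalfclt2}. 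Next I would upgrade these two marginal statements to a joint one for the pair of processes in $D[0,T] \times D[0,T]$: by construction $A(t)$ and $\lbrace N_i(t), 1 \leq i \leq n \rbrace$ are independent, hence so are the two coordinate processes for each $n$, and joint weak convergence to the product of the two (mutually independent) limit laws follows from the marginal convergences.

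Finally I would apply the continuous mapping theorem to the subtraction map $(x,y) \mapsto x - y$ from $D[0,T] \times D[0,T]$ to $D[0,T]$, followed by the (everywhere continuous) addition of the fixed continuous function $-B\mu t$. This yields
$$
n^{-\frac12}\Big( A_n(t) - \sum_{i=1}^n N_i(t) \Big)_{0 \leq t \leq T}
\weak \big( {\mathcal A}(t) - {\mathcal D}(t) - B\mu t \big)_{0 \leq t \leq T}
$$
in $D[0,T]$ under the $J_1$ topology, as claimed. The one genuinely delicate point — and the main obstacle — is that subtraction is \emph{not} continuous everywhere on Skorokhod space under $J_1$; it is continuous only at pairs $(x,y)$ whose coordinates share no common discontinuity, in particular at every pair of continuous functions. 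This is exactly why the a.s.\ continuity of the Gaussian limits ${\mathcal A}$ and ${\mathcal D}$, guaranteed by Theorems~\ref{renewalfclt}--\ref{renewalfclt2}, is needed: it places the limit law almost surely in the set of continuity points of the map, so the continuous mapping theorem applies. Everything else in the argument is routine.
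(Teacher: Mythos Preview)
Your argument is correct and follows essentially the same route as the paper: the identical three-term decomposition, followed by an appeal to Theorems~\ref{renewalfclt}--\ref{renewalfclt2}. In fact you supply more detail than the paper does, spelling out the joint convergence via independence and the $J_1$-continuity of subtraction at a.s.\ continuous limits, whereas the paper simply writes the decomposition and asserts that the lemma follows.
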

\begin{pf}
Note that
\begin{eqnarray*}
&&
n^{-{1/2}} \Biggl( A_n(t) - \sum_{i=1}^n
N_i (t) \Biggr)_{0 \leq
t \leq T} \\
&&\qquad= \Biggl( \lambda_n^{{1/2}}
n^{-{1/2}} \bigl( A_n(t) - \lambda_n \mu t \bigr)
\lambda_n^{-{1/2}} \\
&&\qquad\quad\hspace*{4.5pt}{}- \Biggl( \sum_{i=1}^n
N_i(t) - n \mu t \Biggr)n^{-{1/2}}- B \mu t
\Biggr)_{0
\leq t \leq T}.
\end{eqnarray*}
The lemma then follows from Theorems \ref{renewalfclt} and \ref{renewalfclt2}.
\end{pf}
We note that a process very similar to $ ( {\mathcal A}(t) -
{\mathcal D}(t) - B \mu t )_{0 \leq t \leq T}$ was studied in
\cite{Whitt85} as the weak limit of a sequence of queues with
superposition arrival processes. The continuity of the supremum map in
the space $D[0,T]$ under the $J_1$ topology (see \cite{W02}, Theorem
13.4.1), combined with Lemma \ref{renewalfclt3}, implies the
following:
%
\begin{corollary}\label{renewalfclt4}
For any fixed\vspace*{1pt} $T \geq0$,
$\lbrace n^{-{1/2}} \sup_{ 0 \leq t \leq T} (A_n(t) - \sum
_{i=1}^n N_i(t) )$, $n \geq1 \rbrace$ converges weakly to
the r.v. $\sup_{ 0 \leq t \leq T} ( {\mathcal A}(t) - {\mathcal
D}(t) - B \mu t )$.
\end{corollary}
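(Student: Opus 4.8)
The plan is to obtain Corollary~\ref{renewalfclt4} as an immediate application of the continuous mapping theorem to the weak convergence established in Lemma~\ref{renewalfclt3}, taking the supremum functional on $D[0,T]$ as the continuous map.

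First I would record that, since multiplication by the nonnegative scalar $n^{-\frac{1}{2}}$ commutes with the supremum,
$$ n^{-\frac{1}{2}} \sup_{0 \leq t \leq T} \big( A_n(t) - \textstyle\sum_{i=1}^n N_i(t) \big) = \phi\big( Y_n \big), $$
where $\phi : D[0,T] \to \reals$ is defined by $\phi(x) \stackrel{\Delta}{=} \sup_{0 \leq t \leq T} x(t)$ (well defined and finite, since r.c.l.l.\ functions are bounded on compact intervals) and $Y_n(t) \stackrel{\Delta}{=} n^{-\frac{1}{2}}\big( A_n(t) - \sum_{i=1}^n N_i(t) \big)$. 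By Lemma~\ref{renewalfclt3}, $Y_n$ converges weakly to $Z \stackrel{\Delta}{=} \big( {\mathcal A}(t) - {\mathcal D}(t) - B\mu t \big)_{0 \leq t \leq T}$ in $D[0,T]$ under the $J_1$ topology. The remaining ingredient is that $\phi$ is continuous in the $J_1$ topology at every element of $C[0,T]$; this is precisely \cite{W.02} Theorem~13.4.1 (intuitively, $J_1$ convergence to a continuous limit upgrades to uniform convergence, and $\phi$ is $1$-Lipschitz with respect to the uniform metric). Since ${\mathcal A}$ and ${\mathcal D}$ are w.p.1 continuous Gaussian processes by Theorems~\ref{renewalfclt}--\ref{renewalfclt2}, the sample paths of $Z$ are w.p.1 continuous, so $\phi$ is continuous at almost every realization of $Z$. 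The continuous mapping theorem then gives $\phi(Y_n) \weak \phi(Z) = \sup_{0 \leq t \leq T}\big( {\mathcal A}(t) - {\mathcal D}(t) - B\mu t \big)$, which is the assertion of the corollary.

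I do not expect any substantive obstacle here; the two points that merit a line of justification are the interchange of the scalar $n^{-\frac{1}{2}}$ with the supremum, which is immediate, and the continuity of $\phi$ under $J_1$ rather than merely under the uniform metric, which is the cited result of \cite{W.02} and which is only needed at continuous limit points --- guaranteed here by the w.p.1 continuity of ${\mathcal A} - {\mathcal D} - B\mu\cdot$. One could equally first pass from $J_1$ to uniform convergence on the subset of continuous paths and then invoke the Lipschitz property of $\phi$ directly; either route closes the proof.
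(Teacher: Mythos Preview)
Your proposal is correct and follows essentially the same route as the paper: the paper derives the corollary in one sentence by invoking the continuity of the supremum map on $D[0,T]$ under the $J_1$ topology (\cite{W.02} Theorem~13.4.1) together with Lemma~\ref{renewalfclt3}, which is precisely the continuous mapping argument you spell out in more detail.
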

%
\subsection{Preliminary large deviation results}\label{ldsec}
Before proceeding with the remaining proofs, we will need to establish
some results from the theory of large deviations of Gaussian processes
and their suprema. We note that the relationship between the large
deviations of suprema of Gaussian processes and the large deviations of
queueing systems is well known (see \cite{DO95,D05}), and
there is a significant literature studying the large deviations of such
processes (e.g., \cite{DO95,DLS03,DMR98,Deb99,D05}).
We will rely heavily on the following theorem, proven in \cite{D05}
(in a more general form).
%
\begin{theorem}\label{largedev1}
Suppose ${\mathcal Z}(t)$ is a centered, separable Gaussian process
with stationary increments, s.t. $E[{\mathcal Z}^2(t)]$ is a continuous
function of $t$ on $[0,\infty)$,
$\lim_{t \downarrow0} ( E[{\mathcal Z}^2(t)] \log^2(t) ) =
0$ and
$\lim_{t \rightarrow\infty} t^{-1} \E[ {\mathcal Z}^2(t) ] = \sigma
^2 > 0$. Then for any $c > 0$,
\[
\lim_{x \rightarrow\infty} x^{-1} \log\Bigl( \pr\Bigl( \sup
_{t
\geq0} \bigl( {\mathcal Z}(t) - c t \bigr) \geq x \Bigr) \Bigr)
= - \frac{ 2 c
}{\sigma^2}.
\]
\end{theorem}
Also implicit from the discussion in \cite{DO95} (although we include
a short proof) is the following:
%
\begin{theorem}\label{largedev2}
Under the same assumptions as Theorem \ref{largedev1}, for any $c > 0$,
\[
\lim_{x \rightarrow\infty} x^{-1} \log\Bigl( \sup
_{t \geq0} \pr\bigl( {\mathcal Z}(t) - c t > x \bigr) \Bigr) = -
\frac{ 2 c
}{\sigma^2}.
\]
\end{theorem}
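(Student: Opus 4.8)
The plan is to sandwich the exponent between matching upper and lower bounds, the upper bound being immediate from Theorem \ref{largedev1} and the lower bound coming from evaluating the supremum at a single, optimally chosen time.

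For the upper bound, note that for every fixed $t \geq 0$ we have the trivial inclusion $\lbrace \mathcal{Z}(t) - ct > x \rbrace \subseteq \lbrace \sup_{s \geq 0}( \mathcal{Z}(s) - cs ) > x \rbrace$, and hence, taking the supremum over $t$, $\sup_{t \geq 0} \pr\big( \mathcal{Z}(t) - ct > x \big) \leq \pr\big( \sup_{s \geq 0}( \mathcal{Z}(s) - cs ) > x \big)$ for all $x > 0$. Theorem \ref{largedev1} then gives directly that $\limsup_{x \to \infty} x^{-1} \log\big( \sup_{t \geq 0} \pr( \mathcal{Z}(t) - ct > x ) \big) \leq - 2 c \sigma^{-2}$.

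For the lower bound, fix $\epsilon \in (0, \sigma^2)$. For each $t \geq 0$ the r.v.\ $\mathcal{Z}(t)$ is mean-zero Gaussian with variance $v(t) \stackrel{\Delta}{=} \E[\mathcal{Z}^2(t)]$, and since $t^{-1} v(t) \to \sigma^2$ there is a $T_\epsilon < \infty$ with $v(t) \geq (\sigma^2 - \epsilon) t$ for all $t \geq T_\epsilon$ (in particular $v(t) \in (0,\infty)$ for such $t$, so $\mathcal{Z}(t)$ is a genuine nondegenerate Gaussian r.v.). A first-order minimization of the Gaussian exponent $(x + ct)^2 / (2 \sigma^2 t)$ over $t > 0$ identifies $t = x/c$ as the asymptotically optimal time (its value there being $2cx\sigma^{-2}$), so we lower bound the supremum by its value at that point. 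For $x \geq c T_\epsilon$, writing $\overline{\Phi}$ for the standard normal tail and using that $\overline{\Phi}$ is decreasing,
\begin{equation*}
\sup_{t \geq 0} \pr\big( \mathcal{Z}(t) - ct > x \big) \geq \pr\big( \mathcal{Z}(x/c) > 2x \big) = \overline{\Phi}\Big( \frac{2x}{v(x/c)^{1/2}} \Big) \geq \overline{\Phi}\bigg( 2 \Big( \frac{c x}{\sigma^2 - \epsilon} \Big)^{1/2} \bigg).
\end{equation*}
Applying the elementary Gaussian tail bound $\overline{\Phi}(y) \geq (2\pi)^{-1/2} \frac{y}{1 + y^2} e^{-y^2/2}$ with $y = 2 \big( c x / (\sigma^2 - \epsilon) \big)^{1/2}$ and taking logarithms yields $\log \pr\big( \mathcal{Z}(x/c) - x > x \big) \geq - \frac{2 c x}{\sigma^2 - \epsilon} - O(\log x)$ as $x \to \infty$. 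Dividing by $x$, taking $\liminf_{x \to \infty}$, and then letting $\epsilon \downarrow 0$ gives $\liminf_{x \to \infty} x^{-1} \log\big( \sup_{t \geq 0} \pr( \mathcal{Z}(t) - ct > x ) \big) \geq - 2 c \sigma^{-2}$, and combining with the upper bound yields the claimed equality.

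This is essentially a one-point estimate: in contrast with Theorem \ref{largedev1}, the lower direction needs no control of the running supremum of a Gaussian process, since the single well-chosen time $t = x/c$ already realizes the correct exponent, and the upper direction is inherited for free from Theorem \ref{largedev1} via the inclusion of events. The only point requiring a bit of care — and the reason for the $\epsilon$-then-limit structure — is that the hypothesis $t^{-1}\E[\mathcal{Z}^2(t)] \to \sigma^2$ must be used both to guarantee that $v(x/c)$ is finite and nonzero (so the Gaussian tail estimate applies) and to replace $v(x/c)$ by $(\sigma^2 - \epsilon) x / c$ inside the argument of $\overline{\Phi}$.
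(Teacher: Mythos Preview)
Your proof is correct and follows essentially the same approach as the paper: both obtain the upper bound from Theorem~\ref{largedev1} via the trivial inclusion $\sup_t \pr(\cdot) \leq \pr(\sup_t \cdot)$, and both obtain the lower bound by evaluating the supremum at the single time $t = x/c$ and applying a standard Gaussian tail lower bound. The only cosmetic differences are that the paper uses the Abramowitz--Stegun form of the Mills ratio bound and works directly with $v(x/c)$ before passing to the limit, whereas you use the equivalent bound $\overline{\Phi}(y) \geq (2\pi)^{-1/2} \tfrac{y}{1+y^2} e^{-y^2/2}$ and an $\epsilon$-then-limit structure; neither difference is substantive.
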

\begin{pf}
That $\limsup_{x \rightarrow\infty} x^{-1} \log( \sup_{t
\geq0} \pr( {\mathcal Z}(t) - c t > x ) ) \leq-
\frac{ 2 c }{\sigma^2}$ follows immediately from Theorem \ref
{largedev1} and the fact that
$\sup_{t \geq0} \pr( {\mathcal Z}(t) - c t > x ) \leq\pr
( \sup_{t \geq0} ( {\mathcal Z}(t) - c t ) > x )$.\vspace*{2pt}

Letting $t = \frac{x}{c}$, we find that
%
\begin{equation}
\label{aab1} \sup_{t \geq0} \pr\bigl( {\mathcal Z}(t) - c t > x
\bigr) \geq\pr\biggl( {\mathcal Z}\biggl( \frac{x}{c} \biggr) - x > x
\biggr).
\end{equation}
Let $G$ denote a normally distributed r.v. with mean 0 and variance 1.
Then since ${\mathcal Z}(\frac{x}{c})$ is normally distributed with
mean zero, it follows from (\ref{aab1}) that
%
\begin{equation}
\label{aab2} \sup_{t \geq0} \pr\bigl( {\mathcal Z}(t) - c t > x
\bigr) \geq\pr\biggl( G > 2 x \E^{-{1/2}} \biggl[ {\mathcal
Z}^2 \biggl(\frac{x}{c}\biggr) \biggr] \biggr).
\end{equation}
We use the following identity from \cite{AS72}, equation 7.1.13.
Namely, for all $y > 0$,
\[
\pr( G > y ) \geq\bigl( y + \bigl(y^2 + 4\bigr)^{-{1/2}}
\bigr)^{-1} \biggl(\frac{2}{\pi} \biggr)^{{1/2}} \exp\biggl(
- \frac{y^2}{2} \biggr).
\]
Thus
%
\begin{equation}
\label{normalbound1} \pr( G > y ) \geq\exp\biggl( - \frac{y^2}{2} -
y\biggr)
\qquad\mbox{for all sufficiently large } y.
\end{equation}
By assumption, $\lim_{t \rightarrow\infty} t^{-1} \E[ {\mathcal
Z}^2(t) ] = \sigma^2 > 0$, and thus
\[
\lim_{x \rightarrow\infty} 2 x \E^{-{1/2}} \biggl[ {\mathcal
Z}^2 \biggl(\frac{x}{c}\biggr) \biggr]
= \infty.
\]
It thus follows from (\ref{aab2}) and (\ref
{normalbound1}) that for all sufficiently large $x$,
\[
x^{-1} \log\Bigl( \sup_{t \geq0} \pr\bigl( {\mathcal
Z}(t) - c t > x \bigr) \Bigr) \geq- 2 x \E^{-1} \biggl[ {\mathcal
Z}^2 \biggl(\frac{x}{c}\biggr) \biggr] - 2 \E^{-{1/2}}
\biggl[ {\mathcal Z}^2 \biggl(\frac{x}{c}\biggr) \biggr].
\]
Since $\lim_{x \rightarrow\infty} (\frac{x}{c})^{-1} \E[ {\mathcal
Z}^2 (\frac{x}{c})] = \sigma^2$, it follows that
\[
\liminf_{x \rightarrow\infty} x^{-1} \log\Bigl( \sup_{t \geq0}
\pr\bigl( {\mathcal Z}(t) - c t > x \bigr) \Bigr) \geq- \frac
{2c}{\sigma^2},
\]
completing the proof of the theorem.
\end{pf}

In light of Theorem \ref{largedev1}, Theorem \ref{largedev2} can be
interpreted as saying that such a process is ``most likely'' to exceed
a given value $x$ at a particular\vspace*{1pt} time (roughly $\frac{x}{c}$), and
much less likely to exceed that value at any other time; see the
discussion in \cite{DO95}. We note that the duality of Theorems \ref
{largedev1} and \ref{largedev2} coincides with the duality exhibited by
our upper and lower bounds (Theorems \ref{ubound1} and~\ref{lbound1})---a
relationship that we will exploit to prove our large
deviation results.

We are now in a position to apply Theorems \ref
{largedev1} and \ref{largedev2} to ${\mathcal A}(t) - {\mathcal D}(t)$.
%
\begin{corollary}\label{largedev3}
\textup{(i)} $\lim_{x \rightarrow\infty} x^{-1} \log\pr( \sup_{t
\geq0} ( {\mathcal A}(t) - {\mathcal D}(t) - B \mu t ) \geq
x ) = - 2 B ( c^2_A + c^2_S )^{-1}$;

\textup{(ii)} $\lim_{x \rightarrow\infty} x^{-1} \log( \sup_{t \geq
0} \pr( {\mathcal A}(t) - {\mathcal D}(t) - B \mu t > x )
) = - 2 B ( c^2_A + c^2_S )^{-1}$.
\end{corollary}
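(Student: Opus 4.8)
The plan is to apply Theorems~\ref{largedev1} and~\ref{largedev2} directly to the process $\mathcal{Z}(t) \stackrel{\Delta}{=} \mathcal{A}(t) - \mathcal{D}(t)$, with drift parameter $c = B\mu$. For this I must verify that $\mathcal{Z}$ satisfies the hypotheses of those theorems: that it is a (w.p.1 continuous) Gaussian process with stationary increments, that $\E[\mathcal{Z}(t)] = 0$ for all $t \geq 0$, and that $\lim_{t \to \infty} t^{-1}\E[\mathcal{Z}^2(t)]$ exists, is finite, and is strictly positive. The structural parts are essentially bookkeeping. The process $\mathcal{A}$ is a driftless Brownian motion, hence is mean zero, w.p.1 continuous, and has stationary increments. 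The process $\mathcal{D}$ is mean zero and w.p.1 continuous by definition, and since its covariance is $\E[\mathcal{D}(s)\mathcal{D}(t)] = \E\big[(N_1(s) - \mu s)(N_1(t) - \mu t)\big]$ where $N_1$ is an equilibrium renewal process, which has stationary increments and satisfies $\E[N_1(t)] = \mu t$, this covariance necessarily has the form $\tfrac12\big(v(s) + v(t) - v(|t-s|)\big)$ with $v(u) \stackrel{\Delta}{=} \rVar(N_1(u))$; a centered Gaussian process with such a covariance has stationary increments. As $A(t)$ and $\lbrace N_i(t), i = 1,\ldots,n \rbrace$ are independent, so are $\mathcal{A}$ and $\mathcal{D}$, and therefore $\mathcal{Z} = \mathcal{A} - \mathcal{D}$ is again a centered, w.p.1 continuous Gaussian process with stationary increments and $\mathcal{Z}(0) = 0$.

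The only real content is the evaluation of $\sigma^2 \stackrel{\Delta}{=} \lim_{t \to \infty} t^{-1}\E[\mathcal{Z}^2(t)]$. By independence, $\E[\mathcal{Z}^2(t)] = \E[\mathcal{A}^2(t)] + \E[\mathcal{D}^2(t)] = \mu c^2_A t + \rVar(N_1(t))$, the first term being immediate from the definition of $\mathcal{A}$. For the second term I would invoke classical renewal theory: for a renewal process with inter-renewal mean $\mu^{-1}$ and squared coefficient of variation $c^2_S$, one has $t^{-1}\rVar(N_1(t)) \to \mu c^2_S$ as $t \to \infty$ (see \cite{Cox.70}). A little care is required because $N_1$ is an \emph{equilibrium} renewal process rather than an ordinary one; the initial delay $R(S)$ has finite mean under the $T_0$ assumption $\E[S^{2+\epsilon}] < \infty$, and one checks (for instance, by coupling $N_1$ to an ordinary renewal process begun at time $R(S)$) that this initialization affects $\rVar(N_1(t))$ only by $o(t)$, leaving the asymptotic rate unchanged. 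Hence $\sigma^2 = \mu\big(c^2_A + c^2_S\big)$, which is finite and, by the $T_0$ assumption~(\ref{t2}) that $c^2_A + c^2_S > 0$, strictly positive.

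With the hypotheses in hand, Theorem~\ref{largedev1} applied to $\mathcal{Z}$ with $c = B\mu$ yields part~(i): the limit equals $-2c/\sigma^2 = -2B\mu\big/\big(\mu(c^2_A + c^2_S)\big) = -2B(c^2_A + c^2_S)^{-1}$; and Theorem~\ref{largedev2} applied to the same $\mathcal{Z}$ and $c$ yields part~(ii) in precisely the same manner, with the same value. I expect the main obstacle to be the identification of the asymptotic variance rate $\lim_{t \to \infty} t^{-1}\rVar(N_1(t)) = \mu c^2_S$ for the equilibrium renewal process under the weak moment hypotheses of $T_0$ — in particular, justifying that the equilibrium (as opposed to ordinary) initialization does not alter the rate; everything else reduces to quoting Theorems~\ref{largedev1}--\ref{largedev2} together with elementary facts about Brownian motion and equilibrium renewal processes.
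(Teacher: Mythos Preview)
Your approach is essentially the same as the paper's: verify that $\mathcal{A}-\mathcal{D}$ is a centered Gaussian process with stationary increments, compute $\sigma^2=\lim_{t\to\infty}t^{-1}\E[(\mathcal{A}(t)-\mathcal{D}(t))^2]=\mu(c^2_A+c^2_S)$, and then invoke Theorems~\ref{largedev1}--\ref{largedev2} with $c=B\mu$. The only difference is in how the key limit $t^{-1}\rVar(N_1(t))\to\mu c^2_S$ is justified: you appeal to classical renewal asymptotics plus a coupling argument handling the equilibrium initialization, whereas the paper argues via the renewal CLT combined with uniform integrability of $\big(t^{-1/2}(N_1(t)-\mu t)\big)^2$, the latter supplied by Lemma~\ref{binomial2}.
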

\begin{pf}
That ${\mathcal A}(t) - {\mathcal D}(t)$ is a centered, separable
Gaussian process with stationary increments follows from definitions,
the independence of ${\mathcal A}(t)$ and ${\mathcal D}(t)$ and the
fact that both ${\mathcal A}(t)$ and $N_1(t)$ have stationary increments.
The independence of ${\mathcal A}(t)$ and ${\mathcal D}(t)$ implies that
%
\begin{equation}
\label{eqzz1} \E\bigl[ \bigl( {\mathcal A}(t) - {\mathcal D}(t)
\bigr)^2\bigr] = \mu c^2_A t + \E\bigl[
\bigl( N_1(t) - \mu t \bigr)^2\bigr].
\end{equation}
We now prove that
%
\begin{equation}
\label{eqzz2} \lim_{t \rightarrow\infty} t^{-1} \E\bigl[ \bigl( {
\mathcal A}(t) - {\mathcal D}(t) \bigr)^2\bigr] = \mu
\bigl(c^2_A + c^2_S\bigr).
\end{equation}
Let $G_S$ denote a normally distributed r.v. with mean 0 and variance
$\mu c^2_S$. It follows from the well-known\vspace*{-1pt} central limit theorem for
renewal processes (see \cite{Ross96}, Theorem 3.3.5), and the fact
that $h(z) \stackrel{\Delta}{=} z^2$ is a continuous function, that
the sequence of r.v.s $\lbrace( t^{-{1/2}} ( N_1(t) -
\mu t ) )^2, t \geq1 \rbrace$ converges weakly to
$G^2_S$. Recall that $\E[S^{2 + \varepsilon}] < \infty$ for some
$\varepsilon> 0$ by the $T_0$ assumptions.
Thus it follows from Lemma~\ref{binomial2} that the sequence of r.v.s
$\lbrace( t^{-{1/2}} ( N_1(t) - \mu t )
)^2, t \geq1 \rbrace$ is uniformly integrable. It follows that $\lim_{t
\rightarrow\infty} t^{-1} \E[ ( N_1(t) - \mu t )^2] =
\mu c^2_S$, since
uniform integrability plus weak convergence implies convergence of
moments, completing the proof of (\ref{eqzz2}).

We now prove that $\lim_{t \downarrow0} ( \E[ (
{\mathcal A}(t) - {\mathcal D}(t) )^2] \log^2(t) ) = 0$.
Note that for all $t \geq0$ and $k \geq1$, ${\mathcal P}(N_1(t) \geq
k) \leq{\mathcal P} ( R(S) \leq t ) ( {\mathcal P}(S
\leq t) )^{k-1}$. It thus follows from the $T_0$ assumptions (and
a straightforward analogy to an appropriate geometrically distributed
r.v.) that $\limsup_{t \downarrow0} t^{-1} \E[ ( N_1(t) - \mu
t )^2] < \infty$. Combining the above completes the proof. In
addition, the continuity of $\E[ ( {\mathcal A}(t) - {\mathcal
D}(t) )^2]$ on $[0,\infty)$ follows from the above and a simple
application of the Cauchy--Schwarz inequality. It follows that
${\mathcal A}(t) - {\mathcal D}(t)$ satisfies the conditions needed to
apply Theorems \ref{largedev1} and \ref{largedev2}, from which the
corollary follows.
\end{pf}
%
\subsection{\texorpdfstring{Proof of Theorem \protect\ref{ld1}}{Proof of Theorem 2}}
Before completing the proofs of our main results, it will be useful to
prove a strengthening of Theorem \ref{tightc}. Namely, we have the following:
%
\begin{lemma}\label{largeT1}
For all $x \geq0$,
%
\begin{equation}
\label{fixme1a} \limsup_{T \rightarrow\infty} \limsup_{n \rightarrow
\infty}
\pr\Biggl( n^{-{1/2}} \sup_{ t \geq T} \Biggl(A_n(t)
- \sum_{i=1}^n N_i(t) \Biggr)
> x \Biggr) = 0.
\end{equation}
\end{lemma}
\begin{pf}
Note that since $x \geq0$, it follows from a simple union bound and
stationary increments that the left-hand side of (\ref{fixme1a}) is at most
%
\begin{eqnarray}
\label{Tlimeq1}
&& \limsup_{T \rightarrow\infty} \limsup_{n \rightarrow\infty} \pr
\Biggl(
n^{-{1/2}} \Biggl( A_n(T) - \sum_{i=1}^n
N_i(T) \Biggr) > - \frac{B}{2} \mu T \Biggr)
\\
\label{Tlimeq2}
&&\qquad{}+ \limsup_{T \rightarrow\infty} \limsup_{n \rightarrow\infty} \pr
\Biggl( n^{-{1/2}} \sup_{ t \geq0} \Biggl(A_n(t) -
\sum_{i=1}^n N_i(t) \Biggr) >
\frac{B}{2} \mu T \Biggr).
\end{eqnarray}
It follows from Lemma \ref{renewalfclt3}, (\ref{eqzz2}) and the
central limit theorem that (\ref{Tlimeq1})\break equals zero.
As our proof of Theorem \ref{tightc} demonstrates tightness of\break $
\lbrace n^{-{1/2}} \sup_{ t \geq0} (A_n(t) - \sum_{i=1}^n N_i(t)
), n \geq1 \rbrace$, it follows that (\ref
{Tlimeq2}) also equals zero.
Combining the above completes the proof.
\end{pf}
We now complete the proof of Theorem \ref{ld1}.
\begin{pf*}{Proof of Theorem \ref{ld1}}
We first prove the upper bound. By Lemma~\ref{largeT1}, for any $x > 0$,
we may construct a strictly increasing sequence of integers $\lbrace
T_{x,k^{-1}}, k \geq1 \rbrace$ s.t. for all $k \geq1$,
\[
\limsup_{n \rightarrow\infty} \pr\Biggl( n^{-{1/2}} \sup
_{
t \geq T_{x,k^{-1}} } \Biggl( A_n(t) - \sum
_{i=1}^n N_i(t) \Biggr) \geq x \Biggr) <
k^{-1}.
\]
It follows that for all $x> 0$ and $k \geq1$,
%
\begin{eqnarray}
\label{eqeq2}
&&
\limsup_{n \rightarrow\infty} \pr\Biggl( n^{-
{1/2}} \sup
_{
t \geq0 } \Biggl(A_n(t) - \sum
_{i=1}^n N_i(t) \Biggr) \geq x \Biggr)
\nonumber\\[-8pt]\\[-8pt]
&&\qquad\leq\limsup_{n \rightarrow\infty} \pr\Biggl( n^{-{1/2}} \sup
_{ 0 \leq t \leq T_{x,k^{-1}} } \Biggl(A_n(t) - \sum
_{i=1}^n N_i(t) \Biggr) \geq x \Biggr) +
k^{-1}.\nonumber
\end{eqnarray}
By the Portmanteau theorem (see \cite{B99}), a sequence of r.v.s
$\lbrace X_n \rbrace$ converges weakly to the r.v. $X_{\infty}$
if and only if for all closed subsets $C$ of $\reals$,\break $\limsup_{n
\rightarrow\infty} \pr(X_n \in C) \leq\pr(X_{\infty} \in C)$ if
and only if for all open subsets $O$ of $\reals$, $\pr(X_{\infty}
\in O) \leq\liminf_{n \rightarrow\infty} \pr(X_n \in O)$. It
follows from (\ref{eqeq2}) and Corollary \ref{renewalfclt4} that for
all $x > 0$ and $k \geq1$,
%
\begin{eqnarray}
\label{eqeq3}
&&
\limsup_{n \rightarrow\infty} \pr\Biggl( n^{-
{1/2}} \sup
_{
t \geq0 } \Biggl(A_n(t) - \sum
_{i=1}^n N_i(t) \Biggr) \geq x \Biggr)\nonumber\\[-8pt]\\[-8pt]
&&\qquad
\leq\pr\Bigl( \sup_{ 0 \leq t \leq T_{x,k^{-1}} } \bigl( {\mathcal
A}(t) - {\mathcal
D}(t) - B \mu t \bigr) \geq x \Bigr) + k^{-1}.\nonumber
\end{eqnarray}
Note that the sequence of events $ \lbrace\sup_{ 0
\leq t \leq T_{x,k^{-1}} } ( {\mathcal A}(t) - {\mathcal D}(t) - B
\mu t ) \geq x,\break k \geq1 \rbrace$ is monotone in $k$. It
follows that
\begin{eqnarray*}
&&
\lim_{k \rightarrow\infty} \pr\Bigl( \sup_{ 0 \leq
t \leq T_{x,k^{-1}} } \bigl( {
\mathcal A}(t) - {\mathcal D}(t) - B \mu t \bigr) \geq x \Bigr) \\
&&\qquad= \pr
\Bigl( \sup
_{ t \geq0 } \bigl( {\mathcal A}(t) - {\mathcal D}(t) - B \mu t
\bigr) \geq x \Bigr).
\end{eqnarray*}
It then follows from (\ref{eqeq3}), by letting $k \rightarrow\infty
$, that for all $x > 0$,
%
\begin{eqnarray}
\label{eqeq4}
&&
\limsup_{n \rightarrow\infty} \pr\Biggl( n^{-
{1/2}} \sup
_{
t \geq0 } \Biggl(A_n(t) - \sum
_{i=1}^n N_i(t) \Biggr) \geq x \Biggr)\nonumber\\[-8pt]\\[-8pt]
&&\qquad
\leq\pr\Bigl( \sup_{ t \geq0 } \bigl( {\mathcal A}(t) - {\mathcal
D}(t) - B \mu t \bigr) \geq x \Bigr).\nonumber
\end{eqnarray}
From Theorem \ref{ubound1} and (\ref{eqeq4}) we have
\begin{eqnarray*}
&& \limsup_{x \rightarrow\infty} x^{-1} \log\Bigl( \limsup
_{n
\rightarrow\infty} \pr\bigl( \bigl( Q^n(\infty) - n
\bigr)^+n^{-{1/2}} \geq x \bigr) \Bigr)
\\
&&\qquad\leq \limsup_{x \rightarrow\infty} x^{-1} \log\pr\Bigl( \sup
_{ t \geq
0 } \bigl( {\mathcal A}(t) - {\mathcal D}(t) - B \mu t
\bigr) \geq x \Bigr)
\\
&&\qquad= - 2 B \bigl(c^2_A + c^2_S
\bigr)^{-1} \qquad\mbox{by Corollary \ref{largedev3}(i),}
\end{eqnarray*}
which completes the proof of the upper bound.

We now complete the proof of Theorem \ref{ld1} by
demonstrating that if
$A$ is an exponentially distributed r.v., then
%
\begin{equation}
\label{lldd1} \liminf_{x \rightarrow\infty} x^{-1} \log\Bigl(
\liminf_{n
\rightarrow\infty} \pr\bigl( \bigl( Q^n(\infty) - n
\bigr)^+n^{-{1/2}} > x \bigr) \Bigr) \geq-2 B \bigl(c^2_A
+ c^2_S\bigr)^{-1}.\hspace*{-28pt}
\end{equation}
Let $Z_n$ denote a Poisson r.v. with mean $\lambda_n$. It follows from
Theorem \ref{lbound1} that for all $x > 0$,
%
\begin{eqnarray}
\label{lboundpleaseuse}
&&\liminf_{n \rightarrow\infty} \pr\bigl( \bigl(
\cal{Q}^n(\infty) - n \bigr)^+n^{-{1/2}} > x \bigr)
\nonumber\\
&&\qquad\geq\Bigl( \liminf_{n \rightarrow\infty} \pr( Z_n \geq n )
\Bigr) \\
&&\qquad\quad{}\times\Biggl( \liminf_{n \rightarrow\infty} \sup_{t \geq0} \pr
\Biggl( n^{-{1/2}} \Biggl(A_n(t) - \sum
_{i=1}^n N_i(t) \Biggr) > x \Biggr)
\Biggr).\nonumber
\end{eqnarray}
Recall that $G$ is a normally distributed r.v. with mean 0 and variance
1. Thus by the central limit theorem,
%
\begin{equation}
\label{poisontonormal} \lim_{n \rightarrow\infty} \pr( Z_n \geq n
) = \pr( G \geq B ).
\end{equation}
Note that for any fixed $t$, ${\mathcal A}(t) - {\mathcal D}(t) - B
\mu t$ is a nondegenerate Gaussian r.v., and every $x \in\reals$ is
a continuity point of the distribution of any nondegenerate Gaussian r.v.
It follows from Lemma \ref{renewalfclt3} and the definition of weak
convergence that for any fixed $t \geq0$ and all $x > 0$,
\[
\lim_{n \rightarrow\infty} \pr\Biggl( n^{-{1/2}} \Biggl(A_n(t)
- \sum_{i=1}^n N_i(t) \Biggr)
> x \Biggr) = \pr\bigl( {\mathcal A}(t) - {\mathcal D}(t) - B \mu t >
x \bigr).
\]
Thus for any fixed $x > 0$ and $s \geq0$,
%
\begin{eqnarray}\label{lowlower1}
&&
\liminf_{n \rightarrow\infty} \sup_{t \geq0} \pr\Biggl(
n^{-{1/2}} \Biggl(A_n(t) - \sum_{i=1}^n
N_i(t) \Biggr) > x \Biggr) \nonumber\\
&&\qquad\geq \liminf_{n \rightarrow\infty}
\pr\Biggl( n^{-{1/2}} \Biggl(A_n(s) - \sum
_{i=1}^n N_i(s) \Biggr) > x \Biggr)
\\
&&\qquad= \pr\bigl( {\mathcal A}(s) - {\mathcal D}(s) - B \mu s > x
\bigr).\nonumber
\end{eqnarray}
By fixing $x > 0$ and taking the supremum over all $s \geq0$ in (\ref
{lowlower1}),
we find that for all $x > 0$,
%
\begin{eqnarray}
\label{loweraa1}
&&
\liminf_{n \rightarrow\infty} \sup_{t \geq0}
\pr\Biggl( n^{-{1/2}} \Biggl(A_n(t) - \sum
_{i=1}^n N_i(t) \Biggr) > x \Biggr) \nonumber\\[-8pt]\\[-8pt]
&&\qquad\geq
\sup_{t \geq0} \pr\bigl( {\mathcal A}(t) - {\mathcal D}(t) - B \mu
t > x \bigr).\nonumber
\end{eqnarray}
Combining (\ref{lboundpleaseuse}), (\ref{poisontonormal}) and (\ref
{loweraa1}), we find that the left-hand side (l.h.s.) of (\ref
{lboundpleaseuse}) is at least
%
\begin{equation}
\label{lboundpleaseuse2} \pr( G \geq B ) \sup_{t \geq0} \pr\bigl(
{\mathcal A}(t) - {\mathcal D}(t) - B \mu t > x \bigr).
\end{equation}
Equation (\ref{lldd1}) then follows from (\ref{lboundpleaseuse2}) and
Corollary \ref{largedev3}(ii). Combining (\ref{lldd1}) with the
first part of Theorem \ref{ld1}, which we have already proven,
completes the proof.
\end{pf*}
%
\section{Application to Reed's weak limit}\label{appssec}
In \cite{R09}, Reed resolved the long-standing open question,
originally posed in \cite{HW81}, of the tightness and weak
convergence for the queue length of the transient $\mathit{GI}/\mathit{GI}/n$ queue in
the H--W regime. However, the associated weak limit is only described
implicitly, as the solution to a certain stochastic convolution
equation; see \cite{R09}.

In this section we derive bounds for the weak limit of the
transient $\mathit{GI}/\mathit{GI}/n$ queue in the H--W regime.
Let ${\mathcal Q}^n_1$ denote the FCFS $\mathit{GI}/\mathit{GI}/n$ queue with
inter-arrival times drawn i.i.d. distributed as $A \lambda_n^{-1}$,
processing times drawn i.i.d. distributed as $S$, and the following
initial conditions. For $i=1,\ldots,n$, there is a single job
initially being processed on server $i$, and the set of initial
processing times of these $n$ initial jobs is drawn i.i.d. distributed
as $R(S)$; there are zero jobs waiting in queue, and the first
inter-arrival time is distributed as $R(A \lambda_n^{-1})$,
independent of the initial processing times of those jobs initially in
system. Note that ${\mathcal Q}^n_1$ has the same initial conditions as
the FCFS $\mathit{GI}/\mathit{GI}/n$ queue ${\mathcal Q}$ we considered when constructing
our upper bound in Section \ref{uppersec}. Let $\hat{Q}_1(t)$ denote
the unique strong solution to~the stochastic convolution equation given
in \cite{R09}, equation 1.1, where we note that the initial conditions
and dynamics of $\hat{Q}_1$ have suitable interpretations as limits of
the initial conditions and dynamics of
${\mathcal Q}^n_1$ as $n \rightarrow\infty$. Then letting $Q^n_1(t)$
denote the number in system at time $t$ in ${\mathcal Q}^n_1$, in \cite
{R09}, the following is proven:
%
\begin{theorem}\label{reedtheorem}
For all $T \in(0,\infty)$, the sequence of stochastic processes
$\lbrace n^{-{1/2}}(Q^n_1(t) - n)^+_{0 \leq t \leq T}, n \geq1
\rbrace$ converges\vspace*{1pt} weakly to $\hat{Q}_1(t)_{0 \leq t \leq T}$
in the space $D[0,T]$ under the $J_1$ topology.
\end{theorem}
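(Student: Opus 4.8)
The plan is to deduce Theorem~\ref{reedtheorem} from the main weak-convergence theorem of \cite{R.09} after verifying that the specific initialization of ${\mathcal Q}^n_1$ meets that theorem's hypotheses, and then to recall the structure of the argument there for orientation. Reed establishes exactly this kind of $J_1$-convergence for the $GI/GI/n$ queue in the H-W regime, under (roughly) finite second moments of $A$ and $S$ together with the near-origin regularity of $\pr(S \le t)$ that also powers Theorem~\ref{renewalfclt2}, weak convergence of $n^{-\frac{1}{2}}(Q^n_1(0)-n)$, and convergence of the centered, scaled empirical distribution of the remaining processing times of the jobs initially in service. First I would check that ${\mathcal Q}^n_1$ satisfies all of these: the $T_0$ conditions (\ref{t1}) and (\ref{t3}) give the moment and density conditions; since exactly $n$ jobs are in system at time $0$ we have $n^{-\frac{1}{2}}(Q^n_1(0)-n)=0$ identically; the $n$ initial remaining processing times are i.i.d.\ $R(S)$, so by the strong law their empirical measure converges a.s.\ to the law of $R(S)$ --- precisely the stationary residual-life profile that keeps the fluid limit balanced at $n$ and that enters \cite{R.09} Equation 1.1 as the initial datum --- with the associated centered empirical process converging by standard empirical-process theory under the moment bound; and the first inter-arrival $R(A\lambda_n^{-1})$ is exactly the equilibrium initialization under which the arrival FCLT of Theorem~\ref{renewalfclt} holds. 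Granting these, $\hat{Q}_1$ is the solution of \cite{R.09} Equation 1.1 driven by this initial data, and Theorem~\ref{reedtheorem} is immediate.

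For orientation, the argument being invoked writes the centered, $n^{\frac{1}{2}}$-scaled head count of ${\mathcal Q}^n_1$ as a continuous image of the scaled primitives. One starts from the exact FCFS dynamics --- number in system $=$ initial count $+$ arrivals $-$ departures, with departures governed by the $n$ server-attached renewal processes and the age/residual bookkeeping of jobs in service, and the excess above $n$ absorbed by a one-dimensional reflection --- then centers by $n$ and divides by $n^{\frac{1}{2}}$. The arrival term converges to a driftless Brownian motion by Theorem~\ref{renewalfclt}, the pooled completion term converges to ${\mathcal D}(\cdot)$ by Theorem~\ref{renewalfclt2}, the scaled initial-residual process enters through a fixed convolution kernel, and the continuous mapping theorem gives weak convergence to the process solving a stochastic convolution equation; a contraction/Gronwall estimate gives existence and uniqueness of its strong solution, and composing with the (continuous, in the $J_1$ topology on $D[0,T]$) positive-part map produces the stated limit for $n^{-\frac{1}{2}}(Q^n_1(t)-n)^+$.

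The routine part is the initialization check; the substance --- and what I expect to be the main obstacle in any from-scratch proof --- is the continuous-mapping step: establishing $C$-tightness of the scaled pre-limit processes on $[0,T]$ in the presence of the reflection at level $n$ and the coupled age/residual dynamics, identifying the limiting convolution kernel, and proving well-posedness of the limiting stochastic convolution equation. Condition (\ref{t3}) on $\pr(S \le t)$ is exactly what rules out short-job pile-ups and makes the superposition FCLT --- hence the whole scheme --- go through, the same phenomenon flagged in \cite{Whitt.85}. Since \cite{R.09} carries all of this out in full, I would simply invoke that paper.
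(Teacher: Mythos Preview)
Your proposal is correct and aligns with the paper's approach: the paper does not prove Theorem~\ref{reedtheorem} at all but simply imports it as a result of \cite{R.09}, stating that ``it is proven in \cite{R.09} that'' the conclusion holds. Your additional verification that the specific initialization of ${\mathcal Q}^n_1$ satisfies Reed's hypotheses and your sketch of the argument in \cite{R.09} are reasonable diligence, but go beyond what the paper itself does.
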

We now apply Theorem \ref{ubound1} to derive the first nontrivial
bounds for $\hat{Q}_1(t)$, proving:
%
\begin{theorem}\label{joshubound1}
For all $x > 0$ and $t \geq0$,
\[
\pr\bigl( \hat{Q}_1(t) > x \bigr) \leq\pr\Bigl( \sup
_{0 \leq s \leq t} \bigl( {\mathcal A}(s) - {\mathcal D}(s) - B \mu s
\bigr)
\geq x \Bigr).
\]
\end{theorem}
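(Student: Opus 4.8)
The plan is to combine the finite-$n$ distributional upper bound of Theorem~\ref{ubound1} with the two weak-convergence statements already in hand: Theorem~\ref{reedtheorem} for the left-hand side and Corollary~\ref{renewalfclt4} for the right-hand side. The crucial observation is that ${\mathcal Q}^n_1$ is \emph{exactly} the queue ${\mathcal Q}$ of Theorem~\ref{ubound1}, only with inter-arrival distribution $A\lambda_n^{-1}$ in place of $A$: in ${\mathcal Q}^n_1$ the $n$ initially-busy servers have i.i.d.\ $R(S)$ residual processing times, the queue is initially empty, subsequent processing times are i.i.d.\ $S$, and the first inter-arrival time is $R(A\lambda_n^{-1})$, independent of the initial processing times. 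Since an equilibrium renewal process with renewal distribution $A\lambda_n^{-1}$ is precisely the time-changed process $A_n(t) \stackrel{\Delta}{=} A(\lambda_n t)$ (residual life scales linearly under scaling of the renewal variable), Theorem~\ref{ubound1} applied with $A$ replaced by $A\lambda_n^{-1}$ yields, for every $n$, every $x>0$, and every $t\ge 0$,
$$
\pr\big( (Q^n_1(t) - n)^+ > x \big) \;\le\; \pr\bigg( \sup_{0 \le s \le t}\big( A_n(s) - \sum_{i=1}^n N_i(s) \big) > x \bigg),
$$
and hence, replacing $x$ by $n^{\frac12}x$,
$$
\pr\big( n^{-\frac12}(Q^n_1(t) - n)^+ > x \big) \;\le\; \pr\bigg( n^{-\frac12}\sup_{0 \le s \le t}\big( A_n(s) - \sum_{i=1}^n N_i(s) \big) > x \bigg). \qquad (\star)
$$

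Next I would let $n \to \infty$ in $(\star)$. By Theorem~\ref{reedtheorem} and the a.s.\ continuity of the diffusion $\hat{Q}_1$ — which makes the one-dimensional projection at time $t$ a.s.\ continuous on $D[0,T]$ under $J_1$ — the r.v.\ $n^{-\frac12}(Q^n_1(t)-n)^+$ converges weakly to $\hat{Q}_1(t)$, so since $(x,\infty)$ is open the Portmanteau theorem gives $\pr(\hat{Q}_1(t) > x) \le \liminf_{n\to\infty}\pr\big( n^{-\frac12}(Q^n_1(t)-n)^+ > x\big)$. On the other side, Corollary~\ref{renewalfclt4} gives $n^{-\frac12}\sup_{0\le s\le t}\big(A_n(s) - \sum_{i} N_i(s)\big) \weak \sup_{0\le s\le t}\big({\mathcal A}(s) - {\mathcal D}(s) - B\mu s\big)$; since $[x,\infty)$ is closed, Portmanteau gives $\limsup_{n\to\infty}\pr\big( n^{-\frac12}\sup_{0\le s\le t}(A_n(s)-\sum_{i} N_i(s)) \ge x\big) \le \pr\big(\sup_{0\le s\le t}({\mathcal A}(s)-{\mathcal D}(s)-B\mu s) \ge x\big)$, and a fortiori the same upper bound holds for the $\limsup$ of $\pr\big(n^{-\frac12}\sup_{0\le s\le t}(A_n(s)-\sum_{i} N_i(s)) > x\big)$. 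Chaining these three inequalities through $(\star)$ yields $\pr(\hat{Q}_1(t) > x) \le \pr\big(\sup_{0\le s\le t}({\mathcal A}(s)-{\mathcal D}(s)-B\mu s) \ge x\big)$, which is the assertion; the case $t=0$ is trivial since both sides vanish for $x>0$.

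The step requiring the most care — though it is bookkeeping rather than a genuine obstacle — is the pairing of the two one-sided Portmanteau estimates: one must evaluate the left-hand event over the \emph{open} set $(x,\infty)$ to obtain a $\liminf$ \emph{lower} bound for $\pr(\hat Q_1(t)>x)$, while one enlarges the right-hand strict inequality to the \emph{closed} set $[x,\infty)$ to obtain a $\limsup$ \emph{upper} bound, so that the mismatch between ``$>x$'' (on the left) and ``$\ge x$'' (on the right) in the statement is absorbed in the correct direction. One should also verify the two routine identifications invoked above: that ${\mathcal Q}^n_1$ satisfies the hypotheses of Theorem~\ref{ubound1} verbatim after the substitution $A \mapsto A\lambda_n^{-1}$, and that this substitution turns the arrival renewal process $A(\cdot)$ of Theorem~\ref{ubound1} into $A_n(\cdot) = A(\lambda_n\,\cdot)$, so that the right-hand side of $(\star)$ is exactly the sequence whose weak limit is supplied by Corollary~\ref{renewalfclt4} (with the $\{N_i\}$ being, in both places, i.i.d.\ equilibrium renewal processes with renewal distribution $S$).
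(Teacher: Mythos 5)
Your proposal is correct and follows essentially the same route as the paper: apply Theorem~\ref{ubound1} to ${\mathcal Q}^n_1$ (identifying the arrival process as $A_n$), then pass to the limit using Theorem~\ref{reedtheorem} on the left and Corollary~\ref{renewalfclt4} on the right, with a Portmanteau argument on each side handling the open/closed mismatch between ``$>x$'' and ``$\geq x$.'' The only cosmetic difference is that you spell out the a.s.\ continuity of $\hat{Q}_1$ as the reason the one-dimensional projection is a continuity point of the law in $D[0,T]$; the paper invokes Portmanteau at the process level directly, which amounts to the same thing.
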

\begin{pf}
Note that we may let the arrival process to ${\mathcal Q}^n_1$ be
$A_n(t)$. Thus by Theorem \ref{ubound1}, for all $x > 0$ and $t \geq0$,
%
\begin{eqnarray}\label{jeq3}
&&
\liminf_{n \rightarrow\infty} \pr\bigl( n^{-{1/2}} \bigl(
\cal{Q}^n_1(t) - n \bigr)^+ > x \bigr) \nonumber\\
&&\qquad\leq \liminf
_{n \rightarrow\infty} \pr\Biggl( n^{-{1/2}} \sup_{0 \leq s \leq t}
\Biggl( A_n(s) - \sum_{i=1}^n
N_i(s) \Biggr) > x \Biggr)
\nonumber\\[-8pt]\\[-8pt]
&&\qquad\leq \limsup_{n \rightarrow\infty} \pr\Biggl( n^{-{1/2}} \sup
_{0 \leq s \leq t} \Biggl( A_n(s) - \sum
_{i=1}^n N_i(s) \Biggr) \geq x \Biggr)
\nonumber\\
&&\qquad\leq \pr\Bigl( \sup_{0 \leq s \leq t} \bigl( {\mathcal A}(s) -
{\mathcal
D}(s) - B \mu s \bigr) \geq x \Bigr)\nonumber
\end{eqnarray}
with the final inequality following from the Portmanteau theorem. Again
applying the Portmanteau theorem, it follows from Theorem
\ref{reedtheorem} that for all $x > 0$,
%
\begin{equation}
\label{jeq4} \pr\bigl( \hat{Q}_1(t) > x \bigr) \leq\liminf
_{n \rightarrow\infty} \pr\bigl( n^{-{1/2}} \bigl(Q^n_1(t)
- n \bigr)^+ > x \bigr).
\end{equation}
Combining (\ref{jeq3}) and (\ref{jeq4}) completes the proof.
\end{pf}
Theorem \ref{joshubound1} implies that $\hat{Q}_1(t)$ is
distributionally bounded over time, and thus is in a sense stable.
In particular, for all $t \geq0$, $\hat{Q}_1(t)$ is stochastically
dominated by the r.v. $\sup_{t \geq0} ( {\mathcal A}(t) -
{\mathcal D}(t) - B \mu t )$.

\section{Conclusion}\label{concsec}
In this paper, we studied the FCFS $\mathit{GI}/\mathit{GI}/n$ queue in the Halfin--Whitt
regime. We proved that under minor technical conditions the associated
sequence of steady-state queue length distributions, normalized by
$n^{{1/2}}$, is tight. We derived an upper bound for the large
deviation exponent of the limiting steady-state queue length matching
that conjectured in \cite{GM08}, and proved a matching lower bound
for the case of Poisson arrivals. We also derived the first nontrivial
bounds for the weak limit process studied in \cite{R09}. Our main
proof technique was the derivation of new and simple bounds for the
FCFS $\mathit{GI}/\mathit{GI}/n$ queue, which are of a structural nature, and exemplify a
general methodology which may be useful for analyzing a variety of
queueing systems.

This work leaves many interesting directions for future
research. One pressing question is whether or not $\lbrace n^{-{1/2}}
( Q^n(\infty) - n )^+, n \geq1 \rbrace$ has a
\textit{unique} weak limit, and thus converges weakly.
Indeed, such a result is only known for the cases of Markovian
processing times \cite{HW81}, deterministic processing times \cite
{JMM04}, and processing times with finite support \cite{GM08}. In
all of these cases,
either the distribution of $Q^n(\infty)$ can be computed explicitly
\cite{HW81,JMM04}, or can be represented as the steady-state
of a Markov chain whose dimension does not grow with $n$ \cite{GM08};
in the general setting, neither of these conditions hold. Similarly,
although Theorem~\ref{joshubound1} shows that the weak limit process
$\hat{Q}_1(t)$ is distributionally bounded over time, it is unknown
whether $\hat{Q}_1(t)$ has a well-defined stationary measure.
Furthermore, should $\lbrace n^{-{1/2}} ( Q^n(\infty) - n
)^+,\break n \geq1 \rbrace$ have a unique weak limit and $\hat
{Q}_1(t)$ have a well-defined stationary measure, must the two
coincide? We note that this question is intimately related to showing
that if one initializes ${\mathcal Q}^n$ with its stationary measure,
then the relevant sequence of scaled queueing processes converges weakly
(at the process level) to an appropriate stationary limit, and refer
the reader to
\cite{S63,HW81,PR00b,DHT10,RK10b} for progress along these lines. Similar questions (on the
order of fluid, as opposed to diffusion, scaling) were also
investigated in \cite{kang2012asymptotic}.

It would be interesting to extend our techniques to more
general models. For example, it should be possible to extend our lower
bounds to non-Poisson arrival processes, as was done in \cite{GM08}
for the special case of processing times with finite support. It would
also be interesting to analyze the large deviation behavior when the
finite second moment condition does not hold, since in this case the
large deviation exponent of Theorem \ref{ld1} equals zero, which
suggests that a fundamentally different qualitative behavior
may arise
in this setting. Finally, it would be interesting to generalize our
bounds to systems with abandonments ($\mathit{GI}/\mathit{GI}/n + \mathit{GI}$). This setting is
practically important, as the main application of the H--W regime has
been to the study of call-centers, for which customer abandonments are
an important modeling component \cite{AAM07}. For some interesting
steps along these lines the reader is referred to the recent papers
\cite{DH10,GS11a}.

\begin{appendix}\label{app}
\section*{Appendix}
\subsection{\texorpdfstring{Proof of Lemma \protect\ref{qom}}{Proof of Lemma 2}}
\mbox{}
\begin{pf}
Let $Z^i(t)$ denote the number of jobs initially in ${\mathcal Q}^i$
which are still in ${\mathcal Q}^i$ at time $t$, $i \in\lbrace1,2
\rbrace$.
We claim that $Z^2(t) \geq Z^1(t)$ for all $t \geq0$. Indeed, let $J$
be any job initially in ${\mathcal Q}^1$, and let $S_J$ denote its
initial processing time. Then \ref{as2} ensures the existence of a
distinct corresponding job $J'$ initially in ${\mathcal Q}^2$, with the
same initial processing time $S_J$.
Since by \ref{as1} all jobs initially in $\cal{Q}^1$ begin
processing at time $0$, it follows that $J$ departs ${\mathcal Q}^1$ at
time~$S_J$, while $J'$ departs ${\mathcal Q}^2$ no earlier than $S_J$.
Making this argument for each job $J$ initially in ${\mathcal Q}^1$
proves that $Z^2(t) \geq Z^1(t)$ for all $t \geq0$.

Let $D^i_k$ denote the time at which the job that arrives to
${\mathcal Q}^i$ at time $T^1_k$ departs from ${\mathcal Q}^i$, $k \geq
1, i \in\lbrace1,2 \rbrace$.
We now prove by induction that for $k \geq1$, $D^2_k \geq D^1_k$, from
which the proposition follows. Observe that for all $k \geq1$,
%
\begin{equation}
\label{d11eq1} D^1_k = \inf\Biggl\lbrace t\dvtx  t \geq
T^1_k, Z^1(t) + \sum
_{j=1}^{k-1} I\bigl( D^1_j > t
\bigr) \leq n-1 \Biggr\rbrace+ S^1_k.
\end{equation}
Also,
%
\begin{equation}
\label{d22eq1} D^2_k \geq\inf\Biggl\lbrace t\dvtx  t \geq
T^1_k, Z^1(t) + \sum
_{j=1}^{k-1} I\bigl( D^2_j > t
\bigr) \leq n-1 \Biggr\rbrace+ S^1_k,
\end{equation}
where the inequality in (\ref{d22eq1}) arises since $Z^2(t) \geq
Z^1(t)$ for all $t \geq0$, and the job that arrives to ${\mathcal
Q}^2$ at time $T^1_k$ may have to wait for additional jobs, which
either were initially present in ${\mathcal Q}^2$ but not ${\mathcal
Q}^1$, or which arrive at a time belonging to $\lbrace T^2_k, k \geq1
\rbrace\setminus\lbrace T^1_k, k \geq1 \rbrace$.

For the base case $k = 1$, note that
$D^1_1 = \inf\lbrace t\dvtx  t \geq T^1_1, Z^1(t) \leq n-1 \rbrace+
S^1_1$, while
$D^2_1 \geq\inf\lbrace t\dvtx  t \geq T^1_1, Z^1(t) \leq n-1 \rbrace+
S^1_1$.

Now assume the induction is true for all $j \leq k$. Then for
all $t \geq0$,\break $\sum_{j=1}^{k} I( D^2_j > t ) \geq\sum_{j=1}^{k} I(
D^1_j > t )$.
Thus
\begin{eqnarray*}
&&\inf\Biggl\lbrace t\dvtx  t \geq T^1_{k+1}, Z^1(t) +
\sum_{j=1}^{k} I\bigl( D^1_j
> t \bigr) \leq n-1 \Biggr\rbrace+ S^1_{k+1}
\\
&&\qquad\leq\inf\Biggl\lbrace t\dvtx  t \geq T^1_{k+1},
Z^1(t) + \sum_{j=1}^{k} I\bigl(
D^2_j > t \bigr) \leq n-1 \Biggr\rbrace+
S^1_{k+1}.
\end{eqnarray*}
It then follows from (\ref{d11eq1}) and (\ref{d22eq1}) that
$D^1_{k+1} \leq D^2_{k+1}$, completing the induction.
\end{pf}

\subsection{\texorpdfstring{Proof of Theorem \protect\ref{statwait2}}{Proof of Theorem 5}}
In \cite{Stight99}, Theorem 1 (given in the notation of~\cite
{Stight99}), the following is proven:
%
\begin{theorem}\label{statwait1}
Suppose that for all sufficiently large $n$, $\lbrace\zeta_{n,i}, i
\geq1 \rbrace$ is a stationary, countably infinite sequence of r.v.
Let $a_n \stackrel{\Delta}{=} \E[ \zeta_{n,1} ]$, and
$W_{n,k} \stackrel{\Delta}{=} \sum_{i=1}^k \zeta_{n,i}$. Further
assume that $a_n < 0, \lim_{n \rightarrow\infty} a_n = 0$, and there
exist $C_1,\break C_2 < \infty$ and $\varepsilon> 0$ s.t. for all sufficiently
large $n$:
\begin{longlist}
\item$\E[ | W_{n,k} - k a_n |^{2 + \varepsilon} ] \leq C_1
k^{1 + {\varepsilon}/{2}}$ for all $k \geq1$;
\item$\pr( \max_{i=1,\ldots,k} ( W_{n,i} - i a_n ) > x )
\leq C_2 \E[ |W_{n,k} - k a_n |^{2 + \varepsilon} ] x^{- (2 +
\varepsilon)}$ for all $k \geq1$ and $x > 0$;
\item$\pr( \lim_{k \rightarrow\infty} W_{n,k} = -\infty) = 1$.
\end{longlist}
Then $\lbrace|a_n| \sup_{k \geq0} W_{n,k}, n \geq1 \rbrace$ is tight.
\end{theorem}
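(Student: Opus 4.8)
This statement is, up to relabeling of the hypotheses, Theorem~1 of \cite{Stight.99}, so one legitimate route is simply to invoke that result; below I describe the direct argument I would give, since the quantitative hypotheses~(i)--(ii) here are strong enough to allow a short self-contained proof.

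It suffices to show that $\pr\big(\sup_{k\ge0}W_{n,k}>M/|a_n|\big)$ is bounded, uniformly over all sufficiently large $n$, by a function of $M$ alone that tends to $0$ as $M\to\infty$; since $W_{n,0}=0$, only $\pr\big(\sup_{k\ge1}W_{n,k}>x\big)$ with $x=M/|a_n|$ needs to be controlled. The crude union bound $\sum_{k\ge1}\pr(W_{n,k}>x)$, estimated by Markov's inequality and hypothesis~(i), carries an extra factor of order $|a_n|^{-2}$ and is hopeless as $a_n\to0$; hypothesis~(ii) is present precisely to supply the sharper maximal control that removes this loss.

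The plan is a dyadic blocking decomposition. Partition $\{k\ge1\}$ into blocks $B_j=\{2^j,\dots,2^{j+1}-1\}$, $j\ge0$. For $k\in B_j$, since $a_n<0$ and $k\ge2^j$ one has $W_{n,k}=(W_{n,k}-ka_n)+ka_n\le(W_{n,k}-ka_n)-2^j|a_n|$, whence
$$\pr\Big(\sup_{k\in B_j}W_{n,k}>x\Big)\le\pr\Big(\max_{1\le k\le2^{j+1}}(W_{n,k}-ka_n)>x+2^j|a_n|\Big)\le C_1C_2\,(2^{j+1})^{1+\frac{\epsilon}{2}}\big(x+2^j|a_n|\big)^{-(2+\epsilon)},$$
using hypothesis~(ii) and then bounding $\E[|W_{n,2^{j+1}}-2^{j+1}a_n|^{2+\epsilon}]\le C_1(2^{j+1})^{1+\epsilon/2}$ via~(i). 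Summing over $j$ and substituting $x=M/|a_n|$, the total is $C'\,|a_n|^{2+\epsilon}\sum_{j\ge0}2^{j(1+\epsilon/2)}\big(M+2^ja_n^2\big)^{-(2+\epsilon)}$; splitting this sum at the index where $2^ja_n^2\approx M$ shows it has order $M^{-(1+\epsilon/2)}\,|a_n|^{-(2+\epsilon)}$, so the powers of $|a_n|$ cancel and $\pr\big(\sup_{k\ge1}W_{n,k}>M/|a_n|\big)\le C''M^{-(1+\epsilon/2)}$ with $C''$ depending only on $C_1,C_2,\epsilon$. Letting $M\to\infty$ gives tightness (the finitely many $n$ for which the hypotheses may fail are handled by noting that each $\sup_{k\ge0}W_{n,k}$ is finite w.p.1); hypothesis~(iii) is used only to guarantee this finiteness, which is in fact already implied by the tail bound above.

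The one delicate point is the estimation of the dyadic sum: one must check that the geometric growth $2^{j(1+\epsilon/2)}$ of the numerators is exactly offset by the $(2+\epsilon)$-th power of $M+2^ja_n^2$, so that after summing and rescaling by $|a_n|$ nothing diverges as $a_n\to0$. This cancellation is where the ``diffusive'' exponents in hypotheses~(i) and~(ii) enter, and it is the heart of the argument.
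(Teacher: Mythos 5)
Your argument is correct, and it is worth noting that the paper gives no proof of this statement at all: Theorem~\ref{statwait1} is quoted verbatim from \cite{Stight.99} (Theorem~1 there), and the paper's appendix only adapts that external proof to obtain Theorem~\ref{statwait2}. Your dyadic-blocking argument is essentially the skeleton of Szczotka's original proof --- the threshold $x/|a_n|+2^{j}|a_n|$ you use is exactly the quantity $x(a_n,k)$ appearing in the fragment of \cite{Stight.99} reproduced in the appendix --- but your version is cleaner: rather than splitting the max over block $B_j$ into the value at the block's left endpoint plus the running max of the increments over the block (which is where stationarity is invoked in \cite{Stight.99}), you bound the whole prefix maximum up to $2^{j+1}$ directly via hypothesis (ii), so stationarity is not even needed beyond what (i)--(ii) already encode. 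I checked the key estimate: with $x=M/|a_n|$ the block bound is $C_1C_2(2^{j+1})^{1+\epsilon/2}|a_n|^{2+\epsilon}(M+2^ja_n^2)^{-(2+\epsilon)}$, and splitting the sum at $2^{j_0}\approx M/a_n^2$ gives, in both regimes, a contribution of order $M^{-(1+\epsilon/2)}|a_n|^{-(2+\epsilon)}$, so the prefactor $|a_n|^{2+\epsilon}$ cancels exactly and the tail bound $C''M^{-(1+\epsilon/2)}$ is uniform in large $n$ (for $M\geq 1$, say, so that $j_0\geq 0$ once $|a_n|\leq 1$). Your observation that hypothesis (iii) is redundant agrees with the paper's own remark in the appendix that (iii) is never used in \cite{Stight.99}. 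The only loose end is the finitely many small $n$ for which none of the hypotheses are assumed: there, finiteness of $\sup_{k\geq 0}W_{n,k}$ is not guaranteed by anything, but this is a defect of the theorem statement shared by the paper (and by Theorem~\ref{statwait2}), and is immaterial to how tightness is used downstream.
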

With Theorem \ref{statwait1} in hand, we now complete the proof of
Theorem \ref{statwait2}.
\begin{pf*}{Proof of Theorem \ref{statwait2}}
The proof follows almost exactly as the proof of Theorem \ref
{statwait1} given in \cite{Stight99}, and we now explicitly comment on
precisely where the proof must be changed superficially so as to carry
through under the slightly different set of assumptions of Theorem
\ref{statwait2}. First off, nowhere in the proof of
Theorem~\ref{statwait1} given in \cite{Stight99} is assumption (iii) of
Theorem \ref{statwait1} used, and thus that assumption is extraneous
and may be removed. The only other difference between the set of
assumptions for Theorem \ref{statwait1} and the set of assumptions for
Theorem \ref{statwait2} is that assumption (ii) of Theorem \ref
{statwait1} is replaced by assumption (ii) of Theorem \ref {statwait2}.
We now show that Theorem \ref{statwait1} holds under this change in
assumptions. As in \cite{Stight99}, let $x(a_n,k) \stackrel{\Delta}{=}
\frac{x}{|a_n|} + 2^k |a_n|$. Then the\vspace*{1pt} only place where assumption (ii)
of Theorem \ref{statwait1} is used is between equations 5 and 6, where
this assumption is required to demonstrate that
%
\begin{eqnarray}
\label{oldass1}
&& \pr\biggl( W_{n,2^k} - 2^k a_n >
\frac{1}{2}x(a_n,k) \biggr) \nonumber\\[-8pt]\\[-8pt]
&&\quad{}+ \pr\Biggl( \max
_{i=0,\ldots,2^k} \Biggl( \sum_{j=1}^i
\zeta_{n, j + 2^k} - i a_n \Biggr) > \frac{1}{2}x(a_n,k)
\Biggr)
\nonumber\\
\label{oldassineq}
&&\qquad\leq(1 + C_2) C_1 2^{2 + \varepsilon} 2^{k(1 + {\varepsilon}/{2})}
\bigl( x(a_n,k) \bigr)^{-(2 + \varepsilon)}.
\end{eqnarray}
We now prove that assumption (ii) of Theorem \ref{statwait2} is
sufficient to derive (\ref{oldassineq}). In particular, the first
summand of (\ref{oldass1}) is at most
%
\begin{eqnarray}\label{newnewass1}
&& \E\bigl[\bigl|W_{n,2^k} - 2^k a_n
\bigr|^{2 + \varepsilon} \bigr] \bigl( \tfrac{1}{2}x(a_n,k)
\bigr)^{- (2 + \varepsilon) } \qquad\mbox{by Markov's inequality}
\nonumber\\
&&\qquad\leq C_1 2^{2 + \varepsilon} 2^{k(1 + {\varepsilon}/{2})} \bigl(
x(a_n,k) \bigr)^{- (2 + \varepsilon) } \\
&&\quad\qquad\mbox{by assumption (i) of Theorem
\ref{statwait2}.}\hspace*{-25pt}\nonumber
\end{eqnarray}
By the stationarity of $\lbrace\zeta_{n,i}, i \geq1 \rbrace$, the
second summand of (\ref{oldass1}) equals
%
\begin{eqnarray}\label{newnewass2}
&& \pr\biggl( \max_{i=0,\ldots,2^k} ( W_{n,i} - i
a_n ) > \frac
{1}{2}x(a_n,k) \biggr)
\nonumber\\
&&\qquad\leq C_2 2^{2 + \varepsilon} 2^{k(1 + {\varepsilon}/{2})} \bigl(
x(a_n,k) \bigr)^{- (2 + \varepsilon) } \\
&&\qquad\quad\mbox{by assumption (ii) of Theorem
\ref{statwait2}}.\nonumber
\end{eqnarray}
Since we may w.l.o.g. take $C_1,C_2 \geq1$, it follows that $C_1 + C_2
\leq(1 + C_2) C_1$, and thus (\ref{oldassineq}) follows from (\ref
{newnewass1}) and (\ref{newnewass2}). The theorem follows from the
proof of Theorem \ref{statwait1} given in \cite{Stight99}.
\end{pf*}

\subsection{\texorpdfstring{Proof of Lemma \protect\ref{binomial2}}{Proof of Lemma 5}}
We note that the special case $r = 2$ is treated in~\cite{Whitt85}.
Before proceeding with the proof of Lemma \ref{binomial2}, it will be
useful to prove three auxiliary results. The first treats the special
case $n = 1, t \geq1$ for ordinary (as opposed to equilibrium) renewal
processes, and is proven in Theorem 1 of \cite{CHL79}.
%
\begin{theorem}\label{extendedrenewalCLT200}
Suppose $Z(t)$ is an ordinary renewal process with renewal distribution
$X$ s.t. $\E[X] = \mu^{-1} \in(0, \infty)$, and $\E[X^r] < \infty
$ for some $r \geq2$. Then
$\sup_{t \geq1} t^{- {r}/{2} } \E[| Z(t) - \mu t |^r
] < \infty$.
\end{theorem}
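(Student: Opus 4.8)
The plan is to pass from the renewal process to its embedded random walk and to read off the tails of $Z(t)-\mu t$ from the tails of the partial sums, for which classical moment inequalities are available. Writing $S_k \stackrel{\Delta}{=} \sum_{i=1}^k X_i$ so that $\lbrace Z(t)\ge k \rbrace = \lbrace S_k\le t \rbrace$, I would start from the layer-cake identity
\[
\E\big[|Z(t)-\mu t|^r\big] = r\int_0^\infty x^{r-1}\,\pr\big(|Z(t)-\mu t|>x\big)\,dx ,
\]
and split the integral at $x\asymp t^{1/2}$. On $[0,t^{1/2}]$ the trivial bound $\pr(\cdot)\le 1$ already contributes $O(t^{r/2})$, so the real task is to show that each of the two tails of $Z(t)-\mu t$, integrated against $x^{r-1}$ over $x>t^{1/2}$, also contributes $O(t^{r/2})$.

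For the upper tail, $\lbrace Z(t)-\mu t>x \rbrace = \lbrace Z(t)\ge k \rbrace = \lbrace S_k\le t \rbrace$ with $k \stackrel{\Delta}{=} \lceil\mu t + x\rceil$, so $\E[S_k]=k\mu^{-1}\ge t+x\mu^{-1}$ and $\pr(Z(t)-\mu t>x)\le \pr\big(S_k-\E[S_k]\le -x\mu^{-1}\big)$. Since $X\ge 0$ we have $\E[e^{-\theta X}]<\infty$ for every $\theta\ge 0$, so a Chernoff bound applies to this one-sided lower deviation: it gives a Gaussian-type estimate $\exp(-cx^2/t)$ in the normal range $t^{1/2}\le x\le t$ (where $k\asymp\mu t$) and exponential-in-$x$ decay for larger $x$ (where $k\gg\mu t$, which forces an atypically large fraction of the $X_i$ to be tiny), and in either case integrating $x^{r-1}$ against it contributes $O(t^{r/2})$. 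For the lower tail, note that $\lbrace Z(t)-\mu t<-x \rbrace$ is empty once $x>\mu t$, while for $t^{1/2}<x<\mu t$ we have $\lbrace Z(t)<\mu t - x \rbrace = \lbrace S_{k'}>t \rbrace$ with $k' \stackrel{\Delta}{=} \lceil\mu t - x\rceil$, so that $\pr(Z(t)-\mu t<-x)\le \pr\big(S_{k'}-\E[S_{k'}]>t-k'\mu^{-1}\big)$ with $t-k'\mu^{-1}$ of order $x$.

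The one genuinely delicate estimate is this last (upper) deviation of $S_{k'}$: here only $\E[X^r]<\infty$ (equivalently $\E[|X-\mu^{-1}|^r]<\infty$) is available, so $\E[e^{\theta X}]$ may be infinite, Chernoff is unavailable, and the naive combination of the Marcinkiewicz--Zygmund inequality (Lemma~\ref{csumbound}) with Markov's inequality only yields $\pr(\cdot)\lesssim (k')^{r/2}x^{-r}\asymp t^{r/2}x^{-r}$, which when integrated against $x^{r-1}$ over $[t^{1/2},\mu t]$ produces a spurious factor $\log t$. To remove it I would invoke a Fuk--Nagaev type inequality (equivalently, truncate the $X_i$ at a level chosen to balance the resulting error terms): under $\E[|X-\mu^{-1}|^r]<\infty$ this bounds $\pr\big(S_{k'}-\E[S_{k'}]>y\big)$ by the sum of a Gaussian term $\exp(-cy^2/k')$ and a ``one-big-jump'' term of order $k'\,\E[|X-\mu^{-1}|^r]\,y^{-r}$; with $k'\asymp\mu t$ and $y\asymp x$, the former integrates to $O(t^{r/2})$ and the latter to $O(t\log t)$, which is $O(t^{r/2})$ for $r>2$ (the borderline case $r=2$, where the quantity is $\rVar(Z(t))$, being handled by classical renewal-equation arguments; cf.\ \cite{Whitt.85}). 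Summing the contributions of the two ranges and the two tails then gives $\sup_{t\ge 1}t^{-r/2}\,\E[|Z(t)-\mu t|^r]<\infty$.
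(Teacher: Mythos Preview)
The paper does not actually prove this theorem: it is quoted verbatim as Theorem~1 of \cite{CHL.79} and then used as a black box in the proof of Lemma~\ref{extendedrenewalCLT2}. Your proposal therefore cannot match the paper's approach, because there is none to match.

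That said, your argument is sound. The layer-cake decomposition with the cut at $x\asymp t^{1/2}$ is natural, and the asymmetric handling of the two tails is the right idea. For the upper tail $\{Z(t)>\mu t+x\}=\{S_k\le t\}$ you correctly exploit $X\ge 0$ to get $\E[e^{-\theta X}]\le 1-\theta\mu^{-1}+\tfrac12\theta^2\E[X^2]$ for all $\theta\ge 0$, hence a genuine Chernoff bound; optimizing gives $\exp(-c x^2/k)$, and since $k\asymp \mu t$ for $x\le \mu t$ and $k\asymp x$ for $x>\mu t$, the claimed Gaussian/exponential dichotomy and the resulting $O(t^{r/2})$ contribution follow. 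For the lower tail $\{Z(t)<\mu t-x\}=\{S_{k'}>t\}$ only the moment hypothesis $\E[X^r]<\infty$ is available, and Fuk--Nagaev is precisely the right tool to split off the one-big-jump term and avoid the spurious $\log t$ that Markov plus Lemma~\ref{csumbound} alone would produce. Your bookkeeping ($O(t^{r/2})$ from the Gaussian pieces, $O(t\log t)=o(t^{r/2})$ from the one-big-jump piece when $r>2$) is correct.

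Two small points. First, state precisely which form of Fuk--Nagaev you invoke (e.g.\ Nagaev, \emph{Ann.\ Probab.}\ 7 (1979), Corollary~1.8), since the inequality comes in many variants with different constants and ranges. Second, your treatment of $r=2$ is itself a citation (to renewal-equation arguments for $\rVar(Z(t))$), so strictly speaking your proof is self-contained only for $r>2$; this is fine, but worth flagging explicitly. What your route buys over the paper's is a largely self-contained argument with explicit tail control; what it costs is length.
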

Second, we prove a lemma treating the special case $n = 1, t \geq1$
for equilibrium renewal processes.
%
\begin{lemma}\label{extendedrenewalCLT2}
Under the same definitions and assumptions as Lemma \ref{binomial2},
for each $r \geq2$, there exists $C_{X,r} < \infty$ (depending only
on $X$ and $r$) s.t.
for all $t \geq1$, $\E[| Z^e_1(t) - \mu t |^r ] < C_{X,r}
t^{{r/2}}$.
\end{lemma}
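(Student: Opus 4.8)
The plan is to reduce the equilibrium renewal process to an ordinary one by conditioning on its first renewal epoch, and then invoke Theorem~\ref{extendedrenewalCLT200}. Write $R \stackrel{\Delta}{=} R(X)$ for the first inter-event time of $Z^e_1$, and let $Z'$ denote the ordinary renewal process with renewal distribution $X$ generated by the inter-event times of $Z^e_1$ after the first; then $Z'$ is independent of $R$, and w.p.1 $Z^e_1(t) = I(R \leq t)\big(1 + Z'(t-R)\big)$. Consequently
\[
\E\big[\,|Z^e_1(t) - \mu t|^r\,\big] = (\mu t)^r\,\pr(R > t) + \E\big[\,|1 + Z'(t-R) - \mu t|^r\, I(R \leq t)\,\big],
\]
and I will bound the two terms separately.

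For the first term, $\E[X^r] < \infty$ and Markov's inequality give $\pr(X > y) \leq y^{-r}\E[X^r]$, so by the definition (\ref{rezz}) of the residual distribution $\pr(R > t) = \mu\int_t^\infty \pr(X > y)\,dy \leq \frac{\mu\E[X^r]}{r-1}\,t^{1-r}$; multiplying by $(\mu t)^r$ leaves a quantity of order $t$, which is at most $t^{r/2}$ since $r \geq 2$ and $t \geq 1$. For the second term, on $\{R \leq t\}$ I will apply the triangle inequality $|1 + Z'(t-R) - \mu t| \leq 1 + |Z'(t-R) - \mu(t-R)| + \mu R$, followed by the elementary bound $(a+b+c)^r \leq 3^{r-1}(a^r + b^r + c^r)$. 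This reduces matters to the bounds $\E[R^r\, I(R\leq t)] \leq t\,\E[R^{r-1}]$, where a direct computation from (\ref{rezz}) gives $\E[R^{r-1}] = \frac{\mu}{r}\E[X^r] < \infty$, and $\E\big[|Z'(t-R) - \mu(t-R)|^r\, I(R\leq t)\big]$.

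For the last of these I will condition on $R$ and split according to whether $t - R \geq 1$. With $C_1 \stackrel{\Delta}{=} \sup_{u \geq 1} u^{-r/2}\,\E[|Z'(u) - \mu u|^r]$, finite by Theorem~\ref{extendedrenewalCLT200}, the contribution of $\{t-R \geq 1\}$ is at most $C_1(t-R)^{r/2} \leq C_1 t^{r/2}$. On $\{t-R < 1\}$ the estimate of Theorem~\ref{extendedrenewalCLT200} is unavailable, so I will instead use the crude monotonicity bound $|Z'(t-R) - \mu(t-R)| \leq Z'(1) + \mu$ together with $\E[Z'(1)^r] \leq 2^{r-1}(C_1 + \mu^r) < \infty$, which is just the $t = 1$ instance of Theorem~\ref{extendedrenewalCLT200}. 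Assembling the pieces and using $1 \leq t \leq t^{r/2}$ throughout yields $\E[|Z^e_1(t) - \mu t|^r] \leq C_{X,r}\,t^{r/2}$ for a suitable finite $C_{X,r}$ depending only on $X$ and $r$.

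The only genuinely delicate point is the small-time regime $t - R < 1$, for which Theorem~\ref{extendedrenewalCLT200} does not directly apply and which forces the separate crude estimate; this estimate ultimately rests on the finiteness of $\E[Z'(1)^r]$, itself a consequence of the $t=1$ case of that theorem. The remaining steps are routine moment manipulations, in the spirit of the $r = 2$ argument of \cite{Whitt.85}.
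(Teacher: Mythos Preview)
Your proposal is correct and follows essentially the same approach as the paper: both condition on the first (residual) renewal $R = X^e$, reduce to the centered ordinary renewal process, invoke Theorem~\ref{extendedrenewalCLT200} on $\{t - R \geq 1\}$, fall back on the crude bound $|Z'(t-R) - \mu(t-R)| \leq Z'(1) + \mu$ on $\{t - R < 1\}$, and control the residual contribution $\mu\min(R,t)$ via the $r$th moment of $X$. The only differences are cosmetic---the paper packages the decomposition as $Z^e_1(t) - \mu t = \big(Z((t-X^e)^+) - \mu(t-X^e)^+\big) + \big(I(X^e \leq t) - \mu\min(X^e,t)\big)$ rather than splitting on $\{R>t\}$ first, and it cites \cite{P.65} for the finiteness of $\E[(Z(1)+\mu)^r]$ rather than deducing it from the $t=1$ case of Theorem~\ref{extendedrenewalCLT200} as you do.
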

\begin{pf}
Let $X^e$ denote the first renewal interval in $Z^e_1(t)$, and
$f_{X^e}$ its density function, whose existence is guaranteed by (\ref
{rezz}). Observe that we may construct $Z^e_1(t)$ and an ordinary
renewal process $Z(t)$ (also with renewal distribution $X$) on the same
probability space so that
for all $t \geq0$, $Z^e_1(t) = I( X^e \leq t) + Z ( ( t - X^e)^+
)$, with $Z(t)$ independent of $X^e$.
Thus
\[
Z^e_1(t) - \mu t = \bigl( Z \bigl( \bigl( t -
X^e\bigr)^+ \bigr) - \mu\bigl( t - X^e \bigr)^+ \bigr)
+ \bigl( I \bigl( X^e \leq t \bigr) - \mu\bigl(t - \bigl(t -
X^e\bigr)^+ \bigr) \bigr).
\]
Fixing some $t \geq1$, it follows that $\E[| Z^e_1(t) - \mu t
|^r ]$ is at most
%
\begin{eqnarray}
\label{verylarge2}
&& 2^{r - 1} \E\bigl[\bigl| Z \bigl( \bigl( t - X^e\bigr)^+
\bigr) - \mu\bigl( t - X^e \bigr)^+\bigr|^r \bigr]
\\
\label{verylarge3}
&&\qquad{} + 2^{r-1} \E\bigl[ \bigl| I \bigl( X^e \leq t \bigr) - \mu
\bigl(t - \bigl(t - X^e\bigr)^+ \bigr) \bigr|^r \bigr]\nonumber\\[-8pt]\\[-8pt]
&&\quad\qquad\mbox{by the triangle inequality and
(\ref{convexlemma}).}\nonumber
\end{eqnarray}
We now bound the term $\E[| Z ( ( t - X^e)^+ ) - \mu
( t - X^e )^+|^r ]$ appearing in (\ref{verylarge2}),
which equals
%
\begin{eqnarray}\label{verylarge2b}
&&\int_0^{t-1} \E\bigl[\bigl| Z ( t - s ) - \mu( t
- s ) \bigr|^r \bigr] f_{X^e}(s) \,ds \nonumber\\[-8pt]\\[-8pt]
&&\qquad{}+ \int_{t-1}^{t}
\E\bigl[\bigl| Z ( t - s ) - \mu( t - s ) \bigr|^r \bigr]
f_{X^e}(s) \,ds.\nonumber
\end{eqnarray}
Let $C'_{X,r} \stackrel{\Delta}{=} \sup_{t \geq1} t^{- {r}/{2}
} \E[| Z(t) - \mu t |^r ]$. Theorem \ref
{extendedrenewalCLT200} implies that
the first summand of (\ref{verylarge2b}) is at most
\begin{eqnarray*}
\int_0^{t-1} \bigl(C'_{X,r}
(t-s)^{{r/2}} \bigr) f_{X^e}(s) \,ds &\leq& \int
_0^{t-1} \bigl(C'_{X,r}
t^{{r/2}} \bigr) f_{X^e}(s) \,ds
\\
&=& C'_{X,r} t^{{r/2}} \pr\bigl( X^e
\leq t-1 \bigr).
\end{eqnarray*}
Since $t - s \leq1$ implies $|Z(t-s) - \mu(t-s)|^r \leq|Z(1) + \mu
|^r$, the second summand of (\ref{verylarge2b}) is at most
$\E[| Z( 1 ) + \mu|^r ] \pr( X^e \in[t-1,t] )$.
Combining our bounds for (\ref{verylarge2b}), we find that (\ref
{verylarge2}) is at most
%
\begin{equation}
\label{verylarge2b2} 2^{r-1} \E\bigl[\bigl| Z( 1 ) + \mu\bigr|^r
\bigr] + 2^{r-1} C'_{X,r} t^{{r/2}}.
\end{equation}
We now bound (\ref{verylarge3}), which is at most
%
\begin{eqnarray}\label{verylarge33a}
&&
2^{r-1} \E\bigl[ \bigl| I \bigl( X^e \leq t \bigr) + \mu
\bigl(t - \bigl(t - X^e\bigr)^+ \bigr) \bigr|^r \bigr] \nonumber\\
&&\qquad\leq
2^{2r - 2} \bigl(1 + \E\bigl[ \bigl|\mu\bigl(t - \bigl(t - X^e
\bigr)^+ \bigr)\bigr|^r \bigr] \bigr) \qquad\mbox{by (\ref{convexlemma})}
\\
&&\qquad= 2^{2r - 2} \biggl(1 + \mu^r \biggl( \int
_0^t s^r f_{X^e}(s) \,ds +
\int_t^{\infty} t^r
f_{X^e}(s) \,ds \biggr) \biggr).\nonumber
\end{eqnarray}
It follows from (\ref{rezz}) and Markov's inequality that for all $s
\geq0$,
$f_{X^e}(s) = \mu\pr(X > s) \leq\mu\E[X^r] s^{-r}$. Thus the term
$\int_0^t s^r f_{X^e}(s) \,ds + \int_t^{\infty} t^r f_{X^e}(s) \,ds$
appearing in (\ref{verylarge33a}) is at most
%
\begin{eqnarray}\label{verylarge3b3}
&&
\int_0^t s^r \bigl( \mu\E
\bigl[X^r\bigr] s^{-r} \bigr) \,ds + t^r \int
_t^{\infty} \bigl( \mu\E\bigl[X^r\bigr]
s^{-r} \bigr) \,ds \nonumber\\
&&\qquad= \mu\E\bigl[X^r\bigr] \biggl( \int
_0^t ds + t^r \int
_t^{\infty} s^{-r} \,ds \biggr)
\nonumber\\[-8pt]\\[-8pt]
&&\qquad= \mu\E\bigl[X^r\bigr] \bigl( t + t^r
(r-1)^{-1} t^{1-r} \bigr)
\nonumber\\
&&\qquad= \mu\E\bigl[X^r\bigr] \bigl( 1 + (r-1)^{-1} \bigr) t.\nonumber
\end{eqnarray}
Using (\ref{verylarge2b2}) to bound (\ref{verylarge2}) and (\ref
{verylarge3b3}) to bound (\ref{verylarge33a}) and (\ref{verylarge3}),
we find that $\E[| Z^e_1(t) - \mu t |^r ]$ is at most
%
\begin{eqnarray}
\label{enditbig}
&&2^{r-1} \E\bigl[\bigl| Z( 1 ) + \mu\bigr|^r \bigr]
+ 2^{r-1} C'_{X,r} t^{{r/2}} +
2^{2r - 2} \nonumber\\[-8pt]\\[-8pt]
&&\qquad{}+ 2^{2r - 2} \mu^{r+1} \E\bigl[X^r
\bigr] \bigl( 1 + (r-1)^{-1} \bigr) t.\nonumber
\end{eqnarray}
Noting that $\E[| Z( 1 ) + \mu|^r ] < \infty$ since any
renewal process, evaluated at any fixed time, has finite moments of all
orders (see \cite{P65}, page 155), $\E[X^r] < \infty$ by assumption,
and $t \leq t^{{r/2}}$ since $t \geq1$ and $\frac{r}{2} \geq
1$, the lemma follows from~(\ref{enditbig}).
\end{pf}
Third, we prove a lemma which will be useful in handling the case $t
\leq2$. We note that in this auxiliary lemma, the upper bound is of
the form $(nt)^r$, as opposed to $(nt)^{{r/2}}$.
%
\begin{lemma}\label{binomial1}
Under the same definitions and assumptions as Lemma \ref{binomial2},
there exists $C_{X,r} < \infty$ (depending only on $X$ and $r$) s.t.
for all $n \geq1$, and $t \in[0, 2 ]$,
%
\begin{equation}
\label{binomial1b}\E\Biggl[ \Biggl|\sum_{i=1}^n
Z^e_i(t) - \mu n t \Biggr|^r \Biggr] \leq
C_{X,r} \bigl( 1 + ( n t )^r \bigr).
\end{equation}
\end{lemma}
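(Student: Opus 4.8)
The plan is to discard the centering at the outset and control the uncentered moment $\E\big[\big(\sum_{i=1}^n Z^e_i(t)\big)^r\big]$ directly. Since $\big|\sum_{i=1}^n Z^e_i(t) - \mu nt\big| \le \sum_{i=1}^n Z^e_i(t) + \mu nt$, inequality~(\ref{convexlemma}) gives
\[
\E\Big[\Big|\sum_{i=1}^n Z^e_i(t) - \mu nt\Big|^r\Big] \le 2^{r-1}\,\E\Big[\Big(\sum_{i=1}^n Z^e_i(t)\Big)^r\Big] + 2^{r-1}(\mu nt)^r ,
\]
so since $(\mu nt)^r = \mu^r(nt)^r$ it suffices to show $\E\big[\big(\sum_{i=1}^n Z^e_i(t)\big)^r\big] \le C_{X,r}(1 + (nt)^r)$ for $t \in [0,2]$.

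First I would establish that for every positive integer $m$ there is a finite constant $c_m$, depending only on $X$ and $m$, with $\E[(Z^e_1(t))^m] \le c_m\,t$ for all $t \in [0,2]$. As in the proof of Lemma~\ref{extendedrenewalCLT2}, couple $Z^e_1$ to an ordinary renewal process $Z$ with renewal distribution $X$, independent of $X^e$ (the first, equilibrium, renewal interval), via $Z^e_1(t) = I(X^e \le t) + Z\big((t - X^e)^+\big)$. Then $Z^e_1(t) = 0$ off the event $\{X^e \le t\}$, while on that event $Z^e_1(t) = 1 + Z(t - X^e) \le 1 + Z(2)$; conditioning on $X^e$ and using the independence of $Z$ yields $\E[(Z^e_1(t))^m] \le \E[(1 + Z(2))^m]\,\pr(X^e \le t)$. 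By~(\ref{rezz}), $\pr(X^e \le t) = \mu\int_0^t \pr(X > y)\,dy \le \mu t$, and $\E[(1 + Z(2))^m] < \infty$ since a renewal process evaluated at a fixed time has finite moments of all orders (see \cite{P.65} p.~155), so one may take $c_m := \mu\,\E[(1+Z(2))^m]$.

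Next, set $q := \lceil r \rceil \ge 2$ and $M := \sum_{i=1}^n Z^e_i(t)$, and expand $M^q = \sum_{(i_1,\ldots,i_q) \in \{1,\ldots,n\}^q}\prod_{\ell=1}^q Z^e_{i_\ell}(t)$, grouping the tuples by their pattern of coinciding indices. A tuple with exactly $j$ distinct index-values, occurring with multiplicities $\nu_1,\ldots,\nu_j \ge 1$ summing to $q$, contributes $\prod_{\ell=1}^j \E[(Z^e_1(t))^{\nu_\ell}] \le \big(\prod_\ell c_{\nu_\ell}\big)\,t^j$ by independence and the previous step, and there are at most $K_q\,n^j$ such tuples for a constant $K_q$ depending only on $q$. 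Summing over $j = 1,\ldots,q$ gives $\E[M^q] \le C\sum_{j=1}^q (nt)^j \le C'(1 + (nt)^q)$ with $C,C'$ depending only on $X$ and $r$. Since $r/q \le 1$, Jensen's inequality together with the subadditivity of the concave map $x \mapsto x^{r/q}$ then yields
\[
\E[M^r] = \E\big[(M^q)^{r/q}\big] \le \big(\E[M^q]\big)^{r/q} \le \big(C'(1 + (nt)^q)\big)^{r/q} \le C''(1 + (nt)^r),
\]
and combining with the reduction of the first paragraph proves~(\ref{binomial1b}).

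The step I expect to require the most care is the passage from the integer power $q$ back down to the (possibly non-integer) $r$: one cannot expand $M^r$ multinomially, so one expands the nearest integer power $\lceil r\rceil$ and contracts via concavity, and must check that this reproduces exactly the exponent $(nt)^r$ — which it does, because $x \mapsto x^{r/q}$ is subadditive. The reason to keep the full multinomial structure, rather than bound the centered sum by a Marcinkiewicz--Zygmund inequality (Lemma~\ref{csumbound}), is that the latter would only give $O\big(n^{r/2}\,\E[(Z^e_1(t))^r]\big) = O(n^{r/2}t)$, which is not $O((nt)^r)$ once $r > 4$; it is precisely the multinomial expansion that converts the linear-in-$t$ control on the individual moments into the correct $(nt)^j$ growth term by term.
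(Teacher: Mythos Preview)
Your proof is correct, and the overall architecture matches the paper's: drop the centering via $|\sum Z^e_i(t)-\mu nt|\le \sum Z^e_i(t)+\mu nt$, bound the uncentered $\lceil r\rceil$-th moment, and contract back to the $r$-th moment by concavity of $x\mapsto x^{r/\lceil r\rceil}$.

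Where the two diverge is in extracting the factor of $t$. The paper introduces an explicit Bernoulli layer: it couples $Z^e_i(t)\le B_i\big(1+Z_i(t)\big)$ with $B_i$ i.i.d.\ Bernoulli of parameter $p=\pr(R(X)\le t)\le \mu t$ and $Z_i$ i.i.d.\ ordinary renewal processes independent of the $B_i$; after a crude multinomial bound $\E\big[(\sum_{i=1}^k(1+Z_i(t)))^{\lceil r\rceil}\big]\le k^{\lceil r\rceil}\big(\E[(1+Z_1(2))^{\lceil r\rceil}]\big)^{\lceil r\rceil}$, this reduces the problem to the $\lceil r\rceil$-th moment of the binomial $\sum_i B_i$, for which the paper invokes a factorial-moment formula from \cite{R.37} to get the polynomial-in-$np$ bound. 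You instead absorb the Bernoulli step into the single estimate $\E[(Z^e_1(t))^m]\le c_m t$ and carry the full multinomial expansion grouped by coincidence pattern, so the $(nt)^j$ scaling appears directly, term by term, without any appeal to binomial-moment identities. Your route is a bit more self-contained; the paper's route makes the ``thinning'' structure (an event of probability $O(t)$ per coordinate) more visibly the source of the bound.
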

\begin{pf}
Note that the left-hand side of (\ref{binomial1b}) is at most
%
\begin{equation}\label{referenceeq1}
\E\Biggl[\Biggl|\sum_{i=1}^n
Z^e_i(t) + \mu n t \Biggr|^r \Biggr] \leq
2^{r - 1} \Biggl( \E\Biggl[ \Biggl(\sum_{i=1}^n
Z^e_i(t) \Biggr)^r \Biggr] + (\mu
nt)^r \Biggr) \qquad\mbox{by (\ref{convexlemma})}.\hspace*{-28pt}
\end{equation}
We now bound the term $\E[ (\sum_{i=1}^n Z^e_i(t)
)^r ]$ appearing in (\ref{referenceeq1}). Let $\lbrace Z_i(t)
\rbrace$ denote a countably infinite sequence of i.i.d. ordinary
renewal processes with renewal distribution $X$. Let us fix some $t \in
[0,2]$ and $n \geq1$, and let $\lbrace B_i \rbrace$ denote a
countably infinite sequence of i.i.d. Bernoulli r.v. s.t. $\pr(B_i = 1)
= p \stackrel{\Delta}{=} \pr( R(X) \leq t)$.
Note that we may construct $\lbrace Z^e_i(t) \rbrace$, $\lbrace Z_i(t)
\rbrace, \lbrace B_i \rbrace$ on the same probability space so that
w.p.1 $Z^e_i(t) \leq B_i (1 + Z_i(t) )$ for all $i \geq1$,
with $\lbrace Z_i(t) \rbrace, \lbrace B_i \rbrace$ mutually independent.
Letting $M \stackrel{\Delta}{=} \sum_{i=1}^n B_i$, it follows that
%
\begin{equation}
\label{appeq1} \E\Biggl[ \Biggl( \sum_{i=1}^n
Z^e_i(t) \Biggr)^{\lceil r \rceil} \Biggr] \leq\E\Biggl[
\Biggl( \sum_{i=1}^{ M } \bigl(1 +
Z_i(t) \bigr) \Biggr)^{\lceil r \rceil} \Biggr].
\end{equation}
Let $Z^+$ denote the set of nonnegative integers. Note that for any
positive integer $k$,
%
\begin{eqnarray}\label{appeq2}
\E\Biggl[ \Biggl( \sum_{i=1}^k \bigl( 1
+ Z_i(t) \bigr) \Biggr)^{\lceil r \rceil} \Biggr] &=& \E\Biggl[ \sum
_{ \stackrel{j_1,\ldots,j_k \in Z^+}{j_1 + \cdots+
j_k = \lceil r \rceil}} \prod_{i=1}^k
\bigl( 1 + Z_i(t) \bigr)^{j_i} \Biggr]
\nonumber\\
&=& \sum_{\stackrel{j_1,\ldots,j_k \in Z^+}{j_1 + \cdots+ j_k =
\lceil r \rceil}} \prod_{i=1}^k
\E\bigl[ \bigl( 1 + Z_i(t) \bigr)^{j_i} \bigr] \\
&&\mbox{since }
\bigl\lbrace Z_i(t) \bigr\rbrace\mbox{ are i.i.d. r.v.s.}\nonumber
\end{eqnarray}
For any setting of $\lbrace j_i, i=1,\ldots,k \rbrace$ in the
right-hand side of (\ref{appeq2}), at most $\lceil r \rceil$ of the
$j_i$ are strictly positive, and
each $j_i$ is at most $\lceil r \rceil$. It follows that
the term $\prod_{i=1}^k \E[ ( 1 + Z_i(t) )^{j_i}
]$ appearing in the right-hand side of (\ref{appeq2}) is at most
$ ( \E[ ( 1 + Z_1(t) )^{\lceil r \rceil} ]
)^{\lceil r \rceil}$, regardless of the particular setting of
$\lbrace j_i, i=1,\ldots,k \rbrace$.
As there are a total of $k^{\lceil r \rceil}$ distinct feasible
configurations for $\lbrace j_i, i=1,\ldots,k \rbrace$ in the
right-hand side of (\ref{appeq2}), combining the above we find that
for any nonnegative integer $k$,
%
\begin{eqnarray}\label{appeq3}
\E\Biggl[ \Biggl( \sum_{i=1}^k \bigl( 1
+ Z_i(t) \bigr) \Biggr)^{\lceil r \rceil} \Biggr] &\leq&
k^{\lceil r \rceil} \bigl( \E\bigl[ \bigl( 1 + Z_1(t)
\bigr)^{\lceil
r \rceil} \bigr] \bigr)^{\lceil r \rceil}
\nonumber\\
&\leq& k^{\lceil r \rceil} \bigl( \E\bigl[ \bigl( 1 + Z_1(2)
\bigr)^{\lceil r \rceil} \bigr] \bigr)^{\lceil r \rceil} \\
&&\mbox{since by
assumption } t
\leq2.\nonumber
\end{eqnarray}
Since any renewal process, evaluated at any fixed time, has finite
moments of all orders (see \cite{P65}, page 155), it follows that
$C^1_{X,\lceil r \rceil} \stackrel{\Delta}{=} ( \E[
( 1 + Z_1(2) )^{\lceil r \rceil} ] )^{\lceil r \rceil
}$ is a finite constant depending only on $X$ and $\lceil r \rceil$.
Combining (\ref{appeq1}) and (\ref{appeq3}) with the independence of
$M$ and $\lbrace Z_i(t) \rbrace$, it follows from a simple
conditioning argument that
%
\begin{equation}
\label{appeq4} \E\Biggl[ \Biggl( \sum_{i=1}^n
Z^e_i(t) \Biggr)^{\lceil r \rceil} \Biggr] \leq
C^1_{X,\lceil r \rceil} \E\bigl[M^{\lceil r \rceil} \bigr].
\end{equation}
We now bound the term $\E[M^{\lceil r \rceil} ]$ appearing
in (\ref{appeq4}). Noting that $M$ is a binomial distribution with
parameters $n$ and $p$, it follows from \cite{R37}, equation 3.3, that
there exist finite constants $C_{0,\lceil r \rceil},C_{1,\lceil r
\rceil},C_{2,\lceil r \rceil},\ldots, C_{\lceil r \rceil,\lceil r
\rceil}$, independent of $n$ and $p$, s.t. $\E[M^{\lceil r
\rceil} ] = \sum_{k=0}^{\lceil r \rceil} C_{k,\lceil r \rceil}
p^k \prod_{j=0}^{k-1} (n - j)$. Further noting that $\prod_{j=0}^{k-1}
(n - j) \leq n^k$ for all $k \geq0$, it follows that $\E
[M^{\lceil r \rceil} ] \leq\sum_{k=0}^{\lceil r \rceil}
|C_{k,\lceil r \rceil}| (n p)^k$.
Letting $C^2_{\lceil r \rceil} \stackrel{\Delta}{=} \max_{i=0,\ldots,\lceil r \rceil} | C_{i,\lceil r \rceil} |$, it follows
from (\ref{appeq4}) that
%
\begin{eqnarray}\label{appeq6}
\E\Biggl[ \Biggl( \sum_{i=1}^n
Z^e_i(t) \Biggr)^{\lceil r \rceil} \Biggr] &\leq&
C^1_{X, \lceil r \rceil} C^2_{\lceil r \rceil} \sum
_{i=0}^{\lceil r \rceil} (np)^i
\nonumber\\[-8pt]\\[-8pt]
&\leq& C^1_{X, \lceil r \rceil} C^2_{\lceil r \rceil} \bigl(
\lceil r \rceil+ 1\bigr) (1 + np )^{\lceil r \rceil}.\nonumber
\end{eqnarray}
Recall that for any nonnegative r.v. $Y$, one has that $\E[Y^r] \leq
\E[Y^{\lceil r \rceil}]^{{r/\lceil r \rceil}}$. Thus letting
$C^3_{X,r} \stackrel{\Delta}{=} ( C^1_{X, \lceil r \rceil}
C^2_{\lceil r \rceil}
(\lceil r \rceil+ 1) )^{ {r}/{\lceil r \rceil} }$,
it follows from (\ref{appeq6}) that
%
\begin{equation}
\label{appeq10} \E\Biggl[ \Biggl( \sum_{i=1}^n
Z^e_i(t) \Biggr)^r \Biggr] \leq
C^3_{X,r}(1 + np)^r.
\end{equation}
Furthermore, it follows from (\ref{rezz}) that $p = \mu\int_0^t \pr
(X > y) \,dy \leq\mu t$.
Combining with (\ref{appeq10}), we find that
%
\begin{equation}
\label{appeq11} \E\Biggl[ \Biggl( \sum_{i=1}^n
Z^e_i(t) \Biggr)^r \Biggr] \leq
C^3_{X,r} (1 + \mu n t)^r.
\end{equation}
Plugging (\ref{appeq11}) back into (\ref{referenceeq1}), it follows
that the left-hand side of (\ref{binomial1b}) is at most
\[
2^{r - 1} \bigl( C^3_{X,r} (1 + \mu n
t)^r + (\mu nt)^r \bigr) \leq2^{r}
\bigl(C^3_{X,r}+1\bigr) (1 + \mu n t)^r.
\]
Noting that $(1 + \mu n t)^r \leq2^r ( 1 + (\mu n t)^r )$ by
(\ref{convexlemma}), and
$1 + (\mu n t)^r \leq(1 + \mu)^r (1 + (nt)^r )$, completes
the proof.
\end{pf}
With the above auxiliary results in hand, we now complete the proof of
Lem\-ma~\ref{binomial2}.
\begin{pf*}{Proof of Lemma \ref{binomial2}}
We proceed by a case analysis. First, suppose $t \leq\frac{2}{n}$.
Then we also have $t \leq2$, and by Lemma \ref{binomial1} there
exists $C^1_{X,r} < \infty$ s.t. the left-hand side of (\ref{focus1})
is at most
\[
C^1_{X,r} \bigl( 1 + ( n t )^r \bigr) \leq
C^1_{X,r}\bigl( 1 + 2^r \bigr) \qquad\mbox{since } t
\leq\frac{2}{n} \mbox{ implies } n t \leq2.
\]
Letting $M_1 \stackrel{\Delta}{=} C^1_{X,r}( 1 + 2^r )$, it follows that
the left-hand side of (\ref{focus1}) is at most
$M_1 \leq M_1 (1 + (n t)^{{r/2}} )$, completing the
proof for the case $t \leq\frac{2}{n}$.

Second, suppose $t \in[ \frac{2}{n}, 2 ]$. Let
$n_1(t) \stackrel{\Delta}{=} \lfloor n t \rfloor$. Noting that $t
\geq\frac{2}{n}$ implies \mbox{$n_1(t) > 0$}, in this case we may define
$n_2(t) \stackrel{\Delta}{=} \lfloor\frac{n}{n_1(t)} \rfloor$.
Then the left-hand side of (\ref{focus1}) equals
%
\begin{eqnarray}
&& \E\Biggl[ \Biggl| \sum_{m = 1}^{n_1(t)} \sum
_{l=1}^{ n_2(t) } \bigl( Z^e_{(m-1)n_2(t) + l }
( t ) - \mu t \bigr) + \sum_{l = n_1(t) n_2(t)
+ 1}^n
\bigl( Z^e_l ( t ) - \mu t \bigr) \Biggr|^{r}
\Biggr]
\nonumber
\\
\label{maxme1}
&&\qquad\leq 2^{r-1} \E\Biggl[ \Biggl| \sum_{m = 1}^{n_1(t)}
\sum_{l=1}^{
n_2(t)} \bigl( Z^e_{ (m-1)n_2(t) + l }
( t ) - \mu t \bigr) \Biggr|^r \Biggr]
\\
\label{maxme2}
&&\qquad\quad{} + 2^{r-1} \E\Biggl[ \Biggl| \sum_{l = n_1(t) n_2(t) + 1}^n
\bigl( Z^e_l ( t ) - \mu t \bigr) \Biggr|^{r}
\Biggr] \qquad\mbox{by the tri. ineq. and (\ref{convexlemma}).}
\end{eqnarray}
We now bound (\ref{maxme1}). By Lemma \ref{csumbound}, there exists
$C_{r}<\infty$ s.t. (\ref{maxme1}) is at most
%
\begin{eqnarray}\label{abcd1}
&& 2^{r-1} C_{r} \bigl(n_1(t)
\bigr)^{{r/2}} \E\Biggl[ \Biggl| \sum_{l=1}^{ n_2(t) }
\bigl( Z^e_{l} ( t ) - \mu t \bigr) \Biggr|^r
\Biggr]
\nonumber\\[-8pt]\\[-8pt]
&&\qquad\leq 2^{r-1} C_{r} \bigl(n_1(t)
\bigr)^{{r/2}} \bigl(C^1_{X,r} \bigl( 1 + \bigl(
n_2(t) t \bigr)^r \bigr) \bigr) \qquad\mbox{by Lemma
\ref{binomial1}, since } t \leq2.\hspace*{-18pt}\nonumber
\end{eqnarray}
We now bound the term $t n_2(t)$ appearing in (\ref{abcd1}). In particular,
%
\begin{equation}
\label{abcd2} t n_2(t) = t \biggl\lfloor\frac{n}{ \lfloor n t \rfloor}
\biggr
\rfloor\leq\frac{nt}{nt - 1}.
\end{equation}
But since $t \geq\frac{2}{n}$ implies $nt \geq2$, and $g(z)
\stackrel{\Delta}{=}\frac{z}{z-1}$ is a decreasing function of $z$
on $(1,\infty)$, it follows from (\ref{abcd2}) that
\[
t n_2(t) \leq2.
\]
Since $n_1(t) \leq nt$, it thus follows from (\ref{abcd1}) that (\ref
{maxme1}) is at most
%
\begin{equation}
\label{binbin3a} 2^{r-1} C_{r} C^1_{X,r}
\bigl(1 + 2^r\bigr) (nt)^{{r/2}}.
\end{equation}
We now bound (\ref{maxme2}). Note that the sum $\sum_{l = n_1(t)
n_2(t) + 1}^n ( Z^e_l ( t ) - \mu t )$ appearing in (\ref
{maxme2}) is taken over $n - n_1(t) n_2(t)$ terms. Furthermore,
\begin{eqnarray*}
n - n_1(t) n_2(t) &=& n - n_1(t) \biggl
\lfloor\frac{n}{n_1(t)} \biggr\rfloor
\\
&\leq& n - n_1(t) \biggl( \frac{n}{n_1(t)} - 1 \biggr)
\\
&=& n_1(t).
\end{eqnarray*}
As $n_1(t) \leq nt$, it thus follows from Lemma \ref{csumbound} that
(\ref{maxme2}) is at most
%
\begin{eqnarray}\label{abcd6}
&&
2^{r-1} C_r (nt)^{{r/2}} \E\bigl[ \bigl|
Z^e_1(t) - \mu t \bigr|^r \bigr] \nonumber\\
&&\qquad\leq
2^{r-1} C_r (nt)^{{r/2}} \E\bigl[ \bigl(
Z^e_1(t) + \mu t \bigr)^r \bigr]
\\
&&\qquad\leq 2^{r-1} C_r (nt)^{{r/2}} \E\bigl[ \bigl(
Z^e_1(2) + 2 \mu\bigr)^r \bigr]
\qquad\mbox{since } t \leq2.\nonumber
\end{eqnarray}
Using (\ref{binbin3a}) to bound (\ref{maxme1}) and (\ref{abcd6}) to
bound (\ref{maxme2}) shows that the left-hand side of (\ref{focus1})
is at most
%
\begin{equation}
\label{binbin3} 2^{r-1} C_{r} C^1_{X,r}
\bigl(1 + 2^r\bigr) (nt)^{{r/2}} + 2^{r-1}
C_r (nt)^{{r/2}} \E\bigl[ \bigl( Z^e_1(2)
+ 2 \mu\bigr)^r \bigr].
\end{equation}
Let $M_2 \stackrel{\Delta}{=} 2^{r-1} C_{r} C^1_{X,r} (1 + 2^r) +
2^{r-1} C_r \E[ ( Z^e_1(2) + 2 \mu)^r ]$.
It follows from (\ref{binbin3}) that
the left-hand side of (\ref{focus1}) is at most $M_2 (nt)^{
{r/2}} \leq M_2 (1 + (nt)^{{r/2}} )$, completing the
proof for the case $t \in[ \frac{2}{n}, 2 ]$.

Finally, suppose $t \geq2$. In this case, it follows from
Lemma \ref{csumbound} that
the l.h.s. of (\ref{focus1}) is at most $C_r n^{{r/2}} \E
[|Z^e_1(t) - \mu t|^r ]$. Let $C^2_{X,r} \stackrel{\Delta}{=}
\sup_{t \geq2} t^{-{r/2}} \E[|Z^e_1(t) - \mu t|^r
]$. Then it follows from Lemma \ref{extendedrenewalCLT2} that
$C^2_{X,r} < \infty$, and the left-hand side of (\ref{focus1}) is at most
$C_r C^2_{X,r} (nt)^{{r/2}}$. Letting $M_3 \stackrel{\Delta}{=}
C_r C^2_{X,r}$, it follows that
the left-hand side of (\ref{focus1}) is at most $M_3 (nt)^{
{r/2}} \leq M_3 (1 + (nt)^{{r/2}} )$, completing the
proof for the case $t \geq2$.

As this treats all cases, we can complete the proof of the
lemma by letting $M_4 \stackrel{\Delta}{=} \max(
M_1,M_2,M_3 )$, and noting that for all $n \geq1$ and $t \geq0$,
the left-hand side of (\ref{focus1}) is at most $M_4 (1 +
(nt)^{{r/2}} )$.
\end{pf*}
\end{appendix}

\section*{Acknowledgments}

The authors would like to thank Ton Dieker, Johan van Leeuwaarden, Josh
Reed, Ward Whitt and Bert Zwart for their helpful discussions and
insights. The authors especially thank Ton Dieker for his insights
into the large deviations properties of suprema of Gaussian processes.



\printaddresses

\end{document}